\newcommand{\nc}{\newcommand}
\nc{\eg}{\mathfrak{e} } \nc{\fg}{\mathfrak{f} } \nc{\vg}{\mathfrak{v} } \nc{\wg}{\mathfrak{w} }
\nc{\zg}{\mathfrak{z} } \nc{\ngo}{\mathfrak{n} } \nc{\kg}{\mathfrak{k} }
\nc{\mg}{\mathfrak{m} } \nc{\bg}{\mathfrak{b} } \nc{\ggo}{\mathfrak{g} }
\nc{\ggob}{\overline{\mathfrak{g}} } \nc{\sog}{\mathfrak{so} }
\nc{\sug}{\mathfrak{su} } \nc{\spg}{\mathfrak{sp} } \nc{\slg}{\mathfrak{sl} }
\nc{\glg}{\mathfrak{gl} } \nc{\cg}{\mathfrak{c} } \nc{\rg}{\mathfrak{r} }
\nc{\hg}{\mathfrak{h} } \nc{\tg}{\mathfrak{t} } \nc{\ug}{\mathfrak{u} }
\nc{\dg}{\mathfrak{d} } \nc{\ag}{\mathfrak{a} } \nc{\pg}{\mathfrak{p} }
\nc{\sg}{\mathfrak{s} } \nc{\affg}{\mathfrak{aff} }
\nc{\pca}{\mathcal{P}} \nc{\nca}{\mathcal{N}} \nc{\lca}{\mathcal{L}}
\nc{\oca}{\mathcal{O}} \nc{\mca}{\mathcal{M}} \nc{\tca}{\mathcal{T}}
\nc{\aca}{\mathcal{A}} \nc{\cca}{\mathcal{C}} \nc{\gca}{\mathcal{G}}
\nc{\sca}{\mathcal{S}} \nc{\hca}{\mathcal{H}} \nc{\bca}{\mathcal{B}}
\nc{\dca}{\mathcal{D}} \nc{\val}{\operatorname{val}}
\nc{\vp}{\varphi} \nc{\ddt}{\frac{d}{dt}} \nc{\dds}{\frac{d}{ds}}
\nc{\dpar}{\frac{\partial}{\partial t}} \nc{\im}{\mathtt{i}}
\nc{\SO}{\mathrm{SO}} \nc{\Spe}{\mathrm{Sp}} \nc{\Sl}{\mathrm{SL}}
\nc{\SU}{\mathrm{SU}} \nc{\Or}{\mathrm{O}} \nc{\U}{\mathrm{U}} \nc{\Gl}{\mathrm{GL}}
\nc{\Se}{\mathrm{S}} \nc{\Cl}{\mathrm{Cl}} \nc{\Spein}{\mathrm{Spin}}
\nc{\Pin}{\mathrm{Pin}} \nc{\G}{\mathrm{GL}_n(\RR)} \nc{\g}{\mathfrak{gl}_n(\RR)}
\nc{\RR}{{\Bbb R}} \nc{\HH}{{\Bbb H}} \nc{\CC}{{\Bbb C}} \nc{\ZZ}{{\Bbb Z}}
\nc{\FF}{{\Bbb F}} \nc{\NN}{{\Bbb N}} \nc{\QQ}{{\Bbb Q}} \nc{\PP}{{\Bbb P}}
\nc{\vs}{\vspace{.2cm}} \nc{\vsp}{\vspace{1cm}} \nc{\ip}{\langle\cdot,\cdot\rangle}
\nc{\ipp}{(\cdot,\cdot)} \nc{\la}{\langle} \nc{\ra}{\rangle} \nc{\unm}{\tfrac{1}{2}}
\nc{\unc}{\tfrac{1}{4}} \nc{\und}{\tfrac{1}{16}} \nc{\no}{\vs\noindent}
\nc{\lamkn}{\Lambda^2(\RR^{q+n})^*\otimes\RR^{q+n}} \nc{\lamn}{\Lambda^2(\RR^n)^*\otimes\RR^n} \nc{\lamp}{\Lambda^2\pg^*\otimes\pg}
\nc{\lamg}{\Lambda^2\ggo^*\otimes\ggo} \nc{\lamngo}{\Lambda^2\ngo^*\otimes\ngo}
\nc{\tangz}{{\rm T}^{\rm Zar}} \nc{\mum}{/\!\!/} \nc{\kir}{/\!\!/\!\!/}
\nc{\Ri}{\tfrac{4\Ric_{\mu}}{||\mu||^2}} \nc{\ds}{\displaystyle}
\nc{\ben}{\begin{enumerate}} \nc{\een}{\end{enumerate}} \nc{\f}{\frac}
\nc{\lb}{[\cdot,\cdot]} \nc{\isn}{\tfrac{1}{||v||^2}}
\nc{\gkp}{(\ggo=\kg\oplus\pg,\ip)} \nc{\ukh}{(\ug=\kg\oplus\hg,\ip)}
\nc{\Hess}{\operatorname{Hess}} \nc{\ad}{\operatorname{ad}}
\nc{\Ad}{\operatorname{Ad}} \nc{\rank}{\operatorname{rank}}
\nc{\Irr}{\operatorname{Irr}} \nc{\End}{\operatorname{End}}
\nc{\Aut}{\operatorname{Aut}} \nc{\Inn}{\operatorname{Inn}}
\nc{\Der}{\operatorname{Der}} \nc{\Ker}{\operatorname{Ker}}
\nc{\Iso}{\operatorname{I}} \nc{\Diff}{\operatorname{Diff}}
\nc{\Lie}{\operatorname{Lie}} \nc{\tr}{\operatorname{tr}} \nc{\dif}{\operatorname{d}}
\nc{\sen}{\operatorname{sen}} \nc{\modu}{\operatorname{mod}}
\nc{\Riem}{\operatorname{Rm}} \nc{\Ricci}{\operatorname{Ric}}
\nc{\sym}{\operatorname{sym}} \nc{\symac}{\operatorname{sym^{ac}}}
\nc{\symc}{\operatorname{sym^{c}}} \nc{\scalar}{\operatorname{R}}
\nc{\grad}{\operatorname{grad}} \nc{\ricci}{\operatorname{Rc}}
\nc{\nr}{\operatorname{nr}} \nc{\riccic}{\operatorname{ric^{c}}}
\nc{\riccig}{\operatorname{ric^{\gamma}}} \nc{\Rin}{\operatorname{M}}
\nc{\Le}{\operatorname{L}}
\nc{\level}{\operatorname{level}} \nc{\rad}{\operatorname{rad}}
\nc{\abel}{\operatorname{ab}} \nc{\CH}{\operatorname{CH}}
\nc{\mcc}{\operatorname{mcc}} \nc{\Adj}{\operatorname{Adj}}
\nc{\Order}{\operatorname{O}} \nc{\mm}{\operatorname{M}}
\nc{\inj}{\operatorname{inj}} \nc{\proy}{\operatorname{pr}}
\nc{\vol}{\operatorname{vol}}
\theoremstyle{plain}
\newtheorem{theorem}{Theorem}[section]
\newtheorem{proposition}[theorem]{Proposition}
\newtheorem{lemma}[theorem]{Lemma}
\theoremstyle{definition}
\newtheorem{definition}[theorem]{Definition}
\theoremstyle{remark}
\newtheorem{remark}[theorem]{Remark}
\newtheorem{example}[theorem]{Example}
\title{Ricci flow of homogeneous manifolds}
\author{Jorge Lauret}
\address{FaMAF and CIEM, Universidad Nacional de C\'ordoba, C\'ordoba, Argentina}
\email{lauret@famaf.unc.edu.ar}
\thanks{This research was partially supported by grants from CONICET (Argentina)
and SeCyT (Universidad Nacional de C\'ordoba)}
\begin{document}

\maketitle

\begin{abstract}
We present in this paper a general approach to study the Ricci flow on homogeneous manifolds.  Our main tool is a dynamical system defined on a subset $\hca_{q,n}$ of the variety of $(q+n)$-dimensional Lie algebras, parameterizing the space of all simply connected homogeneous spaces of dimension $n$ with a $q$-dimensional isotropy, which is proved to be equivalent in a precise sense to the Ricci flow.  The approach is useful to better visualize the possible (nonflat) pointed limits of Ricci flow solutions, under diverse rescalings, as well as to determine the type of the possible singularities.  Ancient solutions arise naturally from the qualitative analysis of the evolution equation.

We develop two examples in detail: a $2$-parameter subspace of $\hca_{1,3}$ reaching most of $3$-dimensional geometries, and a $2$-parameter family in $\hca_{0,n}$ of left-invariant metrics on $n$-dimensional compact and non-compact semisimple Lie groups.
\end{abstract}

\tableofcontents

\section{Introduction}

We present in this paper a general approach to study the Ricci flow on homogeneous manifolds.  Our main tool is a dynamical system defined on a space $\hca_{q,n}$ parameterizing the space of all simply connected homogeneous spaces of dimension $n$ with a $q$-dimensional isotropy, which is proved to be equivalent in a precise sense to the Ricci flow.  Among some other advantages, this setting helps us to better visualize the possible (nonflat) pointed limits of Ricci flow solutions, under diverse normalizations, as well as to find ancient solutions and determine the type of the singularities.  $\hca_{q,n}$ is a subset of the variety of $(q+n)$-dimensional Lie algebras.

Given a homogeneous Riemannian manifold $(M,g_0)$, consider the unique homogeneous Ricci flow solution $g(t)$ starting at $g_0$, that is,
\begin{equation}\label{RFint}
\dpar g(t)=-2\ricci(g(t)),\qquad g(0)=g_0.
\end{equation}
As the Ricci flow preserves isometries, any transitive Lie group $G\subset\Iso(M,g_0)$ provides, for all $t$, a presentation as a homogeneous space
$$
(M,g(t))=(G/K,g_{\ip_t}), \qquad \mbox{with the same reductive decomposition} \quad \ggo=\kg\oplus\pg.
$$
We denote by $g_{\ip_t}$ the $G$-invariant metric on $G/K$ determined by its value at the origin $\ip_t=g_{\ip_t}(o)$, which is an $\Ad(K)$-invariant inner product on $\pg=T_oG/K$.  The family $\ip_t$ is the solution to the ODE
\begin{equation}\label{RFipint}
\ddt \ip_t=-2\ricci(\ip_t), \qquad\mbox{where}\quad \ricci(\ip_t)=\ricci(g(t))(o),
\end{equation}
and thus the maximal interval of time where $g(t)$ exists is of the form $(T_-,T_+)$ for some $-\infty\leq T_-<0<T_+\leq\infty$.

This approach seems not to be quite the appropriate one to study certain questions on the Ricci flow of homogeneous manifolds, as it confines ourselves to the universe of $G$-invariant metrics on $M$.  For instance, the possible limits of normalized Ricci flow solutions $c_tg(t)$, as $t\to T_{\pm}$, that we may directly get for different rescalings $c_t>0$ must all necessarily be $G$-invariant metrics of the form $g_{\ip_{\pm}}$ for some $\ip_{\pm}=\lim c_t\ip_t$.  The reason for a family $c_t\ip_t$ to diverge as $t\to T_{\pm}$ can therefore be that it is actually converging to a $G_\pm$-invariant metric on some other homogeneous space $G_\pm/K_\pm$, which might be even non-homeomorphic to $M$.  The notion of convergence we mainly use in this paper is the so called pointed or Cheeger-Gromov convergence.

In order to avoid this restriction, we propose the following point of view, which can be roughly described as evolving the algebraic side of a homogeneous space rather than its metric.

Given $(M,g_0)=(G/K,g_{\ip_0})$ as above, we consider for a family $\lb_t\in\lamg$ of Lie brackets the evolution equation, called the {\it bracket flow} (see Section \ref{lbflow}), defined by
\begin{equation}\label{BFint}
\ddt\lb_t=-\pi\left(\left[\begin{smallmatrix} 0&0\\ 0&\Ricci_t
\end{smallmatrix}\right]\right)\lb_t, \qquad \lb_0=\;\mbox{Lie bracket of}\; G,
\end{equation}
where $\pi$ is the canonical representation of $\glg(\ggo)$ on $\lamg$ given by
\begin{equation}\label{actiongint}
\pi(A)\lb=A\lb-[A\cdot,\cdot]-[\cdot,A\cdot] =\dds|_0 e^{sA}[e^{-sA}\cdot,e^{-sA}\cdot],
\end{equation}
and $\Ricci_t$ denotes the Ricci operator of a homogeneous space $(G_t/K_t,g_{\lb_t})$ which is determined by $\lb_t$ (the space $\hca_{q,n}$ mentioned above turns out to be bracket flow invariant).  Here $G_t$ is the simply connected Lie group with Lie algebra $(\ggo,\lb_t)$, the reductive decomposition is always  $\ggo=\kg\oplus\pg$ and $g_{\lb_t}$ is the $G_t$-invariant metric on $G_t/K_t$ such that $g_{\lb_t}(o)=\ip_0$ (see Section \ref{varhs}).

We first prove (see Theorem \ref{eqfl}) that there exists a time-dependent family of diffeomorphisms $\vp(t):M=G/K\longrightarrow G_t/K_t$ such that
$$
g(t)=\vp(t)^*g_{\lb_t}, \qquad\forall t\in (T_-,T_+).
$$
Moreover, $\vp(t)$ can be chosen as the equivariant diffeomorphism determined by a Lie group isomorphism between $G$ and $G_t$ with derivative $\tilde{h}(t):=\left[\begin{smallmatrix} I&0\\ 0&h(t) \end{smallmatrix}\right]:\ggo\longrightarrow\ggo$ (i.e. $d\vp(t)|_o=\tilde{h}(t)|_{\pg}=h(t)$), and the curve $h(t)\in\Gl(\pg)$ satisfies
$$
\begin{array}{c}
\ddt h=-h\Ricci(\ip_t), \qquad \ddt h=-\Ricci_t h, \qquad h(0)=I, \\ \\
 \ip_t=\la h\cdot,h\cdot\ra_0, \qquad \lb_t =\tilde{h}[\tilde{h}^{-1}\cdot,\tilde{h}^{-1}\cdot]_0.
\end{array}
$$
In particular, $\lb_t$ is isomorphic to $\lb_0$ for all $t$ and both the Lie bracket on $\kg$ and the isotropy representation remain constant in time (i.e. $\lb_t|_{\kg\times\ggo}\equiv\lb_0$), so that only $\lb_t|_{\pg\times\pg}$ is really evolving.

The Ricci flow $g(t)$ therefore differs from the bracket flow $g_{\lb_t}$ only by pullback by time-dependent diffeomorphisms, and hence the behavior
of the curvature is exactly the same along any of them.  As the maximal interval of time where a solution exists also coincides for both flows, it follows that the types of the singularities $T_{\pm}$ of the Ricci flow solution $g(t)$ can be determined by analyzing the bracket flow solution $\lb_t$.  Furthermore, one can for instance find ancient solutions to the Ricci flow (i.e. $T_-=-\infty$) by just showing that $\lb_t$ (or its derivative) remains bounded in the vector space $\lamg$ backward in time.

The right rescaling here is given by
$$
c\cdot\lb|_{\kg\times\ggo}=\lb, \qquad c\cdot\lb|_{\pg\times\pg}=c^2\lb^{\kg}+c\lb^{\pg},
$$
where the superscripts denote the components on $\kg$ and $\pg$ of $\lb|_{\pg\times\pg}$, respectively (see Section \ref{varhs}).  Concerning the limit behavior, if $c_t\cdot\lb_t\to\lb_{\pm}$, as $t\to T_\pm$, then we can apply the results given in \cite{spacehm} translating this convergence of Lie brackets into more geometric notions of convergence, including the pointed topology  (see Section \ref{convergence}).  The new ingredient now is that the limit Lie bracket $\lb_{\pm}$ may be non-isomorphic to $\lb_t$, providing an explicit limit $(G_{\pm}/K_{\pm},g_{\lb_{\pm}})$ with a different algebraic presentation and which can often be non-diffeomorphic and even non-homeomorphic to $M$.

This approach has been applied to the study of the Ricci flow on nilmanifolds in \cite{nilricciflow}.  We also refer to \cite{Gzh,Pyn} for other studies of the Ricci flow on nilmanifolds via the bracket flow (\ref{BFint}).  In the case of $3$-dimensional unimodular Lie groups, a global picture of the qualitative behavior of the Ricci flow is given in \cite{GlcPyn} by using the approach proposed in this paper: to vary Lie brackets instead of inner products.  In \cite{Arr}, the bracket flow is applied to study the Ricci flow for homogeneous manifolds of dimension $4$, and in \cite{homRS} to study homogeneous Ricci solitons.  On the other hand, since the pioneer work \cite{IsnJck}, the Ricci flow for homogeneous manifolds has extensively been studied from the standard point of view in dimension $3$ (see \cite{KnpMcl,Glc,Ltt}, and for the backward case see \cite{CaoSlf}) and dimension $4$ (see \cite{IsnJckLu,Ltt}), on Lie groups (see \cite{nicebasis}), on flag manifolds (see \cite{GrmMrt, AnsChr}) and in particular on the $12$-dimensional example $\Spe(3)/\Spe(1)\times\Spe(1)\times\Spe(1)$ (see \cite{BhmWlk}), where the Ricci flow evolves certain positively curved metrics into metrics with mixed Ricci curvature.

We give, in Propositions \ref{eqs} and \ref{eqsrn}, the evolution equations the different parts of the Ricci curvature obey along the Ricci flow and its possible normalizations, which provide a very useful tool in the qualitative analysis of the bracket flow solutions.

In order to illustrate all the aspects of the approach described above, we develop two examples in detail.  In Section \ref{exdim3}, we consider a $2$-parameter subspace of $\hca_{1,3}$, which is invariant by the bracket flow and reaches the $3$-dimensional geometries $\RR^3$, Nil, $S^2\times\RR$, $H^2\times\RR$ and some homogeneous spaces isometric to left-invariant invariant metrics on $S^3$ and $\widetilde{\Sl}_2(\RR)$.  A particularly interesting feature in this example is the pointed convergence of the solution on $\widetilde{\Sl}_2(\RR)$ normalized by scalar curvature $R\equiv -\unm$ towards $H^2\times\RR$, which can never appear on $\hca_{0,3}$, the space of all $3$-dimensional Lie algebras, as $H^2\times\RR$ is not a unimodular Lie group and this is a closed condition on $\hca_{0,3}$.  Secondly, a $2$-parameter family in $\hca_{0,n}$ of left-invariant metrics on $n$-dimensional semisimple Lie groups is studied in Section \ref{exss}.  As a result we find non-Einstein ancient Ricci flow solutions on each compact simple Lie group different from $\Spe(2k+1)$, and show that the Einstein metric which is not bi-invariant represents an unstable point in both the volume and scalar curvature normalized bracket flow systems.

\vs \noindent {\it Acknowledgements.} The author is very grateful to Ramiro Lafuente for his guidance in making the figures in this paper, and to him and Mauro Subils for very helpful comments.

\section{The space of homogeneous manifolds}\label{hm}

Our aim in this section is to describe a framework developed in \cite{spacehm} which allows us to work on the `space of homogeneous manifolds', by parameterizing  the set of all homogeneous spaces of dimension $n$ and isotropy dimension $q$ by a subset $\hca_{q,n}$ of the variety of $(q+n)$-dimensional Lie algebras.

Let $(M,g)$ be a connected {\it homogeneous} Riemannian manifold, i.e. its isometry group $\Iso(M,g)$ acts transitively on $M$.  Then each closed Lie subgroup $G\subset\Iso(M,g)$ acting transitively on $M$ (which can be assumed to be connected) gives rise to a
presentation of $(M,g)$ as a homogeneous space $(G/K,g_{\ip})$, where
$K$ is the isotropy subgroup of $G$ at some point $o\in M$.  Since $K$ turns to be compact, there always
exists an $\Ad(K)$-invariant direct sum decomposition $\ggo=\kg\oplus\pg$, where $\ggo$ and $\kg$ are respectively the Lie algebras of $G$ and $K$.  This is called a {\it reductive decomposition} and is in general non-unique.  Thus $\pg$ can be naturally identified with the tangent space $\pg\equiv T_oM=T_oG/K$, by taking the value at the origin $o=eK$ of the Killing vector fields
corresponding to elements of $\pg$ (i.e. $X_o=\ddt|_0\exp{tX}(o)$).  We denote by $g_{\ip}$ the $G$-invariant metric on $G/K$ determined by $\ip:=g(p)$, the $\Ad(K)$-invariant inner product on $\pg$ defined by $g$.

Any homogeneous space $G/K$ will be assumed to be {\it almost-effective}, i.e. $K$ contains no non-discrete normal subgroup of
$G$, or equivalently, the normal subgroup $\{ g\in G:ghK=hK, \;\forall h\in G\}$ is discrete.

\subsection{Varying Lie brackets viewpoint}\label{varhs}
Let us fix for the rest of the paper a $(q+n)$-dimensional real vector space $\ggo$ together with a direct sum decomposition
\begin{equation}\label{fixdec}
\ggo=\kg\oplus\pg, \qquad \dim{\kg}=q, \qquad \dim{\pg=n},
\end{equation}
and an inner product $\ip$ on $\pg$.  We consider the space of all skew-symmetric algebras (or brackets) of dimension $q+n$, which is
parameterized by the vector space
$$
V_{q+n}:=\{\mu:\ggo\times\ggo\longrightarrow\ggo : \mu\; \mbox{bilinear and
skew-symmetric}\},
$$
and we set
$$
V_{n}:=\{\mu:\pg\times\pg\longrightarrow\pg : \mu\; \mbox{bilinear and
skew-symmetric}\}.
$$
For any $X\in\ggo$, we denote left multiplication (or adjoint action) as usual by $\ad_{\mu}{X}(Y)=\mu(X,Y)$ for all $Y\in\ggo$.

\begin{definition}\label{hqn} The subset $\hca_{q,n}\subset V_{q+n}$ consists of the brackets $\mu\in V_{q+n}$ such that:
\begin{itemize}
\item [(h1)]  $\mu$ satisfies the Jacobi condition, $\mu(\kg,\kg)\subset\kg$ and $\mu(\kg,\pg)\subset\pg$.

\item[(h2)] If $G_\mu$ denotes the simply connected Lie group with Lie algebra $(\ggo,\mu)$ and $K_\mu$ is the connected Lie subgroup of $G_\mu$ with Lie algebra $\kg$, then $K_\mu$ is closed in $G_\mu$.

\item[(h3)] $\ip$ is $\ad_{\mu}{\kg}$-invariant (i.e. $(\ad_{\mu}{Z}|_{\pg})^t=-\ad_{\mu}{Z}|_{\pg}$ for all $Z\in\kg$).

\item[(h4)] $\{ Z\in\kg:\mu(Z,\pg)=0\}=0$.
\end{itemize}
\end{definition}

Each $\mu\in\hca_{q,n}$ defines a unique simply connected homogeneous space,
\begin{equation}\label{hsmu}
\mu\in\hca_{q,n}\rightsquigarrow\left(G_{\mu}/K_{\mu},g_\mu\right),
\end{equation}
with $\Ad(K_{\mu})$-invariant decomposition $\ggo=\kg\oplus\pg$ and $g_\mu(o_\mu)=\ip$, where $o_\mu:=e_\mu K_\mu$ is the origin of $G_\mu/K_\mu$ and $e_\mu\in G_\mu$ is the identity element.  It is almost-effective by (h4), and it follows from (h3) that $\ip$ is $\Ad(K_{\mu})$-invariant as $K_{\mu}$ is connected.  We note that any $n$-dimensional simply connected homogeneous Riemannian space $(G/K,g_{\ip})$ which is almost-effective can be identified with some $\mu\in\hca_{q,n}$, where $q=\dim{K}$.  Indeed, $G$ can be assumed to be simply connected without losing almost-effectiveness, and we can identify any $\Ad(K)$-invariant decomposition with $\ggo=\kg\oplus\pg$. In this way, $\mu$ will be precisely the Lie bracket of $\ggo$.

We also fix from now on a basis $\{ Z_1,\dots,Z_q\}$ of $\kg$ and an orthonormal basis $\{ X_1,\dots,X_n\}$ of $\pg$ (see (\ref{fixdec})) and use them to identify the groups $\Gl(\ggo)$, $\Gl(\kg)$, $\Gl(\pg)$ and $\Or(\pg,\ip)$, with $\Gl_{q+n}(\RR)$, $\Gl_q(\RR)$, $\Gl_n(\RR)$ and $\Or(n)$, respectively.

There is a natural linear action of $\Gl_{q+n}(\RR)$ on $V_{q+n}$ given by
\begin{equation}\label{action}
h\cdot\mu(X,Y)=h\mu(h^{-1}X,h^{-1}Y), \qquad X,Y\in\ggo, \quad h\in\Gl_{q+n}(\RR),\quad \mu\in V_{q+n}.
\end{equation}
The following result gives a useful geometric meaning to the action of some subgroups or subsets of $\Gl_{q+n}(\RR)$ on $\hca_{q,n}$.

\begin{proposition}\label{const}\cite{spacehm}
If $\mu\in\hca_{q,n}$, then $h\cdot\mu\in\hca_{q,n}$ for any $h\in\Gl_{q+n}(\RR)$ of the form
\begin{equation}\label{formh}
h:=\left[\begin{smallmatrix} h_q&0\\ 0&h_n
\end{smallmatrix}\right]\in\Gl_{q+n}(\RR), \quad h_q\in\Gl_q(\RR), \quad h_n\in\Gl_n(\RR),
\end{equation}
such that
\begin{equation}\label{adkh}
[h_n^th_n,\ad_{\mu}{\kg}|_{\pg}]=0.
\end{equation}
In that case,
\begin{itemize}
\item[(i)] $G_{h\cdot\mu}/K_{h\cdot\mu}$ and $G_{\mu}/K_{\mu}$ are equivariantly diffeomorphic.

\item[(ii)] $\left(G_{h\cdot\mu}/K_{h\cdot\mu},g_{h\cdot\mu}\right)$ is equivariantly isometric to $\left(G_{\mu}/K_{\mu},g_{\la h_n\cdot,h_n\cdot\ra}\right)$.
\end{itemize}
\end{proposition}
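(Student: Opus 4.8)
The plan is to verify directly that $h\cdot\mu$ satisfies (h1)--(h4), and then to build the equivariant diffeomorphism of (i) from the Lie group isomorphism integrating $h$, from which (ii) will follow by comparing two invariant metrics at the origin. Throughout, the key structural fact is that the action (\ref{action}) makes $h$ a Lie algebra isomorphism $(\ggo,\mu)\to(\ggo,h\cdot\mu)$, and that $h$ is block-diagonal with respect to (\ref{fixdec}), hence preserves $\kg$ and $\pg$.

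For (h1): the Jacobi identity is inherited through the isomorphism $h$, and $h\cdot\mu(\kg,\kg)=h\,\mu(\kg,\kg)\subset\kg$, $h\cdot\mu(\kg,\pg)\subset\pg$ because $h$ preserves the decomposition. A direct computation gives, for $Z\in\kg$,
$$\ad_{h\cdot\mu}Z|_{\pg}=h_n\circ\ad_{\mu}(h_q^{-1}Z)|_{\pg}\circ h_n^{-1},$$
so (h4) for $h\cdot\mu$ is immediate from the injectivity of $h_q$, $h_n$ and (h4) for $\mu$. For (h3), since $\{\ad_{\mu}(h_q^{-1}Z)|_{\pg}:Z\in\kg\}=\ad_{\mu}\kg|_{\pg}$, the requirement is that $h_n A h_n^{-1}$ be $\ip$-skew-symmetric for every $\ip$-skew-symmetric $A\in\ad_{\mu}\kg|_{\pg}$; manipulating transposes, $(h_nAh_n^{-1})^t=-h_nAh_n^{-1}$ is equivalent to $A\,h_n^th_n=h_n^th_n\,A$, i.e. to the hypothesis $[h_n^th_n,\ad_{\mu}\kg|_{\pg}]=0$ of (\ref{adkh}). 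The same computation shows that $\la h_n\cdot,h_n\cdot\ra$ is $\ad_{\mu}\kg|_{\pg}$-skew, hence $\Ad(K_\mu)$-invariant, so the metric $g_{\la h_n\cdot,h_n\cdot\ra}$ on $G_\mu/K_\mu$ appearing in (ii) is well defined.

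For (h2) and (i): as $G_\mu$ and $G_{h\cdot\mu}$ are simply connected, $h$ integrates to a Lie group isomorphism $\tilde h\colon G_\mu\to G_{h\cdot\mu}$ with $d\tilde h|_{e_\mu}=h$ and $\tilde h\circ\exp_\mu=\exp_{h\cdot\mu}\circ h$. Since $h(\kg)=\kg$, the image $\tilde h(K_\mu)$ is a connected subgroup of $G_{h\cdot\mu}$ with Lie algebra $\kg$, hence equals $K_{h\cdot\mu}$; being the homeomorphic image of the closed subgroup $K_\mu$, it is closed, which is (h2). Thus $\mu\mapsto h\cdot\mu$ stays in $\hca_{q,n}$, and $\tilde h$ descends to a diffeomorphism $\bar h\colon G_\mu/K_\mu\to G_{h\cdot\mu}/K_{h\cdot\mu}$, $gK_\mu\mapsto\tilde h(g)K_{h\cdot\mu}$, which is $\tilde h$-equivariant; this proves (i). Finally, for (ii): using $\tilde h\circ\exp_\mu=\exp_{h\cdot\mu}\circ h$ and the identification of the tangent spaces at the origins with $\pg$ via Killing fields, one computes $d\bar h|_{o_\mu}=h_n$ as an endomorphism of $\pg$, so $(\bar h^*g_{h\cdot\mu})(o_\mu)=\la h_n\cdot,h_n\cdot\ra$ because $g_{h\cdot\mu}(o_{h\cdot\mu})=\ip$. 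Since $\bar h$ is equivariant, $\bar h^*g_{h\cdot\mu}$ is a $G_\mu$-invariant metric, and such a metric is determined by its value at the origin; hence $\bar h^*g_{h\cdot\mu}=g_{\la h_n\cdot,h_n\cdot\ra}$.

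The step that requires the most care is (h3): identifying precisely which operators must be conjugated and checking that "$h_n(\cdot)h_n^{-1}$ preserves $\ip$-skew-symmetry on $\ad_\mu\kg|_\pg$" is exactly the commutator condition (\ref{adkh}) --- and noticing that this one condition simultaneously guarantees both (h3) for $h\cdot\mu$ and the well-definedness of the rescaled metric in (ii). The remaining points are bookkeeping with the block structure of $h$ and the standard integration of Lie algebra isomorphisms between simply connected groups.
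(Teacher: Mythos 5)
Your proof is correct; note that the paper itself gives no proof of this proposition but simply cites \cite{spacehm}, and your argument (verifying (h1)--(h4) directly, with the commutator condition (\ref{adkh}) emerging as exactly the $\ip$-skew-symmetry requirement for $\ad_{h\cdot\mu}\kg|_{\pg}$, then integrating $h$ to a group isomorphism between the simply connected groups and computing $d\bar h|_{o_\mu}=h_n$) is the standard route taken in that reference. No gaps.
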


It follows from part (ii) that the metrics $g_\mu$ and $g_{h\cdot\mu}$ have the same volume element if $\det{h_n}=1$, and that the subset
$$
\left\{ h\cdot\mu:h_q=I,\, h_n\,\mbox{satisfies (\ref{adkh})}\right\}\subset\hca_{q,n},
$$
parameterizes the set of all $G_\mu$-invariant metrics on $G_\mu/K_\mu$.  Also, by setting $h_q=I$, $h_n=\frac{1}{c}I$, $c\ne 0$, we get the rescaled $G_\mu$-invariant metric $\frac{1}{c^2}g_{\ip}$ on $G\mu/K_\mu$, which is isometric according to (ii) and (\ref{action}) to the element of $\hca_{q,n}$ denoted by $c\cdot\mu$ and defined by
\begin{equation}\label{scmu}
c\cdot\mu|_{\kg\times\kg}=\mu, \qquad c\cdot\mu|_{\kg\times\pg}=\mu, \qquad c\cdot\mu|_{\pg\times\pg}=c^2\mu_{\kg}+c\mu_{\pg},
\end{equation}
where the subscripts denote the $\kg$ and $\pg$-components of $\mu|_{\pg\times\pg}$ given by
\begin{equation}\label{decmu}
\mu(X,Y)=\mu_{\kg}(X,Y)+\mu_{\pg}(X,Y), \qquad \mu_{\kg}(X,Y)\in\kg, \quad \mu_{\pg}(X,Y)\in\pg, \qquad\forall X,Y\in\pg.
\end{equation}
The $\RR^*$-action on $\hca_{q,n}$, $\mu\mapsto c\cdot\mu$, can therefore be considered as a geometric rescaling of the homogeneous space $(G_\mu/K_\mu,g_\mu)$.

\subsection{Convergence}\label{convergence}
We now survey some definitions and results given in \cite{spacehm} about convergence of homogeneous manifolds. In order to define some notions of convergence, of a sequence $(M_k,g_k)$ of homogeneous manifolds, to a homogeneous manifold $(M,g)$, we start by requiring the existence of a sequence $\Omega_k\subset M$ of open neighborhoods of a basepoint $p\in M$ together with embeddings $\phi_k:\Omega_k\longrightarrow M_k$ such that $\phi_k^*g_k\to g$ {\it smoothly} as $k\to\infty$ (i.e. the tensor $\phi_k^*g_k-g$ and its covariant derivatives of all orders each converge uniformly to zero on compact subsets of $M$ eventually contained in all $\Omega_k$).  We set $p_k:=\phi_k(p)\in M_k$.  According to the different conditions one may require on the size of the $\Omega_k$'s, we have the following notions of convergence in increasing degree of strength:

\begin{itemize}
\item {\it infinitesimal}: no condition on $\Omega_k$, it may even happen that $\bigcap\Omega_k=\{ p\}$ (i.e. only the germs of the metrics at $p$ are involved).  Nevertheless, the sequence $(M_k,g_k)$ has necessarily {\it bounded geometry} by homogeneity (i.e. for all $r>0$ and $j\in\ZZ_{\geq 0}$,
$\sup\limits_k\;\sup\limits_{B_{g_k}(p_k,r)} \|\nabla_{g_k}^j\Riem(g_k)\|_{g_k}<\infty$).

\item {\it local}: $\Omega_k$ stabilizes, i.e. there is a nonempty open subset $\Omega\subset\Omega_k$ for every $k$ sufficiently large.  A positive lower bound for the injectivity radii $\inj(M_k,g_k,p_k)$ therefore holds, which is often called the {\it non-collapsing} condition.

\item {\it pointed or Cheeger-Gromov}: $\Omega_k$ exhausts $M$, i.e. $\Omega_k$ contains any compact subset of $M$ for $k$ sufficiently large.  We note that in the homogeneous case, the location of the basepoints $p_k\in M_k$ and $p\in M$ play no role, neither in the pointed convergence nor in the bounds considered in the items above, in the sense that we can change all of them by any other sequence $q_k\in M_k$ and $q\in M$ and use homogeneity.  However, topology issues start to come in at this level of convergence.

\item {\it smooth (up to pull-back by diffeomorphisms)}: $\Omega_k=M$ and $\phi_k:M\longrightarrow M_k$ is a diffeomorphism for all $k$.  This necessarily holds when $M$ is compact.  Recall that if $M$ is noncompact, then $\phi_k^*g_k$ converges smoothly to $g$ uniformly on compact sets in $M$.
\end{itemize}

It follows at once from the definitions that these notions of convergence of homogeneous manifolds satisfy:

\begin{center}
smooth $\quad\Rightarrow\quad$ pointed $\quad\Rightarrow\quad$ local $\quad\Rightarrow\quad$ infinitesimal.
\end{center}

None of the converse assertions hold (see \cite{spacehm}).  However, it is worth noticing that local convergence implies bounded geometry and non-collapsing for the sequence, and thus there must exist a pointed convergent subsequence by the compactness theorem.

We may also consider convergence of the algebraic side of homogeneous manifolds.  Recall from Section \ref{varhs} the space $\hca_{q,n}$ of Lie algebras parameterizing the set of all $n$-dimensional simply connected homogeneous spaces with $q$-dimensional isotropy, which inherits the usual vector space topology from $V_{q+n}$.  We shall always denote by $\mu_k\to\lambda$ the convergence in $\hca_{q,n}$ relative to such topology, which turns out to be essentially equivalent to infinitesimal convergence of homogeneous manifolds.

\begin{theorem}\label{convmu}\cite[Theorem 6.12]{spacehm}
Let $\mu_k$ be a sequence and $\lambda$ an element in $\hca_{q,n}$.
\begin{itemize}
\item[(i)] If $\mu_k\to\lambda$ in $\hca_{q,n}$ (usual vector space topology), then $(G_{\mu_k}/K_{\mu_k},g_{\mu_k})$ infinitesimally converges to $(G_{\lambda}/K_{\lambda},g_{\lambda})$.

\item[(ii)] If $(G_{\mu_k}/K_{\mu_k},g_{\mu_k})$ infinitesimally converges to $(G_{\lambda}/K_{\lambda},g_{\lambda})$, then
    $$
    (\mu_k)_{\pg}\to\lambda_{\pg},
    $$
    where the subscript $\pg$ is defined as in {\rm (\ref{decmu})}.
\end{itemize}
\end{theorem}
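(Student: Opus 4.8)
The plan is to rely on two complementary facts that make Theorem~\ref{convmu} an essentially algebraic statement: first, that for a presentation $(G_\mu/K_\mu,g_\mu)$ as in Section~\ref{varhs} all of the curvature data at the origin is a \emph{universal polynomial} function of the structure constants of $\mu$ (in the fixed basis of $(\pg,\ip)$); and second, that this curvature data, together with the normalization $g_\mu(o_\mu)=\ip$, recaptures the germ of the metric at $o_\mu$. For part (i), recall that the Levi--Civita connection of $(G_\mu/K_\mu,g_\mu)$ at the origin is the Nomizu operator, a universal polynomial expression in the structure constants of $\mu$. Inserting this into the standard curvature formula for reductive homogeneous spaces and differentiating, one gets that $\Riem(g_\mu)(o_\mu)$ and every iterated covariant derivative $\nabla^j\Riem(g_\mu)(o_\mu)$, viewed as tensors on the fixed Euclidean space $(\pg,\ip)$, are likewise universal polynomials in the structure constants of $\mu$. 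Hence $\mu_k\to\lambda$ forces $\nabla^j\Riem(g_{\mu_k})(o_{\mu_k})\to\nabla^j\Riem(g_\lambda)(o_\lambda)$ for all $j\geq 0$. Passing to geodesic normal coordinates centered at the origin, where the Taylor coefficients of a metric are themselves universal polynomials in the $\nabla^j\Riem(\cdot)(o)$, one concludes that in these coordinates $g_{\mu_k}$ and all of its derivatives converge to $g_\lambda$ at the origin; setting $\phi_k:=\exp^{g_{\mu_k}}_{o_{\mu_k}}\circ(\exp^{g_\lambda}_{o_\lambda})^{-1}$ on a (possibly shrinking) normal ball $\Omega_k$ around $o_\lambda$, and identifying $T_{o_\lambda}\cong\pg\cong T_{o_{\mu_k}}$ via the fixed basis, yields $\phi_k^*g_{\mu_k}-g_\lambda\to 0$ together with all covariant derivatives at $o_\lambda$, which is exactly infinitesimal convergence. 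The only routine point here is the implication ``jets of curvature converge $\Rightarrow$ jets of the metric converge in normal coordinates'', which follows from the explicit normal-coordinate expansion of a Riemannian metric.

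For part (ii), conversely, infinitesimal convergence supplies embeddings $\phi_k$ near $p=o_\lambda$ with $\phi_k^*g_{\mu_k}\to g_\lambda$ smoothly on shrinking neighborhoods. Since $\Riem$ and its covariant derivatives are isometry invariants and every $(G_{\mu_k}/K_{\mu_k},g_{\mu_k})$ is homogeneous, after passing to a subsequence so that $d\phi_k|_p\to O\in\Or(\pg,\ip)$ one again obtains $\nabla^j\Riem(g_{\mu_k})(o_{\mu_k})\to\nabla^j\Riem(g_\lambda)(o_\lambda)$ for all $j$, now as tensors on the fixed $(\pg,\ip)$. It remains to read $\lambda_\pg$ off this data. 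Here I would use that $\mu|_{\pg\times\pg}$ enters the geometry only through the Nomizu operator $\Lambda_\mu(X)Y=\unm\mu_\pg(X,Y)+U_\mu(X,Y)$, whose skew part in $X,Y$ is $\unm\mu_\pg$, and that by the Ambrose--Singer description of homogeneous structures $\Lambda_\mu$ is encoded, up to the gauge freedom of the presentation, in the $1$-jet $(\Riem,\nabla\Riem)$ at $o_\mu$ together with the normalization $g_\mu(o_\mu)=\ip$. The relevant gauge group is $\Gl(\kg)\times\Or(\pg,\ip)$: a change of the non-canonical basis of $\kg$ alters $\ad_\mu\kg|_\pg$ and $\mu|_{\kg\times\kg}$ but \emph{leaves $\mu_\pg$ unchanged} (cf.\ the discussion after Proposition~\ref{const}), while the $\Or(\pg,\ip)$-ambiguity in $d\phi_k|_p$ is removed by the normalization $g_\mu(o_\mu)=\ip$ together with the convergence of the metric jets established above. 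Unwinding these identifications gives $(\mu_k)_\pg\to\lambda_\pg$ in $\lamn$, and simultaneously explains why no more can be expected in general: the remaining components $\mu|_{\kg\times\ggo}$ are pinned down only modulo $\Gl(\kg)$.

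The step I expect to be the main obstacle is precisely this recovery of $\mu_\pg$ — and exactly $\mu_\pg$, with no leftover orthogonal ambiguity — from the germ of the metric at the origin in part (ii): it requires a careful treatment of the isotropy gauge $\Gl(\kg)\times\Or(\pg,\ip)$ and of the role of the normalization $g_\mu(o_\mu)=\ip$, rather than any hard estimate. Everything else reduces to manipulations with the classical curvature formulas for reductive homogeneous spaces and with normal coordinates; the complete argument is carried out in \cite{spacehm}.
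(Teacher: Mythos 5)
First, a caveat: this paper does not actually prove Theorem \ref{convmu} --- it is quoted from \cite[Theorem 6.12]{spacehm} and used as a black box --- so there is no in-paper argument to compare yours against, only the argument in that reference. For part (i) your route is sound and essentially the expected one: the Nomizu operator, hence $\Riem(g_\mu)(o_\mu)$ and all $\nabla^j\Riem(g_\mu)(o_\mu)$ (which are parallel for the canonical connection), are universal polynomials in the structure constants, so $\mu_k\to\lambda$ forces convergence of all curvature jets at the origin, and via the normal-coordinate expansion this controls the full jet of $\phi_k^*g_{\mu_k}-g_\lambda$ at $p$, which is all that infinitesimal convergence requires since the $\Omega_k$ are allowed to shrink to $\{p\}$. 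The only cosmetic difference from \cite{spacehm} is that the charts used there are the Lie-theoretic ones $\pi_\mu\circ\exp_\mu$ (which is why the Lie injectivity radius governs the upgrade to local convergence in Theorem \ref{convmu2}) rather than Riemannian normal coordinates; for part (i) either choice works.

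Part (ii) is where there is a genuine gap, and it sits exactly at the step you flagged as the main obstacle --- but the problem is worse than a delicate gauge-fixing issue: the step is not available at all in the framework you set up. Your argument feeds only isometry invariants of the germ at the origin into the recovery of $\mu_\pg$: the embeddings $\phi_k$, the limit $d\phi_k|_p\to O\in\Or(\pg,\ip)$, and the convergence of the jets $(\Riem,\nabla\Riem,\dots)$. But $\mu_\pg$ is \emph{not} a function of the isometry class of the germ, even modulo $\Gl(\kg)\times\Or(\pg,\ip)$ and even with the normalization $g_\mu(o_\mu)=\ip$. The Ambrose--Singer theorem asserts that a homogeneous structure tensor exists and is parallel for the associated connection; it does not assert that it is determined by $(\Riem,\nabla\Riem)$, and in general a fixed homogeneous metric carries many inequivalent homogeneous structures with different $\mu_\pg$. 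A concrete witness inside this very paper: take $\mu=\mu_{0,1,1}\in\hca_{0,3}$ from Example \ref{ex0-3ricci}, the flat left-invariant metric on $\widetilde{E}(2)$, and $\lambda=0$, the abelian $\RR^3$. Every curvature jet of both vanishes, the germs at the origin are isometric via a map with orthogonal differential, yet $\mu_\pg=\mu\neq 0=\lambda_\pg$. Consequently no argument that ``unwinds'' the curvature jets can output $\mu_\pg$, and your proof of (ii) cannot be completed as described. This example also shows that part (ii) cannot be a consequence of the isometry class of the germs alone; the correct proof must exploit information attached to the specific presentations and embeddings beyond what you retain (this is precisely the kind of subtlety that \cite[Remark 6.13]{spacehm} is flagging, and it is why one should defer to the exact formulation and proof of \cite[Theorem 6.12]{spacehm} rather than reconstruct (ii) from Riemannian invariants).
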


Both the converse assertions to (i) and (ii) in this theorem can in general be false (see \cite[Remark 6.13]{spacehm}).  As some sequences of Aloff-Walach spaces show (see \cite[Example 6.6]{spacehm}), in order to get the stronger local convergence from the usual convergence of brackets $\mu_k\to\lambda$, and consequently pointed subconvergence, it is necessary (and also sufficient) to require an `algebraic' non-collapsing type condition.
The {\it Lie injectivity radius} of $\left(G_\mu/K_\mu,g_\mu\right)$, $\mu\in\hca_{q,n}$, is the largest $r_\mu>0$ such that
$$
\pi_\mu\circ\exp_\mu: B(0,r_\mu)\longrightarrow G_\mu/K_\mu,
$$
is a diffeomorphism onto its image, where $\exp_\mu:\ggo\longrightarrow G_{\mu}$ is the Lie exponential map, $\pi_\mu:G_\mu\longrightarrow G_\mu/K_\mu$ is the usual quotient map and $B(0,r_\mu)$ denotes the euclidean ball of radius $r_\mu$ in $(\pg,\ip)$.

\begin{theorem}\label{convmu2}\cite[Theorem 6.14]{spacehm}
Let $\mu_k$ be a sequence such that $\mu_k\to\lambda$ in $\hca_{q,n}$, as $k\to\infty$, and assume that $\inf\limits_k r_{\mu_k}>0$.  Then,
\begin{itemize}
\item[(i)] $(G_{\mu_k}/K_{\mu_k},g_{\mu_k})$ locally converges to $(G_{\lambda}/K_{\lambda},g_{\lambda})$.

\item[(ii)] There exists a subsequence of $(G_{\mu_k}/K_{\mu_k},g_{\mu_k})$ which converges in the pointed sense to a homogeneous manifold locally isometric to $(G_{\lambda}/K_{\lambda},g_{\lambda})$.

\item[(iii)] $(G_{\mu_k}/K_{\mu_k},g_{\mu_k})$ converges in the pointed sense to $(G_{\lambda}/K_{\lambda},g_{\lambda})$ if $G_\lambda/K_\lambda$ is compact.
\end{itemize}
\end{theorem}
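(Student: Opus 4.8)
The plan is to manufacture the comparison maps demanded by the definitions of convergence directly out of the Lie exponential maps, and to exploit that, in the charts these provide, the homogeneous metric is an explicit real-analytic function of the bracket, uniformly on a fixed ball. For (i), put $r_0:=\inf_k r_{\mu_k}>0$ and fix $0<r<\min\{r_0,r_\lambda\}$ (recall $r_\lambda>0$ always, as $d(\pi_\lambda\circ\exp_\lambda)_0$ is the identity of $\pg$). For $\mu\in\{\lambda\}\cup\{\mu_k\}$ set $F_\mu:=\pi_\mu\circ\exp_\mu\colon B(0,r)\to G_\mu/K_\mu$, a diffeomorphism onto an open neighborhood of $o_\mu$ by the definition of the Lie injectivity radius. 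Take $\Omega:=F_\lambda(B(0,r))\subset G_\lambda/K_\lambda$ and $\phi_k:=F_{\mu_k}\circ(F_\lambda|_{B(0,r)})^{-1}\colon\Omega\to G_{\mu_k}/K_{\mu_k}$, which are embeddings; it remains to show $\phi_k^*g_{\mu_k}\to g_\lambda$ smoothly on $\Omega$, equivalently $F_{\mu_k}^*g_{\mu_k}\to F_\lambda^*g_\lambda$ in $C^\infty$ on $B(0,r)$. Using the standard formula for $d\exp_\mu$, the $G_\mu$-invariance of $g_\mu$, and the identification $T_{o_\mu}G_\mu/K_\mu\equiv\pg$, one computes, for $X\in B(0,r)$ and $Y,Y'\in\pg$,
\[
(F_\mu^*g_\mu)_X(Y,Y')=\big\langle \proy_\pg\,\Phi_\mu(X)Y,\ \proy_\pg\,\Phi_\mu(X)Y'\big\rangle,\qquad \Phi_\mu(X):=\sum_{m\geq 0}\tfrac{(-1)^m}{(m+1)!}(\ad_\mu X)^m,
\]
where $\proy_\pg\colon\ggo\to\pg$ is the projection with kernel $\kg$. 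Since $\ad_\mu X$ is bilinear in $(X,\mu)$, this series converges in operator norm uniformly for $\|X\|\le r$ and $\mu$ in a bounded set, so $(X,\mu)\mapsto(F_\mu^*g_\mu)_X$ is real-analytic; hence $\mu_k\to\lambda$ in the finite-dimensional space $V_{q+n}$ forces the coefficients of $F_{\mu_k}^*g_{\mu_k}$ and all their $X$-derivatives to converge uniformly on $\overline{B(0,r)}$ to those of $F_\lambda^*g_\lambda$. This is exactly local convergence, which proves (i). (The hypothesis $\inf_k r_{\mu_k}>0$ enters only here, to fix a common domain for the charts; its absence is what leaves only infinitesimal convergence in the Aloff--Wallach type examples.)

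For (ii), local convergence entails uniformly bounded geometry and a uniform positive lower bound on the injectivity radii of the $(G_{\mu_k}/K_{\mu_k},g_{\mu_k})$ (by homogeneity, as recalled in Section \ref{convergence}), so the Cheeger--Gromov--Hamilton compactness theorem yields a subsequence converging in the pointed sense to a complete manifold $(N,g_\infty,p_\infty)$, which is moreover homogeneous, being a pointed limit of homogeneous spaces with such uniform bounds. Since pointed convergence implies local convergence, this subsequence converges locally both to $N$ and, by (i), to $G_\lambda/K_\lambda$; comparing the two families of comparison maps near the basepoints produces, via a standard Arzela--Ascoli / Cheeger--Gromov argument (legitimate by the uniform $C^\infty$ bounds), a smooth map that carries, in the limit, $g_\infty$ to $g_\lambda$, i.e.\ a local isometry between a neighborhood of $p_\infty$ in $N$ and a neighborhood of $o_\lambda$ in $G_\lambda/K_\lambda$. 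By homogeneity of both spaces, $N$ is locally isometric to $(G_\lambda/K_\lambda,g_\lambda)$, which is (ii).

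For (iii), assume $G_\lambda/K_\lambda$ is compact and let $(\mu_{k_j})$ be an arbitrary subsequence; by (ii), a further subsequence converges in the pointed sense to a homogeneous $N$ locally isometric to $G_\lambda/K_\lambda$. Now $G_\lambda/K_\lambda$ is simply connected --- being the base of the fibration $K_\lambda\to G_\lambda\to G_\lambda/K_\lambda$ with $G_\lambda$ simply connected and $K_\lambda$ connected --- as well as complete, so a developing-map (Cartan--Ambrose--Hicks type) argument shows that the universal Riemannian cover of $N$ is isometric to $G_\lambda/K_\lambda$, whence $N=(G_\lambda/K_\lambda)/\Gamma$ for a finite group $\Gamma$ of isometries acting freely; in particular $N$ is compact. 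But pointed convergence to a compact limit is automatically smooth through diffeomorphisms --- the exhausting neighborhoods eventually equal $N$, and an embedding of the compact connected $N$ into the connected $G_{\mu_{k_j}}/K_{\mu_{k_j}}$ of the same dimension is a diffeomorphism --- so $G_{\mu_{k_j}}/K_{\mu_{k_j}}\cong N$ for $j$ large; as the former is simply connected, so is $N$, hence $\Gamma$ is trivial and $N=G_\lambda/K_\lambda$. Every subsequence thus subconverges in the pointed sense to the same limit $G_\lambda/K_\lambda$, and therefore so does the whole sequence.

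The main difficulty is identifying the limit rather than constructing it. Part (i) is, once the formula for $F_\mu^*g_\mu$ is in hand, a routine uniform estimate on a power series whose coefficients are polynomial in the structure constants, and the Lie injectivity radius bound serves only to keep the domain fixed. In (ii) the delicate point is that the compactness theorem produces merely \emph{some} complete manifold, so one needs that local --- indeed infinitesimal --- limits are unique up to local isometry in order to recognize it as locally isometric to $G_\lambda/K_\lambda$. In (iii) the crux is to promote ``locally isometric to the compact $G_\lambda/K_\lambda$'' to ``isometric to it'': this leans on the simple connectivity of $G_\lambda/K_\lambda$, on the (developing-map / Cartan--Ambrose--Hicks) fact that a complete manifold locally isometric to it shares its universal cover, and on the observation that a pointed limit which happens to be compact is attained smoothly through diffeomorphisms, so that it inherits simple connectivity from the $G_{\mu_k}/K_{\mu_k}$.
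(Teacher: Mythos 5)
This theorem is not proved in the present paper: it is quoted verbatim from \cite[Theorem 6.14]{spacehm}, so there is no in-house argument to compare yours against. Judged on its own terms, your reconstruction is correct and follows what is essentially the intended route, and it is consistent with the ingredients this paper does record: the definition of the Lie injectivity radius, the bound (\ref{expdif}), and the remark in Section \ref{convergence} that local convergence forces bounded geometry and non-collapsing, hence pointed subconvergence via the compactness theorem. Part (i) is the substantive computation, and your chart formula $(F_\mu^*g_\mu)_X(Y,Y')=\langle\proy_{\pg}\Phi_\mu(X)Y,\proy_{\pg}\Phi_\mu(X)Y'\rangle$ is right (this is exactly where condition (h3) and the identification $T_{o_\mu}G_\mu/K_\mu\equiv\pg$ via Killing fields enter); joint real-analyticity in $(X,\mu)$ then gives $C^\infty$ convergence on the common ball that $\inf_k r_{\mu_k}>0$ supplies, and you correctly observe that this hypothesis is used nowhere else. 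Parts (ii) and (iii) lean on several standard but nontrivial external facts that you invoke rather than prove: Hamilton--Cheeger--Gromov compactness, homogeneity of pointed limits of homogeneous manifolds, uniqueness of local limits up to local isometry, the extension of a local isometry between complete real-analytic (homogeneous) manifolds to a Riemannian covering of the simply connected one, and the fact that pointed convergence to compact limits is governed by a metrizable topology so that the ``every subsequence subconverges to the same limit'' principle applies. Each of these is legitimate, but in a self-contained write-up they would need precise references; the most delicate is the Arzel\`a--Ascoli identification of the limit in (ii), which is precisely the uniqueness statement that fails at the level of merely infinitesimal convergence and is therefore the step carrying the real content of the hypothesis.
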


The limit for the pointed subconvergence stated in the above theorem may not be unique, as certain sequence of alternating left-invariant metrics on $S^3$ (Berger spheres) and $\widetilde{\Sl_2}(\RR)$ shows (see \cite[Example 6.17]{spacehm}).

\begin{example}\label{lieinj}
We take the basis of the Lie subalgebra $\RR I\oplus\sug(2)\subset\glg_2(\CC)$ given by
$$
Y_0=\left[\begin{matrix} 1&0 \\ 0&1 \end{matrix}\right], \quad
Y_1=\left[\begin{matrix} 0&-1 \\ 1&0 \end{matrix}\right], \quad
Y_2=\left[\begin{matrix} \im&0 \\ 0&-\im \end{matrix}\right], \quad
Y_3=\left[\begin{matrix} 0&\im \\ \im&0 \end{matrix}\right],
$$
whose nonzero Lie brackets are
$$
[Y_1,Y_2]=2Y_3, \qquad [Y_1,Y_3]=-2Y_2, \qquad [Y_2,Y_3]=2Y_1.
$$
For $a,b\in\RR$, $b>0$, the new basis
$$
Z_1=-\tfrac{a}{b}Y_0+\unm Y_1, \quad X_1=Y_0, \quad X_2=\tfrac{\sqrt{b}}{2}Y_2, \quad X_3=\tfrac{\sqrt{b}}{2}Y_3,
$$
satisfies
\begin{equation}\label{lbab}
[X_3,Z_1]=X_2, \qquad [Z_1,X_2]=X_3, \qquad [X_2,X_3]=aX_1+bZ_1.
\end{equation}
Let $\mu=\mu_{a,b}$ denote the element of $\hca_{1,3}$ defined as in (\ref{lbab}) (see Section \ref{varhs}).  Then $(\ggo,\mu)$ is isomorphic to $\RR\oplus\sug(2)$ and $(G_\mu/K_\mu,g_\mu)$ is isometric to a left-invariant metric on $S^3$ for any $b>0$.  In order to apply Theorem \ref{convmu2}, we need to know a lower bound for the Lie injectivity radius $r_\mu$.  We have that $K_\mu=\exp_\mu(\RR Z_1)$, and thus
\begin{equation}\label{expeq}
\pi_\mu(\exp_\mu(r_1X_1+r_2X_2+r_3X_3))=\pi_\mu(\exp_\mu(s_1X_1+s_2X_2+s_3X_3))
\end{equation}
if and only if
$$
e^{r_1}\exp\left(\tfrac{\im\sqrt{b}}{2}\left[\begin{smallmatrix} r_2&r_3 \\ r_3&-r_2 \end{smallmatrix}\right]\right)
=e^{s_1}\exp\left(\tfrac{\im\sqrt{b}}{2}\left[\begin{smallmatrix} s_2&s_3 \\ s_3&-s_2 \end{smallmatrix}\right]\right)e^{-at/b} \left[\begin{smallmatrix} \cos{t/2}&-\sin{t/2} \\ \sin{t/2}&\cos{t/2} \end{smallmatrix}\right],
$$
for some $t\in\RR$, where $\exp:\glg_2(\CC)\longrightarrow\Gl_2(\CC)$ is the usual exponential map.  By using uniqueness of the polar decomposition and the formula
$$
\exp\left(\im\left[\begin{smallmatrix} x&y \\ y&-x \end{smallmatrix}\right]\right)=
\tfrac{1}{N}\left[\begin{smallmatrix} x\sin{N}+N\cos{N}& y\sin{N} \\ y\sin{N}& -x\sin{N}+N\cos{N}\end{smallmatrix}\right],\qquad N:=\sqrt{x^2+y^2},
$$
it is easy to see that if (\ref{expeq}) holds then
$$
s_1-r_1=4k\pi\tfrac{a}{b}, \qquad \tfrac{\sqrt{b}}{2}\sqrt{r_2^2+r_3^2}=\pm\tfrac{\sqrt{b}}{2}\sqrt{s_2^2+s_3^2}+2j\pi , \qquad\mbox{for some}\; k,j\in\ZZ.
$$
We deduce that $\pi_\mu\circ\exp_\mu$ is injective on $B(0,r)$ for any $b>0$, where $r=\min\{2\pi\tfrac{|a|}{b}, 2\pi\tfrac{1}{\sqrt{b}}\}$.  On the other hand, it is well-known that
\begin{equation}\label{expdif}
\exp_\mu:B(0,\tfrac{\pi}{\|\mu\|})\longrightarrow G_\mu
\end{equation}
is always a diffeomorphism onto its image (see e.g. \cite[Lemma 6.19]{spacehm}), and consequently
$$
r_\mu\geq \min\left\{2\pi\tfrac{|a|}{b},\; 2\pi\tfrac{1}{\sqrt{b}},\; \tfrac{\pi}{\sqrt{2(a^2+b^2+2)}}\right\}.
$$
The analysis for $b<0$ is quite different.  Consider the basis $\{ Y_0,Y_1,\im Y_2,\im Y_3\}$ of the Lie algebra $\glg_2(\RR)=\RR I\oplus\slg_2(\RR)$.  The new basis
$$
Z_1=-\tfrac{a}{b}Y_0+\unm Y_1, \quad X_1=Y_0, \quad X_2=\tfrac{\sqrt{-b}}{2}\im Y_2, \quad X_3=\tfrac{\sqrt{-b}}{2}\im Y_3,
$$
gives the same Lie bracket as in (\ref{lbab}), that is, we get $\mu=\mu_{a,b}$ for $b<0$.  Thus $(G_\mu/K_\mu,g_\mu)$ is isometric to a left-invariant metric on $\widetilde{\Sl}_2(\RR)$ for any $b<0$, the simply connected cover of $\Sl_2(\RR)$.  If $\widetilde{\exp}:\slg_2(\RR)\longrightarrow\widetilde{\Sl}_2(\RR)$ is the exponential map, then
\begin{align*}
\exp_\mu(r_1X_1+r_2X_2+r_3X_3)=& \exp_\mu(r_1X_1)\exp_\mu(r_2X_2+r_3X_3) \\
=&\exp_\mu(r_1Y_0)\exp_\mu(r_2\tfrac{\sqrt{-b}}{2}\im Y_2+r_3\tfrac{\sqrt{-b}}{2}\im Y_3) \\
=& \left(r_1,\widetilde{\exp}(r_2\tfrac{\sqrt{-b}}{2}\im Y_2+r_3\tfrac{\sqrt{-b}}{2}\im Y_3)\right)\in\RR\times\widetilde{\Sl_2}(\RR)=G_\mu.
\end{align*}
Since
$$
\exp_\mu(tZ_1)=\left(-\tfrac{at}{b},\widetilde{\exp}(\tfrac{t}{2}Y_1)\right)\in\RR\times\widetilde{\Sl_2}(\RR)=G_\mu, \qquad\forall t\in\RR,
$$
and $(x,y,z)\mapsto\widetilde{\exp}(xY_1)\widetilde{\exp}(yY_2+zY_3)$ is a diffeomorphism between $\RR^3$ and $\widetilde{\Sl_2}(\RR)$, it follows that (\ref{expeq}) holds if and only if $t=0$ and $r_i=s_i$ for all $i=1,2,3$.  This implies that $r_\mu=\infty$ for any $b<0$.

Finally, for $b=0$, one has that $r_\mu=\infty$ as $G_\mu=K_\mu\ltimes H_3$, where $H_3$ denotes the Heisenberg Lie group, and so $\exp_\mu:\pg\longrightarrow G_\mu$ coincides with the exponential map of $H_3$, which is well-known to be a diffeomorphism.

As a consequence of the computation of $r_\mu$ given above, we obtain that if $(a_k,b_k)\to (a_\infty,b_\infty)$, then $\mu_{(a_k,b_k)}\to \mu_{(a_\infty,b_\infty)}$ produces the following types of convergence:

\begin{itemize}
\item $a_\infty\ne 0$, $b_\infty>0$: pointed, by Theorem \ref{convmu2}, (iii).

\item $b_\infty\leq 0$: local (or pointed subconvergence), by Theorem \ref{convmu2}, (i) and (ii).
\end{itemize}
See Section \ref{exdim3} for a study of the Ricci flow on these manifolds.
\end{example}

In the case of left-invariant metrics on Lie groups (i.e. $\hca_{0,n}=\lca_n$, the variety of $n$-dimensional Lie algebras), the parts of the brackets $\mu_k$ which might not be affected by an infinitesimal convergence $\tilde{g}_{\mu_k}\to\tilde{g}_\lambda$ are not present, as $q=0$, and the condition $\inf\limits_k r_{\mu_k}>0$ automatically holds when $\mu_k\to\lambda$ by (\ref{expdif}).

Theorems \ref{convmu2} and \ref{convmu} can therefore be rephrased in the case of Lie groups in a stronger way (see Theorem \ref{convmu3}).  One can say even more in some particular cases as compact or completely solvable Lie groups.  Another advantage in this case is that $\lca_n$ is closed in $V_n$, and thus any limit $\lambda$ of a sequence $\mu_k\in\lca_n$ must lie in $\lca_n$ and can always be identified with a left invariant metric on some Lie group.  On the contrary, $\hca_{q,n}$ is never closed in $V_{q+n}$ for $q>0$.

Recall that $\mu\in\lca_n$ is said to be {\it completely solvable} if all the eigenvalues of $\ad_\mu{X}$ are real for any $X$; in particular, $G_\mu$ is solvable and $\exp_\mu:\ggo\longrightarrow G_\mu$ is a diffeomorphism.  For $\mu\in\lca_n$, we denote by $(G_\mu,\ip)$ the corresponding homogeneous space $(G_\mu,g_\mu)$ attached to $\mu$ as in (\ref{hsmu}), i.e. the Lie group $G_\mu$ endowed with the left-invariant metric determined by the fixed inner product $\ip$ on $\ggo$.

\begin{theorem}\label{convmu3}\cite[Corollary 6.20]{spacehm}
Let $\mu_k$ be a sequence in $\lca_n=\hca_{0,n}$   Then the following conditions are equivalent:
\begin{itemize}
\item[(i)] $\mu_k\to\lambda$ in $\lca_n$ (usual vector space topology).

\item[(ii)] $(G_{\mu_k},\ip)$ infinitesimally converges to $(G_{\lambda},\ip)$.

\item[(iii)] $(G_{\mu_k},\ip)$ locally converges to $(G_{\lambda},\ip)$.

\item[(iv)] $(G_{\mu_k},\ip)$ converges in the pointed sense to $(G_{\lambda},\ip)$, provided $G_\lambda$ is compact or all $\mu_k$ are completely solvable.

\item[(v)] $g_{\mu_k}\to g_\lambda$ smoothly on $\RR^n\equiv\ggo$, provided all $\mu_k$ are completely solvable, where all $G_{\mu_k}$ and $G_\lambda$ are identified with $\ggo$ via the corresponding exponential maps.
\end{itemize}
In any case, there is always a subsequence of $(G_{\mu_k},\ip)$ that is convergent in the pointed sense to a homogeneous manifold locally isometric to $(G_{\lambda},\ip)$.
\end{theorem}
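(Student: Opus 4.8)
The plan is to specialize Theorems \ref{convmu} and \ref{convmu2} to the case $q=0$, where $\kg=0$, $\pg=\ggo$, every homogeneous space $G_\mu/K_\mu$ is just the Lie group $G_\mu$, and the $\pg$-component operation of (\ref{decmu}) is the identity, so that $(\mu_k)_\pg=\mu_k$ and $\lambda_\pg=\lambda$. Granting this, (i) $\Rightarrow$ (ii) is Theorem \ref{convmu}(i) and (ii) $\Rightarrow$ (i) is Theorem \ref{convmu}(ii). For (i) $\Rightarrow$ (iii) I would invoke Theorem \ref{convmu2}(i): its hypothesis $\inf_k r_{\mu_k}>0$ holds because $\mu_k\to\lambda$ bounds $\|\mu_k\|$ by some $C$, while the triviality of $K_{\mu_k}$ gives $\pi_{\mu_k}\circ\exp_{\mu_k}=\exp_{\mu_k}$, so (\ref{expdif}) forces $r_{\mu_k}\geq\pi/C$. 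The implication (iii) $\Rightarrow$ (ii) is the elementary chain local $\Rightarrow$ infinitesimal from Section \ref{convergence}. This closes the cycle among (i), (ii), (iii), and the final assertion of the theorem is then exactly Theorem \ref{convmu2}(ii), using again $\inf_k r_{\mu_k}>0$.

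For (iv) I would treat the two provisos separately. If $G_\lambda$ is compact, then (i) gives $\inf_k r_{\mu_k}>0$ as above, so Theorem \ref{convmu2}(iii) (with $K_\lambda$ trivial) yields pointed convergence of $(G_{\mu_k},\ip)$ to $(G_\lambda,\ip)$, that is (iv); conversely pointed $\Rightarrow$ local $\Rightarrow$ (i). If instead all the $\mu_k$ are completely solvable, I would route through (v): first note that $\lambda$ is completely solvable too, since for each $X$ the characteristic polynomial of $\ad_{\mu_k}X$ converges to that of $\ad_\lambda X$ and a limit of real roots is real; hence $\exp_{\mu_k}:\ggo\to G_{\mu_k}$ and $\exp_\lambda:\ggo\to G_\lambda$ are global diffeomorphisms and every $G_{\mu_k}$, $G_\lambda$ may be identified with $\ggo=\RR^n$ through them. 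Proving $g_{\mu_k}\to g_\lambda$ smoothly on $\RR^n$ is then a special case of smooth convergence, which gives (iv), and (iv) $\Rightarrow$ (i) as before.

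The substantive step is thus (i) $\Rightarrow$ (v) in the completely solvable case. Since $g_{\mu_k}$ is the left-invariant metric for which the left-invariant extension of the fixed $\ip$-orthonormal basis $\{X_1,\dots,X_n\}$ of $\pg=\ggo$ is orthonormal, it suffices to show that these frames converge in $C^\infty_{\mathrm{loc}}(\RR^n)$. By the standard formula for the differential of $\exp_\mu$, in exponential coordinates the value at $x\in\ggo$ of the left-invariant field determined by $v\in\ggo$ is $\tfrac{\ad_\mu x}{1-e^{-\ad_\mu x}}\,v$. The function $z\mapsto z/(1-e^{-z})$ is holomorphic on the strip $|\Im z|<2\pi$ and nowhere zero on $\RR$, so for completely solvable $\mu$, whose operators $\ad_\mu x$ have only real eigenvalues, $\tfrac{\ad_\mu x}{1-e^{-\ad_\mu x}}$ is defined and invertible for all $x$; by continuity of eigenvalues this persists uniformly for $x$ in any compact set and $\mu$ in a neighborhood of $\lambda$, as nearby brackets give eigenvalues of small imaginary part. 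Since $\mu\mapsto\ad_\mu(\cdot)$ is continuous and the functional calculus is real-analytic, the frames, and hence the metrics $g_{\mu_k}$, converge to those of $\lambda$ uniformly together with all derivatives on compact subsets of $\RR^n$.

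I expect this last step to be the main obstacle: one has to control the operators $\tfrac{\ad_{\mu_k}x}{1-e^{-\ad_{\mu_k}x}}$ and all their $x$-derivatives uniformly on compact sets while keeping the eigenvalues of $\ad_{\mu_k}x$ inside the strip where $z/(1-e^{-z})$ is holomorphic. This is exactly where complete solvability of the $\mu_k$ — and the resulting complete solvability of $\lambda$ — is indispensable, since with non-real eigenvalues the left-invariant frame need not even be globally defined in exponential coordinates by this formula. Everything else reduces to the cited theorems and the standard implications among the four notions of convergence.
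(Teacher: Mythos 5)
Your proposal is correct and follows exactly the reduction the paper itself indicates: Theorem \ref{convmu3} is imported from \cite[Corollary 6.20]{spacehm}, and the paragraph preceding it makes precisely your two key observations — that for $q=0$ the $\pg$-component of a bracket is the whole bracket, so Theorem \ref{convmu}(ii) closes the loop back to (i), and that $\inf_k r_{\mu_k}>0$ is automatic from (\ref{expdif}) because $K_{\mu_k}$ is trivial — after which (iii), (iv) for $G_\lambda$ compact, and the final subconvergence assertion all drop out of Theorem \ref{convmu2}. The only portion the paper defers entirely to the reference is statement (v) and the completely solvable clause of (iv), and your treatment there (closedness of complete solvability under limits of brackets, plus real-analytic dependence of $\tfrac{\ad_{\mu}x}{1-e^{-\ad_{\mu}x}}$ on $(\mu,x)$ while the spectrum stays real) is a sound way to supply the missing details.
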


\subsection{Ricci curvature}\label{riccisec}
As within any class of Riemannian manifolds, in order to study the Ricci flow for homogeneous spaces, a deep understanding of their Ricci curvature is crucial.  In this section, we describe the Ricci operator of $\left(G_\mu/K_\mu,g_\mu\right)$ for any $\mu\in\hca_{q,n}$.  Recall that $\ggo=\kg\oplus\pg$ is always the  $\Ad(K_{\mu})$-invariant or reductive decomposition, $\pg=T_{o_\mu}G_\mu/K_\mu$ and $g_\mu(o_\mu)=\ip$.

There is a unique element $H_{\mu}\in\pg$
which satisfies $\la H_{\mu},X\ra=\tr{\ad_\mu{X}}$ for any $X\in\pg$. Let $B_{\mu}:\pg\longrightarrow\pg$
denote the symmetric map defined by
$$
\la B_{\mu}X,Y\ra=\tr{\ad_\mu{X}\ad_\mu{Y}},\qquad\forall X,Y\in\pg.
$$
According to \cite[7.38]{Bss}, the Ricci operator $\Ricci_\mu$ of $\left(G_\mu/K_\mu,g_\mu\right)$ is given by:
\begin{equation}\label{ricci}
\Ricci_\mu=\mm_{\mu}-\unm B_{\mu}-S(\ad_{\mu}{H_{\mu}}|_{\pg}),
\end{equation}
where
\begin{equation}\label{sym}
S:\glg_n(\RR)\longrightarrow\glg_n(\RR), \qquad S(A):=\unm(A+A^t),
\end{equation}
is the symmetric part of an operator, and $\mm_{\mu}:\pg\longrightarrow\pg$ is the
symmetric operator defined by
\begin{align}\label{R}
\la \mm_{\mu}X,Y\ra =& -\unm\sum\la \mu(X,X_i),X_j\ra\la \mu(Y,X_i),X_j\ra \\
& +\unc\sum\la \mu(X_i,X_j),X\ra\la \mu(X_i,X_j),Y\ra, \qquad\forall X,Y\in\pg, \notag
\end{align}
for any orthonormal basis $\{ X_1,\dots,X_n\}$ of $(\pg,\ip)$.

A closer inspection to formula (\ref{ricci}) for $\Ricci_{\mu}$ shows that $H_{\mu}$ only depends on
$\mu_{\pg}$, as $\mu(\kg,\pg)\subset\pg$ (see (\ref{decmu})).  Notice that if the Lie algebra $(\kg,\mu)$ is unimodular, then
$H_{\mu_{\pg}}=0$ if and only if $(\ggo,\mu)$ is unimodular.  Also, the
operator $\mm_{\mu}$ only depends on $\mu_{\pg}$ and satisfies
\begin{equation}\label{mmR}
m(\mu_{\pg})=\tfrac{4}{||\mu_{\pg}||^2} \mm_{\mu_{\pg}},
\end{equation}
where $m:V_n\longrightarrow\sym(\pg)$ is the moment map for the
natural action of $\Gl_n(\RR)$ on $V_n:=\lamp$ (see e.g. \cite{HnzSchStt} or \cite{cruzchica} for more information on real moment maps).  In other words,
$\mm_{\mu_{\pg}}$ may be alternatively defined as follows:
\begin{equation}\label{Rmm}
\tr{\mm_{\mu_{\pg}}E}=\unc\la\pi(E)\mu_{\pg},\mu_{\pg}\ra, \qquad\forall
E\in\g,
\end{equation}
where $\pi(E)\mu_{\pg}=E\mu_{\pg}(\cdot,\cdot)-\mu_{\pg}(E\cdot,\cdot)-\mu_{\pg}(\cdot,E\cdot)$ is the representation of $\glg_n(\RR)$ on $V_n$ obtained as the derivative of the action (\ref{action}) with $q=0$ (see \cite[Proposition 3.5]{minimal}).  Here $\ip$ also denotes the inner product defined on $V_n$ by
\begin{equation}\label{innV}
\la\mu,\lambda\ra= \sum\la\mu(X_i,X_j),\lambda(X_i,X_j)\ra.
\end{equation}
Concerning $B_{\mu}$, we note that this is the Killing form of the Lie algebra $(\ggo,\mu)$ restricted to $\pg\times\pg$ in terms of the inner product $\ip$, and so it does depend on the value of $\mu$ on the whole space $\ggo\times\ggo$.

A formula for the Ricci operator $\Ricci_\mu$ of $\left(G_{\mu}/K_{\mu},g_\mu\right)$ may
therefore be rewritten as
\begin{equation}\label{ricci2}
\Ricci_\mu=\mm_{\mu_{\pg}}-\unm B_{\mu}-S(\ad_{\mu_{\pg}}{H_{\mu_{\pg}}}),
\end{equation}
where $\ad_{\mu_{\pg}}{H_{\mu_{\pg}}}:\pg\longrightarrow\pg$ is defined by
$\ad_{\mu_{\pg}}{H_{\mu_{\pg}}}(X)=\mu_{\pg}(H_{\mu_{\pg}},X)$ for all $X\in\pg$.

It follows that the scalar curvature is then given by
\begin{equation}\label{scalar}
R(\mu)=-\unc\|\mu_{\pg}\|^2-\unm\tr{B_\mu}-\| H_{\mu_{\pg}}\|^2.
\end{equation}

The next two examples reach all $3$-dimensional geometries.

\begin{example}\label{ex0-3ricci}
Let $\mu=\mu_{a,b,c}$ be the Lie bracket in $\hca_{0,3}=\lca_3$ defined by
$$
\mu(X_2,X_3)=aX_1, \qquad \mu(X_3,X_1)=bX_2, \qquad \mu(X_1,X_2)=cX_3.
$$
It is easy to see that $H_{\mu}=0$,
$$
\begin{array}{c}
B_\mu=\left[\begin{smallmatrix} -2bc&&\\ &-2ac& \\ && -2ab\end{smallmatrix}\right], \quad
M_\mu=-\unm\left[\begin{smallmatrix} -a^2+b^2+c^2&&\\ &a^2-b^2+c^2& \\ &&a^2+b^2-c^2\end{smallmatrix}\right], \\ \\
\Ricci_\mu=\unm\left[\begin{smallmatrix} a^2-(b-c)^2&&\\ &b^2-(a-c)^2& \\ &&c^2-(a-b)^2\end{smallmatrix}\right],
\end{array}
$$
and $R(\mu)=-\unm(a^2+b^2+c^2)+ab+ac+bc$.  This family covers all left-invariant metric (up to isometry and scaling) on all unimodular $3$-dimensional simply connected Lie groups, which are given by $S^3$, $\widetilde{\Sl_2}(\RR)$, $E(2)$, $Sol$, $Nil$ and $\RR^3$ (see \cite{Mln}, \cite{GlcPyn} or \cite[Example 3.2]{spacehm} for a more detailed treatment of this example).
\end{example}

\begin{example}\label{ex1-3ricci}
We refer to \cite[Example 3.3]{spacehm} for more details on this example.  Consider the bracket $\mu=\mu_{a,b,c}\in\hca_{1,3}$ given by
$$
\left\{\begin{array}{lll}
\mu(X_3,Z_1)=X_2, & \mu(X_2,X_3)=aX_1+bZ, & \mu(X_3,X_1)=cX_2, \\
\mu(Z_1,X_2)=X_3, &                           & \mu(X_1,X_2)=cX_3,
\end{array}\right.
$$
where $\kg=\RR Z_1$ and $\{ X_1,X_2,X_3\}$ is an orthonormal basis of $(\pg,\ip)$.  It is easy to see that $H_{\mu}=0$,
$$
\begin{array}{c}
B_\mu=\left[\begin{smallmatrix} -2c^2&&\\ &-2(b+ac)& \\ && -2(b+ac)\end{smallmatrix}\right], \quad
M_\mu=-\unm\left[\begin{smallmatrix} 2c^2-a^2&&\\ &a^2& \\ &&a^2\end{smallmatrix}\right], \\ \\
\Ricci_\mu=\left[\begin{smallmatrix} \unm a^2&&\\ &-\unm a^2+b+ac& \\ &&-\unm a^2+b+ac\end{smallmatrix}\right], \quad
R(\mu)=-\unm a^2+2(b+ac).
\end{array}
$$
The homogeneous metrics attained by this family are
$\RR\times S^2$, $\RR\times H^2$, the flat metric on $E(2)$, some left-invariant metrics on $S^3$ (Berger spheres) and $\widetilde{\Sl_2}(\RR)$, and all the left-invariant metrics on the Heisenberg group ($Nil$) and $\RR^3$, up to isometry.
\end{example}

\section{Homogeneous Ricci flows}\label{hrf}

Let $(M,g_0)$ be a simply connected homogeneous manifold and consider a presentation of $(M,g_0)$ as a homogeneous space of the form $\left(G_{\mu_0}/K_{\mu_0},g_{\mu_0}\right)$ for some $\mu_0\in\hca_{q,n}$, with reductive decomposition $\ggo=\kg\oplus\pg$ (see Section \ref{varhs}).  Let $g(t)$ be a solution to the {\it Ricci flow}
\begin{equation}\label{RF}
\dpar g(t)=-2\ricci(g(t)),\qquad g(0)=g_0.
\end{equation}
The short time existence of a solution follows from \cite{Shi}, as $g$ is homogeneous and hence complete and of bounded curvature. Alternatively, one may require $G_{\mu_0}$-invariance of $g(t)$ for all $t$, and thus $(M,g(t))$ would have, as a homogeneous space, the presentation $\left(G_{\mu_0}/K_{\mu_0},g_{\ip_t}\right)$, where $\ip_t:=g(t)(o_{\mu_0})$ is a family of inner products on $\pg$.  The Ricci flow equation (\ref{RF}) is therefore equivalent to the ODE
\begin{equation}\label{RFip}
\ddt \ip_t=-2\ricci(\ip_t), \qquad\ip_0=\ip,
\end{equation}
where $\ricci(\ip_t):=\ricci(g(t))(o_{\mu_0})$, and hence short time existence and uniqueness of the solution in the class of $G_{\mu_0}$-invariant metrics is guaranteed.  Recall that the set of all inner products on $\pg$ is parameterized by the symmetric space $\Gl_n(\RR)/\Or(n)$ and the subset of those which are $\Ad(K_{\mu_0})$-invariant is a submanifold of it.  As the field defined by $\ricci$ is tangent to this submanifold, the solution $\ip_t$ to (\ref{RFip}) stays $\Ad(K_{\mu_0})$-invariant for all $t$.  In this way, $g(t)$ is homogeneous for all $t$, and hence the uniqueness within the set of complete and with bounded curvature metrics follows from \cite{ChnZhu}.  It is actually a simple matter to prove that such a uniqueness result, in turn, implies our assumption of $G_{\mu_0}$-invariance, as the solution must preserve any isometry of the initial metric.  The need for this circular argument is due to the fact that the uniqueness of the Ricci flow solution is still an open problem in the noncompact general case (see \cite{Chn}).

In any case, there is an interval $(a,b)\subset\RR$ such that $0\in (a,b)$ and where existence and uniqueness (within complete and with bounded curvature metrics) of the Ricci flow $g(t)$ starting at a homogeneous manifold $(M,g_0)$ hold.

\begin{remark}
One can use in the homogeneous case the existence and uniqueness of the solution $g(t)$, forward and backward from any $t\in(a,b)$, to get that $\Iso(M,g(t))=\Iso(M,g_0)$ for all $t$.  This fact has recently been proved for the more general class of all complete and with bounded curvature metrics in \cite{Kts}.
\end{remark}

It is easy to check that if
$$
\ip_t=\la P(t)\cdot,\cdot\ra,
$$
where $P(t)$ is the corresponding smooth curve of positive definite operators of $(\pg,\ip)$, then equation (\ref{RFip}) determines the following
ODE for $P$:
\begin{equation}\label{RFh}
\ddt P=-2P\Ricci(\ip_t),
\end{equation}
where $\Ricci(\ip_t):=\Ricci(g(t))(o_{\mu_0}):\pg\longrightarrow\pg$ is the Ricci operator at the origin.

On the other hand, if
$$
\ip_t=\la h(t)\cdot,h(t)\cdot\ra
$$
for some smooth curve $h:(a,b)\longrightarrow\G$ (which there always exists but
it is far from being unique), then $h^th=P$ and so equation (\ref{RFip}) determines the following
ODE for $h$:
\begin{equation}\label{RFh}
h^t\ddt h+\left(\ddt h\right)^th=-2h^th\Ricci(\ip_t).
\end{equation}
Here $h^t$ denotes transpose with respect to our fixed inner product $\ip$.

It follows from the $\Ad(K_{\mu_0})$-invariance of $\ip_t$ that condition (\ref{adkh}) holds for $h(t)$ and $\mu_0$ for all $t$.  Proposition \ref{const} therefore implies that
$$
\mu(t):=\left[\begin{smallmatrix} I&0\\ 0&h(t)\end{smallmatrix}\right]\cdot\mu_0
$$
is a curve in $\hca_{q,n}$, which is equivalent in a way (to be more precisely defined later) to the Ricci flow.  Thus the understanding of its evolution equation and dynamical properties may provide new insights into the study of homogeneous Ricci flows and solitons.  This is the aim of the next section.

\subsection{The bracket flow}\label{lbflow}
The parametrization of homogeneous manifolds as points in the set $\hca_{q,n}$ given in Section \ref{varhs} suggests the following natural question:

\begin{quote}
How does the Ricci flow look on $\hca_{q,n}$?
\end{quote}

More precisely,

\begin{quote}
what is the evolution equation a curve $\mu(t)\in\hca_{q,n}$ must satisfy in order to get that $\left(G_{\mu(t)}/K_{\mu(t)},g_{\mu(t)}\right)$ is a Ricci flow solution up to pullback by time-dependent diffeomorphisms?
\end{quote}

The answer is the content of Theorem \ref{eqfl} below and is given by the following ODE for a curve
$$
\mu(t)\in V_{q+n}=\lamg
$$
of bilinear and skew-symmetric maps, which will be called the {\it bracket flow} from now on:
\begin{equation}\label{BF}
\ddt\mu=-\pi\left(\left[\begin{smallmatrix} 0&0\\ 0&\Ricci_{\mu}
\end{smallmatrix}\right]\right)\mu, \qquad\mu(0)=\mu_0.
\end{equation}
Here $\Ricci_{\mu}$ is defined as in (\ref{ricci}) and $\pi:\glg_{q+n}(\RR)\longrightarrow\End(V_{q+n})$ is the natural representation given by
\begin{equation}\label{actiong}
\pi(A)\mu=A\mu(\cdot,\cdot)-\mu(A\cdot,\cdot)-\mu(\cdot,A\cdot),
\qquad A\in\glg_{q+n}(\RR),\quad\mu\in V_{q+n}.
\end{equation}
We note that $\pi$ is the derivative of the $\Gl_{q+n}(\RR)$-representation defined in (\ref{action}) and $\pi(A)\mu=0$ if and only if $A\in\Der(\mu)$, the Lie algebra of
derivations of the algebra $\mu$.

Equation (\ref{BF}) is well defined as $\Ricci_\mu$ can be computed for any $\mu\in V_{q+n}$ by using formula (\ref{ricci}), and not only for $\mu\in\hca_{q,n}$.    However, as the following lemma shows, this technicality is only needed to define the ODE.  Let $(a,b)$ denote from now on a time interval with $0\in (a,b)$ where a solution $\mu(t)$ exists.

\begin{lemma}\label{muflow}
If $\mu:(a,b)\longrightarrow V_{q+n}$ is a solution to {\rm (\ref{BF})} such that $\mu_0\in\hca_{q,n}$, then $\mu(t)\in\hca_{q,n}$ for all $t\in (a,b)$.
\end{lemma}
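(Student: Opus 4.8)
The plan is to verify that each of the four defining conditions (h1)--(h4) of $\hca_{q,n}$ is preserved along the bracket flow. The key observation is that the flow $\ddt\mu=-\pi(A(t))\mu$ with $A(t)=\left[\begin{smallmatrix} 0&0\\ 0&\Ricci_{\mu(t)}\end{smallmatrix}\right]$ is, formally, the linearization of the $\Gl_{q+n}(\RR)$-action (\ref{action}); so if we let $h(t)\in\Gl_{q+n}(\RR)$ be the solution of $\ddt h=-h A(t)$ (equivalently $h(t)^{-1}\ddt h(t)^{-1}\cdots$, i.e. $\ddt(h^{-1})=A(t)h^{-1}$) with $h(0)=I$, then by uniqueness of solutions to linear ODEs the curve $h(t)\cdot\mu_0$ solves (\ref{BF}) with the same initial datum, whence $\mu(t)=h(t)\cdot\mu_0$ on $(a,b)$. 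Since $\Ricci_{\mu(t)}$ is a symmetric operator of $(\pg,\ip)$, $A(t)$ commutes with nothing in particular, but $h(t)$ has the block-diagonal form $\left[\begin{smallmatrix} I&0\\ 0&h_n(t)\end{smallmatrix}\right]$ with $h_n(t)\in\Gl_n(\RR)$. Thus to conclude via Proposition \ref{const} it suffices to check the commutation condition (\ref{adkh}), i.e. that $[h_n(t)^t h_n(t),\ad_{\mu_0}\kg|_{\pg}]=0$ for all $t$.

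First I would make the reduction to $\mu(t)=h(t)\cdot\mu_0$ precise: write out that $\pi(A)$ is the derivative at the identity of $g\mapsto g\cdot\mu$, differentiate $t\mapsto h(t)\cdot\mu_0$ using the chain rule and the cocycle identity for the action, and match it against (\ref{BF}); then invoke ODE uniqueness. Second, I would establish (\ref{adkh}). Let $P(t):=h_n(t)^t h_n(t)$. Because $\Ricci_{\mu(t)}$ is $\Ad(K_{\mu(t)})$-invariant and, by construction (Proposition \ref{const}(i)), the Lie bracket on $\kg$ and the isotropy representation $\ad_\mu\kg|_\pg$ are unchanged by the block-diagonal action, the operator $\Ricci_{\mu(t)}$ commutes with $\ad_{\mu_0}Z|_\pg$ for every $Z\in\kg$. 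From $\ddt h_n=-h_n\Ricci_{\mu(t)}$ one gets $\ddt P = -P\Ricci_{\mu(t)}-\Ricci_{\mu(t)}P$ (using symmetry of $\Ricci$), a linear ODE for $P$; since $P(0)=I$ commutes with $\ad_{\mu_0}Z|_\pg$ and the right-hand side preserves this commutation (each factor $\Ricci_{\mu(t)}$ does), $[P(t),\ad_{\mu_0}Z|_\pg]=0$ for all $t$ by uniqueness applied to the linear ODE satisfied by the commutator (which starts at $0$). Hence (\ref{adkh}) holds, Proposition \ref{const} applies, and $\mu(t)=h(t)\cdot\mu_0\in\hca_{q,n}$ for all $t\in(a,b)$.

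There is a mild circularity to dispel: the block $\Ricci_{\mu(t)}$ in $A(t)$ is computed by formula (\ref{ricci}) for arbitrary brackets, and the $\Ad(K)$-invariance property I used to get $[\Ricci_{\mu(t)},\ad_{\mu_0}Z|_\pg]=0$ is a priori only known once $\mu(t)\in\hca_{q,n}$. The clean way around this is a continuity/maximality argument: the set $J:=\{t\in(a,b):\mu(s)\in\hca_{q,n}\ \forall s\in[0,t]\ \text{(or }[t,0]\text{)}\}$ is nonempty (contains $0$), and it is closed in $(a,b)$ because $\hca_{q,n}$ is characterized by conditions that pass to limits along the smooth curve $\mu(t)$ — (h1) is closed (Jacobi plus the inclusions $\mu(\kg,\kg)\subset\kg$, $\mu(\kg,\pg)\subset\pg$ are polynomial/linear), (h3) is closed, and (h4), (h2) are open-and-closed along this particular curve since $\ad_{\mu(t)}\kg|_\pg$ is constant in $t$. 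It is open because on $J$ the commutation $[\Ricci_{\mu(t)},\ad_{\mu_0}Z|_\pg]=0$ holds, so the ODE argument above shows $\mu(t)=h(t)\cdot\mu_0$ with $h(t)$ satisfying (\ref{adkh}) on a neighborhood, and Proposition \ref{const} keeps us inside $\hca_{q,n}$ there; hence $J=(a,b)$.

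I expect the only real obstacle to be bookkeeping the block structure and confirming that along the curve $\mu(t)=h(t)\cdot\mu_0$ the objects $\ad_\mu\kg|_\pg$ and the bracket on $\kg$ genuinely do not move (so that $\Ricci_{\mu(t)}$ commutes with $\ad_{\mu_0}Z|_\pg$ for the \emph{fixed} $\mu_0$); this is exactly what Proposition \ref{const}(i) and the form (\ref{formh})--(\ref{adkh}) of admissible $h$ deliver, so the argument is essentially a careful invocation of Proposition \ref{const} combined with uniqueness for linear ODEs.
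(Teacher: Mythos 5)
Your overall strategy (reduce to $\mu(t)=\tilde h(t)\cdot\mu_0$ with $\tilde h$ block-diagonal and then invoke Proposition \ref{const}) is sound and matches the paper's first step, and you have correctly located the crux: one must establish $[\Ricci_{\mu(t)},\ad_{\mu_0}Z|_{\pg}]=0$ for all $t$ and all $Z\in\kg$ \emph{before} knowing $\mu(t)\in\hca_{q,n}$, since the commutation property of Lemma \ref{prop}, (vii) is only available on $\hca_{q,n}$. The problem is that your open-closed argument does not actually resolve this circularity. Closedness of $J$ is fine (with some care for (h2) and (h4), which are not closed conditions in general but do pass to the limit here because the isotropy representation is constant on $J$). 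Openness is where it breaks: at $t_0=\sup J$ you know the commutation relation on $[0,t_0]$ only, and to rerun your ODE argument for $P(t)=h_n(t)^th_n(t)$ on $[t_0,t_0+\epsilon)$ you need $[\Ricci_{\mu(t)},\ad_{\mu_0}Z|_{\pg}]=0$ as an \emph{input} on that same interval --- which is exactly what you are trying to prove there. Continuity gives the vanishing of this commutator at $t_0$ but says nothing for $t>t_0$; nothing in your argument forces the vanishing to propagate forward. (Two secondary slips: the isotropy representation is \emph{not} ``unchanged by the block-diagonal action'' as you assert --- it gets conjugated, $\ad_{\tilde h\cdot\mu}Z|_{\pg}=h_n\ad_{\mu}Z|_{\pg}h_n^{-1}$ --- so its constancy is a consequence of the commutation, not a freestanding fact you may cite; and for $\tilde h(t)\cdot\mu_0$ to solve (\ref{BF}) you need $\ddt \tilde h=-A\tilde h$, not $\ddt \tilde h=-\tilde hA$.)

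The paper closes the loop with a symmetry-plus-uniqueness trick that never uses membership of $\mu(t)$ in $\hca_{q,n}$: for $Z\in\kg$ put $\psi=e^{\ad_{\mu_0}Z}$. This is an automorphism of $\mu_0$ preserving $\kg\oplus\pg$ and orthogonal on $\pg$ (by (h1) and (h3) for $\mu_0$), and formula (\ref{ricci}) --- which defines $\Ricci_\mu$ for \emph{every} $\mu\in V_{q+n}$ --- is equivariant under such maps: $\Ricci_{\psi\cdot\mu}=\psi|_{\pg}\Ricci_{\mu}(\psi|_{\pg})^{-1}$. Hence $\lambda(t):=\psi\cdot\mu(t)$ solves the same ODE (\ref{BF}) with $\lambda(0)=\mu_0$, so $\psi\cdot\mu(t)=\mu(t)$ by uniqueness; thus $\psi$ remains an automorphism of $\mu(t)$, the equivariance applied once more gives $[\psi|_{\pg},\Ricci_{\mu(t)}]=0$, and letting $Z$ range over a one-parameter family and differentiating yields $[\ad_{\mu_0}Z|_{\pg},\Ricci_{\mu(t)}]=0$ for all $t$, with no circularity. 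From there the linear ODE $\ddt\ad_{\mu}Z|_{\pg}=[\ad_{\mu}Z|_{\pg},\Ricci_{\mu}]$ forces the isotropy representation to be constant, and (h3), (h4), (h2) follow as you intended. You should replace your open-closed step by this argument (or by some other device that produces the commutation relation on all of $(a,b)$ a priori).
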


\begin{proof}
We must check conditions (h1)-(h4) in Definition \ref{hqn} for $\mu=\mu(t)$.  We have that the right hand side of (\ref{BF}) is tangent to the $\Gl_n(\RR)$-orbit of $\mu$:
$$
\pi\left(\left[\begin{smallmatrix} 0&0\\ 0&\Ricci_{\mu}
\end{smallmatrix}\right]\right)\mu =\dds|_0 e^{s\left[\begin{smallmatrix} 0&0\\ 0&\Ricci_{\mu}
\end{smallmatrix}\right]}\cdot\mu \in T_{\mu}\left(\left[\begin{smallmatrix} I&0\\ 0&\Gl_n(\RR)
\end{smallmatrix}\right]\cdot\mu\right)\subset V_{q+n}, \qquad\forall t\in (a,b).
$$
By a standard ODE theory argument, we get that $\mu(t)\in \left[\begin{smallmatrix} I&0\\ 0&\Gl_n(\RR)
\end{smallmatrix}\right]\cdot\mu_0$ for all
$t$, which implies that condition (h1) holds for $\mu(t)$, and furthermore,
\begin{equation}\label{muk}
\mu(t)|_{\kg\times\kg}=\mu_0, \qquad\forall t\in (a,b).
\end{equation}
Now, for each $Z\in\kg$, set $\psi:=e^{\ad_{\mu_0}{Z}}$.  It is easy to see by using that
$\psi|_{\pg}$ is orthogonal that
$\Ricci_{\psi.\mu}=\psi|_{\pg}\Ricci_{\mu}(\psi|_{\pg})^{-1}$, and thus the curve
$\lambda(t):=\psi\cdot\mu(t)$ satisfies
$$
\ddt\lambda=\psi\cdot\ddt\mu=\psi\cdot\left(-\pi\left(\left[\begin{smallmatrix} 0&0\\
0&\Ricci_{\mu}
\end{smallmatrix}\right]\right)\right)\mu = -\pi\left(\psi\left[\begin{smallmatrix} 0&0\\ 0&\Ricci_{\mu}
\end{smallmatrix}\right]\psi^{-1}\right)\psi\cdot\mu= -\pi\left(\left[\begin{smallmatrix} 0&0\\ 0&\Ricci_{\lambda}
\end{smallmatrix}\right]\right)\lambda.
$$
But $\lambda(0)=\psi\cdot\mu(0)=\mu(0)$ as $\psi\in\Aut(\ggo,\mu_0)$, so that
$\lambda(t)=\mu(t)$ for all $t$ by uniqueness of the solution.  Thus $\psi\in\Aut(\ggo,\mu(t))$ for all $t$, which implies that $\psi|_{\pg}$ commutes with $\Ricci_{\mu}$
and so
\begin{equation}\label{muk2}
[\ad_{\mu_0}{Z}|_{\pg},\Ricci_{\mu}]= 0,\qquad\forall Z\in\kg.
\end{equation}
It follows from (\ref{BF}) that
$$
\ddt\ad_{\mu}{Z}|_{\pg}= -\ad_{\pi\left(\left[\begin{smallmatrix} 0&0\\ 0&\Ricci_{\mu}
\end{smallmatrix}\right]\right)\mu}{Z}|_{\pg}= [\ad_{\mu}{Z}|_{\pg},\Ricci_{\mu}],
$$
and since the same ODE is satisfied by the constant map $\ad_{\mu_0}{Z}|_{\pg}$, it follows that
\begin{equation}\label{admuz}
\ad_{\mu}{Z}|_{\pg}=\ad_{\mu_0}{Z}|_{\pg}, \qquad\forall t\in (a,b), \quad Z\in\kg.
\end{equation}
Conditions (h3) and (h4) are therefore satisfied by $\mu(t)$.  Finally, since $K_{\mu_0}$ is closed in $G_{\mu_0}$, we get that $K_{\mu}$ is also closed in $G_{\mu}$ as it is the image of $K_{\mu_0}$ by the isomorphism between $G_{\mu_0}$ and $G_{\mu}$ with derivative at the identity of the form $\left[\begin{smallmatrix} I&0\\ 0&h(t)
\end{smallmatrix}\right]$.  This implies that condition (h2) holds, concluding the proof of the lemma.
\end{proof}

We conclude from Lemma \ref{muflow} that a homogeneous space $(G_{\mu(t)}/K_{\mu(t)},g_{\mu(t)})$ can indeed
be associated to each $\mu(t)$ in a bracket flow solution provided that $\mu_0\in\hca_{q,n}$.  We now show that the Ricci flow and the bracket flow are intimately related.

For a given simply connected homogeneous manifold $(M,g_0)=\left(G_{\mu_0}/K_{\mu_0},g_{\mu_0}\right)$, $\mu_0\in\hca_{q,n}$, let us consider the following one-parameter families:
\begin{equation}\label{3rm}
(M,g(t)), \qquad \left(G_{\mu_0}/K_{\mu_0},g_{\ip_t}\right), \qquad \left(G_{\mu(t)}/K_{\mu(t)},g_{\mu(t)}\right),
\end{equation}
where $g(t)$, $\ip_t$ and $\mu(t)$ are the solutions to the Ricci flows (\ref{RF}), (\ref{RFip}) and the bracket flow (\ref{BF}), respectively.  Recall that $\ggo=\kg\oplus\pg$ is a reductive decomposition for any of the homogeneous spaces involved.

\begin{theorem}\label{eqfl}
There exist time-dependent diffeomorphisms $\vp(t):M\longrightarrow G_{\mu(t)}/K_{\mu(t)}$ such that
$$
g(t)=\vp(t)^*g_{\mu(t)}, \qquad\forall t\in (a,b).
$$
Moreover, if we identify $M=G_{\mu_0}/K_{\mu_0}$, then $\vp(t):G_{\mu_0}/K_{\mu_0}\longrightarrow G_{\mu(t)}/K_{\mu(t)}$ can be chosen as the equivariant diffeomorphism determined by the Lie group isomorphism between $G_{\mu_0}$ and $G_{\mu(t)}$ with derivative $\tilde{h}:=\left[\begin{smallmatrix} I&0\\ 0&h \end{smallmatrix}\right]:\ggo\longrightarrow\ggo$, where $h(t):=d\vp(t)|_{o_{\mu_0}}:\pg\longrightarrow\pg$ is the solution to any of the following systems of ODE's:

\begin{itemize}
\item[(i)] $\ddt h=-h\Ricci(\ip_t)$, $\quad h(0)=I$.

\item[(ii)] $\ddt h=-\Ricci_{\mu(t)}h$, $\quad h(0)=I$.
\end{itemize}
The following conditions also hold:
\begin{itemize}
\item[(iii)] $\ip_t=\la h\cdot,h\cdot\ra$.

\item[(iv)] $\mu(t)=\tilde{h}\mu_0(\tilde{h}^{-1}\cdot,\tilde{h}^{-1}\cdot)$.
\end{itemize}
\end{theorem}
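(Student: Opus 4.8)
The plan is to verify the four claimed identities by reducing everything to ODE uniqueness, starting from the $\ip_t$-side (where existence and uniqueness are already in hand from Section \ref{hrf}) and then transporting to the bracket side. First I would define $h(t)$ directly as the solution to system (i), i.e.\ $\ddt h=-h\Ricci(\ip_t)$, $h(0)=I$; this exists on all of $(a,b)$ because it is linear in $h$ with coefficient the smooth curve $\Ricci(\ip_t)$. Then I would check (iii): setting $P(t):=h(t)^th(t)$, differentiate to get $\ddt P = (\ddt h)^t h + h^t\ddt h = -\Ricci(\ip_t)^t h^th - h^th\Ricci(\ip_t) = -2P\Ricci(\ip_t)$, using that $\Ricci(\ip_t)$ is $\ip_t$-self-adjoint so that $h^th\Ricci(\ip_t)$ is $\ip$-symmetric (this is the standard identity $\Ricci(\ip_t)^t h^t h = h^t h \Ricci(\ip_t)$, where ${}^t$ is the fixed-$\ip$ transpose). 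Since $P(0)=I$ and $\la P(t)\cdot,\cdot\ra$ satisfies the same first-order ODE \eqref{RFip} as $\ip_t$, uniqueness gives $\ip_t=\la h\cdot,h\cdot\ra$.

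Next I would establish (iv) and, simultaneously, that the curve $\mu(t):=\tilde h(t)\cdot\mu_0$ (action \eqref{action}, with $\tilde h=\left[\begin{smallmatrix} I&0\\ 0&h\end{smallmatrix}\right]$) solves the bracket flow \eqref{BF}. The point is that $\ddt(\tilde h\cdot\mu_0) = -\pi(\ddt\tilde h\,\tilde h^{-1})(\tilde h\cdot\mu_0)$ for any smooth curve in $\Gl_{q+n}(\RR)$, which follows by differentiating \eqref{action} and using \eqref{actiong}; here $\ddt\tilde h\,\tilde h^{-1}=\left[\begin{smallmatrix} 0&0\\ 0&(\ddt h)h^{-1}\end{smallmatrix}\right]$. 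So I need $(\ddt h)h^{-1}=-\Ricci_{\mu(t)}$, i.e.\ precisely system (ii). This is where the two systems (i) and (ii) must be reconciled, and it is the technical heart of the argument: by Proposition \ref{const}(ii), $\left(G_{\mu(t)}/K_{\mu(t)},g_{\mu(t)}\right)$ is equivariantly isometric to $\left(G_{\mu_0}/K_{\mu_0},g_{\la h\cdot,h\cdot\ra}\right)=\left(G_{\mu_0}/K_{\mu_0},g_{\ip_t}\right)$ via the equivariant diffeomorphism $\vp(t)$ with $d\vp(t)|_{o_{\mu_0}}=h(t)$, and isometries conjugate Ricci operators, so $\Ricci_{\mu(t)} = h(t)\Ricci(\ip_t)h(t)^{-1}$. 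Substituting into (i) gives $\ddt h = -h\Ricci(\ip_t) = -\big(h\Ricci(\ip_t)h^{-1}\big)h = -\Ricci_{\mu(t)}h$, which is (ii). Hence $\mu(t)=\tilde h\cdot\mu_0$ solves \eqref{BF} with $\mu(0)=\mu_0$, and by Lemma \ref{muflow} it stays in $\hca_{q,n}$; uniqueness of the bracket-flow solution then identifies it with the $\mu(t)$ of the theorem, proving (iv).

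It remains to assemble the diffeomorphism statement. Since \eqref{adkh} holds for $h(t)$ and $\mu_0$ (it follows from the $\Ad(K_{\mu_0})$-invariance of $\ip_t$, as noted before Theorem \ref{eqfl}), Proposition \ref{const} applies and gives an equivariant diffeomorphism $\vp(t):G_{\mu_0}/K_{\mu_0}\to G_{\mu(t)}/K_{\mu(t)}$, induced by the Lie group isomorphism $G_{\mu_0}\to G_{\mu(t)}$ whose differential at the identity is $\tilde h(t)$, and such that $\vp(t)^*g_{\mu(t)} = g_{\la h\cdot,h\cdot\ra} = g_{\ip_t}$. Composing with the (time-independent) identification $M=G_{\mu_0}/K_{\mu_0}$ and recalling that $g(t)$ is, by construction in Section \ref{hrf}, the $G_{\mu_0}$-invariant metric with $g(t)(o_{\mu_0})=\ip_t$, we get $g(t)=\vp(t)^*g_{\mu(t)}$ on all of $(a,b)$, which is the main assertion; (i), (ii), (iii), (iv) have all been verified along the way. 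I expect the only genuine obstacle to be bookkeeping with the conjugation $\Ricci_{\mu(t)}=h\Ricci(\ip_t)h^{-1}$: one must be careful that the relevant transpose/adjoint is taken with respect to the moving inner product $\ip_t$ on the $g_{\ip_t}$-side versus the fixed $\ip$ on the bracket side, and that the equivariant isometry of Proposition \ref{const}(ii) does indeed have differential $h(t)$ at the origin so that the conjugation reads exactly as claimed; everything else is routine ODE uniqueness.
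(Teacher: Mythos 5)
Your proposal is correct and follows essentially the same route as the paper: solve (i), obtain (iii) by ODE uniqueness for the inner products, invoke Proposition \ref{const}(ii) to get the conjugation $\Ricci_{\mu(t)}=h\Ricci(\ip_t)h^{-1}$ and hence (ii), and show that $\tilde{h}\cdot\mu_0$ solves the bracket flow so that (iv) follows from uniqueness. The only quibbles are a sign slip in your orbit-derivative formula (the correct identity is $\ddt(\tilde{h}\cdot\mu_0)=+\pi\bigl(\tilde{h}'\tilde{h}^{-1}\bigr)(\tilde{h}\cdot\mu_0)$, which is what actually yields your stated conclusion $h'h^{-1}=-\Ricci_{\mu(t)}$), and that the paper also runs the converse implication starting from (ii), which in your setup is covered by uniqueness of solutions to the linear ODE (ii) once $\mu(t)$ is fixed.
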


\begin{remark}\label{remeq}
Before proceeding with the proof, it is worth pointing out the following useful facts which are direct consequences of the theorem:

\begin{itemize}
\item The Ricci flow $g(t)$ and the bracket flow $g_{\mu(t)}$ differ only by pullback by time-dependent diffeomorphisms.

\item They are equivalent in the following sense: each one can be obtained from the other by solving the corresponding ODE in part (i) or (ii) and applying parts (iv) or (iii), accordingly.

\item The maximal interval of time where a solution exists is therefore the same for both flows.

\item At each time $t$, the Riemannian manifolds in (\ref{3rm}) are all isometric to each other, so that the behavior
of the curvature and of any other Riemannian invariant along the
Ricci flow $g(t)$ can be studied along the bracket flow $g_{\mu(t)}$.

\item If some sequence $\mu(t_k)$ (or a suitable normalization) converges to $\lambda\in\hca_{q,n}$, then we can apply the results described in Section \ref{convergence} to get convergence or subconvergence of the metrics $g_{\mu_k}\to g_{\lambda}$ relative to natural notions of convergence, as infinitesimal, local or pointed.
\end{itemize}
\end{remark}

\begin{proof}
We will first prove that part (i) implies all the other statements in the theorem.  Consider the solution $h=h(t)\in\G$ to the ODE
$$
\ddt h=-h\Ricci(\ip_t),\qquad h(0)=I,
$$
where $\Ricci(\ip_t):=\Ricci(g(t))(0)$, which is defined on the same interval of time as $g(t)$ by a standard result in ODE theory ($h(t)$ is easily seen to be invertible for all $t$).  If $\ipp_t:=\la h(t)\cdot,h(t)\cdot\ra$ and $h':=\ddt h(t)$ then
\begin{align}
\ddt\ipp_t &= \la h'\cdot,h\cdot\ra+\la h\cdot,h'\cdot\ra \notag\\
&= -\la h\Ricci(\ip_t)\cdot,h\cdot\ra -\la h\cdot,h\Ricci(\ip_t)\cdot\ra \label{ddtip}\\
&= -(\Ricci(\ip_t)\cdot,\cdot)_t -(\cdot,\Ricci(\ip_t)\cdot)_t. \notag
\end{align}
On the other hand, since $\Ricci(\ip_t)$ is symmetric with respect to $\ip_t$, it follows from (\ref{RFip}) that $\ip_t$ satisfies
$$
\ddt\ip_t=-2\ricci(\ip_t)=-2\la\Ricci(\ip_t)\cdot,\cdot\ra_t= -\la\Ricci(\ip_t)\cdot,\cdot\ra_t-\la\cdot,\Ricci(\ip_t)\cdot\ra_t.
$$
Thus $\ipp_t$ and $\ip_t$, as curves in the manifold $\G/\Or(n)$ of inner products on $\pg$, satisfy the same ODE and $\ipp_0=\ip_0=\ip$, and so part (iii) follows by uniqueness of the solution.  This also implies that condition (\ref{adkh}) holds for $h(t)$ and $\mu_0$, from we get that
$$
\vp(t):(G_{\mu_0}/K_{\mu_0},g_{\ip_t})\longrightarrow \left(G_{\lambda(t)}/K_{\lambda(t)},g_{\lambda(t)}\right)
$$
is an isometry for the curve $\lambda(t):=\tilde{h}(t).\mu_0$ for all $t$ (see Proposition \ref{const}).  In particular,  $\Ricci_{\lambda(t)}=h(t)\Ricci(\ip_t)h(t)^{-1}$, or equivalently, $h'=-\Ricci_{\lambda(t)}h$, and thus
\begin{align}
\ddt\lambda &= \tilde{h}'\mu_0(\tilde{h}^{-1}\cdot,\tilde{h}^{-1}\cdot) -\tilde{h}\mu_0(\tilde{h}^{-1}\tilde{h}'\tilde{h}^{-1}\cdot,\tilde{h}^{-1}\cdot) -\tilde{h}\mu_0(\tilde{h}^{-1}\cdot,\tilde{h}^{-1}\tilde{h}'\tilde{h}^{-1}\cdot) \notag\\
&= (\tilde{h}'\tilde{h}^{-1})\tilde{h}\mu_0(\tilde{h}^{-1}\cdot,\tilde{h}^{-1}\cdot)-\tilde{h}\mu_0(\tilde{h}^{-1}(\tilde{h}'\tilde{h}^{-1})\cdot,\tilde{h}^{-1}\cdot) -\tilde{h}\mu_0(\tilde{h}^{-1}\cdot,\tilde{h}^{-1}(\tilde{h}'\tilde{h}^{-1})\cdot) \label{ddtmu}\\
&= \pi(\tilde{h}'\tilde{h}^{-1})\lambda= -\pi\left(\left[\begin{smallmatrix} 0&0\\ 0&\Ricci_{\lambda} \notag
\end{smallmatrix}\right]\right)\lambda.
\end{align}
The curve $\lambda(t)$ is therefore a solution to the bracket flow and since $\lambda(0)=\mu_0$, we obtain that $\mu(t)=\lambda(t)$ for all $t$, from which parts (ii) and (iv) follow.  We also deduce that
$$
\vp(t):(M,g(t))=(G_{\mu_0}/K_{\mu_0},g_{\ip_t})\longrightarrow (G_{\mu(t)}/K_{\mu(t)},g_{\mu(t)})
$$
is an isometry.

Let us now assume that part (ii) holds, and so $h(t)$ is defined on the same time interval as $g(t)$.  By carrying out the same computation as in (\ref{ddtmu}) for $\mu$, we get that $\tilde{h}(t)\cdot\mu_0$ is a solution to the bracket flow starting at $\mu_0$.  Thus $\tilde{h}(t)\cdot\mu_0=\mu(t)\in\hca_{q,n}$ for all $t$ (i.e. part (iv) holds), from which easily follows that $h(t)$ satisfies (\ref{adkh}) and therefore $\ipp_t:=\la h\cdot,h\cdot\ra$ defines a $G_{\mu_0}$-invariant metric on $G_{\mu_0}/K_{\mu_0}$ for all $t$.  Moreover, we have that
$$
\vp(t):(G_{\mu_0}/K_{\mu_0},g_{\ipp_t})\longrightarrow \left(G_{\mu(t)}/K_{\mu(t)},g_{\mu(t)}\right)
$$
is an isometry for all $t$ (see Proposition \ref{const}), and so  $\Ricci_{\mu(t)}=h(t)\Ricci(\ipp_t)h(t)^{-1}$, or equivalently, $h'=-h\Ricci(\ipp_t)$.  This implies that $\ipp_t$ is a solution to the Ricci flow (\ref{RFip}) by arguing as in (\ref{ddtip}), and consequently, $\ipp_t=\ip_t$ for all $t$.  In this way, parts (i) and (iii) follow, concluding the proof of the theorem.
\end{proof}

Since $\mu(t)|_{\kg\times\ggo}=\mu_0|_{\kg\times\ggo}$ for all $t\in (a,b)$ (see (\ref{muk})
and (\ref{admuz})), we have that only $\mu(t)|_{\pg\times\pg}$ is actually evolving, and so the bracket flow equation (\ref{BF}) can be rewritten as the simpler system
\begin{equation}\label{BFsis}
\left\{\begin{array}{ll}
\ddt\mu_{\kg}=\mu_{\kg}(\Ricci_{\mu}\cdot,\cdot)+\mu_{\kg}(\cdot,\Ricci_{\mu}\cdot), & \\
& \mu_{\kg}(0)+\mu_{\pg}(0)=\mu_0|_{\pg\times\pg},\\
\ddt\mu_{\pg}=-\pi_n(\Ricci_{\mu})\mu_{\pg}, &
\end{array}\right.
\end{equation}
where $\mu_{\kg}$ and $\mu_{\pg}$ are the components of $\mu|_{\pg\times\pg}$ as in (\ref{decmu}) and $\pi_n:\glg_n(\RR)\longrightarrow\End(V_n)$ is the  representation defined in (\ref{actiong}) for $q=0$.

\begin{example}\label{ex0-3BF}
Let $\mu=\mu_{a,b,c}$ be the Lie bracket in $\hca_{0,3}=\lca_3$ defined by
$$
\mu(X_2,X_3)=aX_1, \qquad \mu(X_3,X_1)=bX_2, \qquad \mu(X_1,X_2)=cX_3.
$$
If follows from the formula for the Ricci operator $\Ricci_\mu$ given in Example \ref{ex0-3ricci} that this family is invariant under the bracket flow, which is equivalent to the following ODE system for the variables $a(t),b(t),c(t)$:
$$
\left\{\begin{array}{l}
\ddt a= \left(-\unm(3a^2-b^2-c^2)+ab+ac-bc\right)a, \\ \\
\ddt b= \left(-\unm(3b^2-a^2-c^2)+ab-ac+bc\right)b,  \\ \\
\ddt c= \left(-\unm(3c^2-a^2-b^2)-ab+ac+bc\right)c.
\end{array}\right.
$$
We refer to \cite{GlcPyn} for a complete qualitative study of this dynamical system, including some nice phase plane pictures.  One can use Theorem \ref{convmu3} to get  subsequences which are convergent in the pointed sense from all convergent (normalized) bracket flow solutions obtained in \cite{GlcPyn}, and even smooth convergence in many cases.
\end{example}

\begin{example}\label{ex1-3BF}
The Ricci operator of the bracket $\mu=\mu_{a,b,c}\in\hca_{1,3}$ given by
$$
\left\{\begin{array}{lll}
\mu(X_3,Z_1)=X_2, & \mu(X_2,X_3)=aX_1+bZ_1, & \mu(X_3,X_1)=cX_2, \\
\mu(Z,X_2)=X_3, &                           & \mu(X_1,X_2)=cX_3,
\end{array}\right.
$$
was computed in Example \ref{ex1-3ricci}.  It is easy to see that the bracket flow leaves this family invariant and is equivalent to the ODE system
$$
\left\{\begin{array}{l}
\ddt a=(-\tfrac{3}{2}a^2+2b+2ac)a, \\ \\
\ddt b=(-a^2+2b+2ac)b, \\ \\
\ddt c=\unm a^2c.
\end{array}\right.
$$
Notice that $\mu_{\kg}$ is given by $\mu_{\kg}(X_2,X_3)=bZ_1$, and $a,c$ are the structural constants of $\mu_{\pg}$.  It follows at once from the equations that all the coordinate axes and planes are invariant by the flow, and it is also easy to see that if $b\ne 0$, then $\frac{ac}{b}\equiv constant$, which gives rise to more invariant subsets.  The case $c=0$ will be studied in detail in Section \ref{exdim3}, where some interesting convergence features appear (see \cite{Arr} for a qualitative analysis of all possible cases).
\end{example}

\subsection{Some evolution equations along the bracket flow}
We study in this section how the different parts of the Ricci curvature of $g_{\mu(t)}$ evolve along the bracket flow (see Section \ref{riccisec} for most of the notation used in what follows).

\begin{lemma}\label{prop}
For any $\mu\in\hca_{q,n}$, the following conditions hold:
\begin{itemize}
\item[(i)] If $\delta_{\mu_{\pg}}:\glg_n(\RR)\longrightarrow\lamp$ is defined by $\delta_{\mu_{\pg}}(E)=-\pi(E)\mu_{\pg}$, then
$$
\delta_{\mu_{\pg}}(I)=\mu_{\pg}, \qquad \delta_{\mu_{\pg}}^t(\mu_{\pg})=-4\mm_{\mu_{\pg}},
$$
where $\delta_{\mu_{\pg}}^t:\lamp\longrightarrow\glg_n(\RR)$ is the transpose of $\delta_{\mu_{\pg}}$.
\item[ ]
\item[(ii)] $\tr{\mm_{\mu_{\pg}}}=-\unc\|\mu_{\pg}\|^2$.
\item[ ]

\item[(iii)] $\tr{\mm_{\mu_{\pg}}D}=\tr{B_{\mu}D}=0$ for any $\left[\begin{smallmatrix} 0&0\\ 0&D
\end{smallmatrix}\right]\in\Der(\mu)$.
\item[ ]

\item[(iv)] $B_{\tilde{h}\cdot\mu}=(h^{-1})^tB_{\mu}h^{-1}$ for any $\tilde{h}=\left[\begin{smallmatrix} I&0\\ 0&h
\end{smallmatrix}\right]\in\Gl_{q+n}(\RR)$, $h\in\Gl_n(\RR)$.
\item[ ]

\item[(v)] $H_{h\cdot\mu_{\pg}}=(h^{-1})^t(H_{\mu_{\pg}})$ for any $h\in\Gl_n(\RR)$.
\item[ ]

\item[(vi)] $\mu(Z,H_{\mu_{\pg}})=0$ for any $Z\in\kg$.
\item[ ]

\item[(vii)] $[\ad_{\mu}{Z}|_{\pg},\Ricci_\mu]=[\ad_{\mu}{Z}|_{\pg},\mm_{\mu_{\pg}}]=[\ad_{\mu}{Z}|_{\pg}, B_{\mu}]=[\ad_{\mu}{Z}|_{\pg},S(\ad_{\mu_{\pg}}{H_{\mu_{\pg}}})]=0$, for any $Z\in\kg$.
\end{itemize}
\end{lemma}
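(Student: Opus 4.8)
The plan is to establish the seven items essentially by differentiating invariance identities, after isolating three elementary facts. The first is the transformation rule for the adjoint operator under the action (\ref{action}), namely $\ad_{\tilde h\cdot\mu}X=\tilde h\,\ad_{\mu}(\tilde h^{-1}X)\,\tilde h^{-1}$ together with its $q=0$ analogue $\ad_{h\cdot\mu_{\pg}}X=h\,\ad_{\mu_{\pg}}(h^{-1}X)\,h^{-1}$, so that any trace of a product of $\ad$-operators is transported by conjugation. The second is the moment-map characterization (\ref{Rmm}) of $\mm_{\mu_{\pg}}$, combined with $\pi(I)\mu_{\pg}=-\mu_{\pg}$ (read off directly from (\ref{actiong})). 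The third, which drives (vi)--(vii), is that for $Z\in\kg$ the operator $\ad_{\mu}Z$ is block-diagonal with respect to $\ggo=\kg\oplus\pg$ by (h1) and skew-symmetric on $\pg$ by (h3), so that $e^{s\ad_{\mu}Z}=\Ad_{G_{\mu}}(\exp_{\mu}sZ)$ lies in $\Aut(\ggo,\mu)$, preserves the decomposition, and restricts to an orthogonal map $e^{s\ad_{\mu}Z|_{\pg}}$ of $(\pg,\ip)$.

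Parts (i) and (ii) are immediate: $\pi(I)\mu_{\pg}=-\mu_{\pg}$ gives $\delta_{\mu_{\pg}}(I)=\mu_{\pg}$, while pairing $\delta_{\mu_{\pg}}^t(\mu_{\pg})$ against an arbitrary $E\in\glg_n(\RR)$ and invoking (\ref{Rmm}) yields $\la\delta_{\mu_{\pg}}^t(\mu_{\pg}),E\ra=-\la\pi(E)\mu_{\pg},\mu_{\pg}\ra=-4\tr\mm_{\mu_{\pg}}E$; since $\mm_{\mu_{\pg}}$ is symmetric this forces $\delta_{\mu_{\pg}}^t(\mu_{\pg})=-4\mm_{\mu_{\pg}}$, and the choice $E=I$ gives $\|\mu_{\pg}\|^2=-4\tr\mm_{\mu_{\pg}}$, which is (ii). For (iv) and (v) I would write out $\tr(\ad_{\tilde h\cdot\mu}X\,\ad_{\tilde h\cdot\mu}Y)$ and $\tr\ad_{h\cdot\mu_{\pg}}X$ using the first basic fact (noting $\tilde h^{-1}X=h^{-1}X$ on $\pg$) and cyclicity of the trace, then compare with the defining identities $\la B_{\mu}X,Y\ra=\tr(\ad_{\mu}X\ad_{\mu}Y)$ and $\la H_{\mu_{\pg}},X\ra=\tr\ad_{\mu_{\pg}}X$ to read off the conjugation formulas.

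For (iii) I would first observe that if $\left[\begin{smallmatrix} 0&0\\ 0&D\end{smallmatrix}\right]\in\Der(\mu)$ then $D\in\Der(\mu_{\pg})$: project the derivation identity for $X,Y\in\pg$ onto $\pg$ and use $\mu(\kg,\kg)\subset\kg$, $\mu(\kg,\pg)\subset\pg$; hence $\tr\mm_{\mu_{\pg}}D=\unc\la\pi(D)\mu_{\pg},\mu_{\pg}\ra=0$. For the Killing form, a derivation satisfies $\ad_{\mu}(DX)=[D,\ad_{\mu}X]$ as operators, so by cyclicity $B_{\mu}D+D^tB_{\mu}=0$ on $\pg$, whence $B_{\mu}D$ is skew-symmetric and traceless. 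Parts (vi) and (vii) then run through the third fact: with $D:=\ad_{\mu}Z|_{\pg}$, the projection-of-Jacobi argument shows $D\in\Der(\mu_{\pg})$ too, so $e^{sD}\in\Aut(\mu_{\pg})\cap\Or(n)$; using that $\mm$ is $\Or(n)$-equivariant (as recorded in the proof of Lemma~\ref{muflow}), that automorphisms preserve the Killing form, and (v) with an orthogonal $h$, one gets $\mm_{\mu_{\pg}}=e^{sD}\mm_{\mu_{\pg}}e^{-sD}$, $B_{\mu}=e^{-sD}B_{\mu}e^{sD}$ and $H_{\mu_{\pg}}=e^{sD}H_{\mu_{\pg}}$; differentiating at $s=0$ gives $[D,\mm_{\mu_{\pg}}]=[D,B_{\mu}]=0$ and $DH_{\mu_{\pg}}=0$, the last being (vi). Finally $[D,\ad_{\mu_{\pg}}H_{\mu_{\pg}}]=\ad_{\mu_{\pg}}(DH_{\mu_{\pg}})=0$ by the derivation property, and $[D,S(A)]=S([D,A])$ for skew $D$, so $[D,S(\ad_{\mu_{\pg}}H_{\mu_{\pg}})]=0$; then $[D,\Ricci_{\mu}]=0$ follows from (\ref{ricci2}).

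The only point requiring genuine care is in (vi)--(vii): one must verify that the $\pg$-block $\ad_{\mu}Z|_{\pg}$ is a derivation of the \emph{truncated} bracket $\mu_{\pg}$, not merely that $\ad_{\mu}Z$ is a derivation of $\mu$. This rests on $\Ad(K_{\mu})$ preserving the decomposition $\ggo=\kg\oplus\pg$ (equivalently on (h1)), which makes $\mu_{\kg}$ and $\mu_{\pg}$ separately $\ad_{\mu}Z$-stable. Once this is in hand, every assertion is obtained by differentiating an invariance identity and all remaining computations are routine.
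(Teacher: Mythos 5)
Your proof is correct and follows essentially the same route as the paper's (which is only sketched there): parts (i)--(v) from the definitions together with the moment-map identity (\ref{Rmm}), and parts (vi)--(vii) from the fact that $e^{s\ad_{\mu}Z}$ is an automorphism of $(\ggo,\mu)$ preserving $\ggo=\kg\oplus\pg$ and acting orthogonally on $\pg$. The only cosmetic difference is in (vi), which the paper obtains from the one-line computation $\la\mu(Z,H_{\mu_{\pg}}),X\ra=-\la H_{\mu_{\pg}},\mu(Z,X)\ra=-\tr{[\ad_{\mu}Z,\ad_{\mu}X]}=0$ rather than by differentiating the invariance of $H$ as you do.
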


\begin{proof}
The proofs of parts (i)-(v) easily follow by only using the definitions and (\ref{Rmm}), and the vanishing of all the brackets in (vii) can be proved by using that $e^{t\ad_{\mu}{Z}}$ is an automorphism of $(\ggo,\mu)$ which is orthogonal on $\pg$ for all $t$ and so it is easily seen to commute with all the operators on the right in each bracket.

Finally, part (vi) follows from the fact that
$$
\la\mu(Z,H_{\mu_{\pg}}),X\ra=-\la H_{\mu_{\pg}},[Z,X]\ra=-\tr{[\ad_{\mu}{Z},\ad_{\mu}{X}]}=0, \qquad\forall X\in\pg,
$$
concluding the proof of the lemma.
\end{proof}

Let us also denote by $\mm$, $B$ and $H$ the maps defined by
$$
\begin{array}{c}
\mm:\lamp\longrightarrow\glg_n(\RR), \qquad B:\lamg\longrightarrow\glg_n(\RR)\qquad H:\lamp\longrightarrow\pg, \\ \\
\mm(\lambda):=\mm_{\lambda}, \qquad B(\lambda):=B_{\lambda}, \qquad H(\lambda):=H_{\lambda}.
\end{array}
$$
If $E\in\glg_n(\RR)$ satisfies $E^t=E$, then it
follows from (\ref{Rmm}) and (\ref{actiong}) that
\begin{align*}
\la d \mm|_{\lambda}\delta_{\lambda}(A),E\ra &= \la\ddt|_0
\mm_{e^{-tA}.\lambda},E\ra
= \unc\ddt|_0\la\pi_n(E)e^{-tA}.\lambda,e^{-tA}.\lambda\ra \\
&= \unm\la\pi_n(E)\lambda,\delta_{\lambda}(A)\ra =
-\unm\la\delta_{\lambda}^t\delta_{\lambda}(A),E\ra.
\end{align*}
This implies that
\begin{equation}\label{dR}
d \mm|_{\lambda}\delta_{\lambda}(A)=-\unm\Delta_{\lambda}(A), \qquad\forall
A\in\glg_n(\RR), \quad \lambda\in\lamp,
\end{equation}
where
$$
\Delta_{\lambda}:=S\circ\delta_{\lambda}^t\delta_{\lambda}:\glg_n(\RR)\longrightarrow\glg_n(\RR),
$$
and $S$ is as in $(\ref{sym})$.  For the function $B$ we use Lemma \ref{prop}, (iv) to easily obtain that
\begin{equation}\label{dB}
d B|_{\lambda}\delta_{\lambda}(\tilde{A})=A^tB_{\lambda}+B_{\lambda}A,  \qquad\forall
A\in\glg_n(\RR), \quad \lambda\in\lamg,
\end{equation}
where we set $\tilde{A}=\left[\begin{smallmatrix} 0&0\\ 0&A
\end{smallmatrix}\right]\in\glg_{q+n}(\RR)$ for any $A\in\glg_n(\RR)$, that is, $\tilde{A}|_{\kg}=0$, $\tilde{A}|_{\pg}=A$.  Concerning $H$, it follows from Lemma \ref{prop}, (v) that
\begin{equation}\label{dH}
d H|_{\lambda}\delta_{\lambda}(A)=A^t(H_{\lambda}),  \qquad\forall A\in\glg_n(\RR), \quad \lambda\in\lamp.
\end{equation}

Let $\mu(t)\in\hca_{q,n}$ be a solution to the bracket flow (\ref{BF}).  We introduce the following notation in
order to simplify the formulas for the ODE's we need to study:
\begin{equation}\label{not}
\begin{array}{lllll}
\Ricci(t):=\Ricci_{\mu(t)}, && \mm(t):=M_{\mu_{\pg}(t)}, && B(t):=B_{\mu(t)}, \\ \\
H(t):=H_{\mu_{\pg}(t)}, && U(t):=S(\ad_{\mu_{\pg}(t)}{H(t)}), &&
\scalar(t):=\tr{\Ricci(t)}, \\ \\
\Delta(t):=\Delta_{\mu_{\pg}(t)}. &&
\end{array}
\end{equation}

\begin{proposition}\label{eqs}
The bracket flow equation {\rm (\ref{BF})} for $\mu(t)$ produces the following evolution equations:
\begin{itemize}
\item[(i)] $\ddt\Ricci=-\unm\Delta(\Ricci)-\unm(B\Ricci+\Ricci B)- 2S(\ad_{\mu_{\pg}}{\Ricci(H)}) - S([\ad_{\mu_{\pg}}{H},\Ricci])$.
\item[ ]
\item[(ii)] $\ddt\mm=-\unm\Delta(\Ricci)$.
\item[ ]
\item[(iii)] $\ddt B=B\Ricci+\Ricci B$.
\item[ ]
\item[(iv)] $\ddt H=\Ricci(H)$.
\item[ ]
\item[(v)] $\ddt U= 2S(\ad_{\mu_{\pg}}{\Ricci(H)}) + S([\ad_{\mu_{\pg}}{H},\Ricci])$.
\item[ ]
\item[(vi)] $\ddt\scalar=2\tr{\Ricci^2}=2\|\Ricci\|^2$.
\item[ ]
\item[(vii)] $\ddt\|\mu_{\pg}\|^2=-8\tr{\Ricci\mm}$.
\item[ ]
\item[(viii)] $\ddt\tr{B}=2\tr{\Ricci B}$.
\item[ ]
\item[(ix)] $\ddt\| H\|^2=-2\tr{S(\ad_{\mu_{\pg}}{H})^2}$.
\end{itemize}

\end{proposition}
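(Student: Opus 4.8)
The plan is to read all nine equations off the single ODE $\ddt\mu_\pg=-\pi_n(\Ricci)\mu_\pg=\delta_{\mu_\pg}(\Ricci)$ that $\mu_\pg$ obeys along the bracket flow (Lemma \ref{muflow} and (\ref{BFsis}); here $\Ricci:=\Ricci_{\mu(t)}$ is $\ip$-symmetric and $\mu(t)|_{\kg\times\ggo}$ stays constant), using the chain rule, the derivative formulas (\ref{dR}), (\ref{dB}), (\ref{dH}) for the maps $\mm$, $B$, $H$, and the decomposition $\Ricci=\mm_{\mu_\pg}-\unm B_\mu-S(\ad_{\mu_\pg}H)$.

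Since $\mm$ and $H$ depend only on $\mu_\pg$, applying (\ref{dR}) and (\ref{dH}) with $A=\Ricci$ gives at once $\ddt\mm=d\mm|_{\mu_\pg}\delta_{\mu_\pg}(\Ricci)=-\unm\Delta(\Ricci)$ and $\ddt H=\Ricci^t(H)=\Ricci(H)$, i.e. (ii) and (iv); applying (\ref{dB}) with $A=\Ricci$ (or, equivalently, differentiating $B(t)=(h(t)^{-1})^tB_{\mu_0}h(t)^{-1}$, which comes from Lemma \ref{prop}, (iv) and Theorem \ref{eqfl}, (iv), together with $\ddt h=-\Ricci h$ of Theorem \ref{eqfl}, (ii)) gives $\ddt B=\Ricci B+B\Ricci$, i.e. (iii). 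For (v) I would differentiate $\ad_{\mu_\pg}H$ directly: by the Leibniz rule $\ddt(\ad_{\mu_\pg}H)(X)=(\ddt\mu_\pg)(H,X)+\mu_\pg(\Ricci(H),X)$, and expanding $(\ddt\mu_\pg)(H,X)=-\Ricci\,\mu_\pg(H,X)+\mu_\pg(\Ricci H,X)+\mu_\pg(H,\Ricci X)$ via (\ref{actiong}) and regrouping yields $\ddt(\ad_{\mu_\pg}H)=[\ad_{\mu_\pg}H,\Ricci]+2\,\ad_{\mu_\pg}(\Ricci H)$; composing with $S$ (which commutes with $\ddt$) gives (v). Then (i) follows by differentiating $\Ricci=\mm_{\mu_\pg}-\unm B_\mu-S(\ad_{\mu_\pg}H)$ and substituting (ii), (iii), (v).

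The scalar identities come from differentiating norms and taking traces. For (vii), $\ddt\|\mu_\pg\|^2=2\la\ddt\mu_\pg,\mu_\pg\ra=-2\la\pi_n(\Ricci)\mu_\pg,\mu_\pg\ra=-8\tr(\Ricci\mm_{\mu_\pg})$ by (\ref{Rmm}); (viii) is the trace of (iii). For (ix), (iv) gives $\ddt\|H\|^2=2\la\Ricci(H),H\ra$, so one must prove the pointwise identity $\la\Ricci_\mu H,H\ra=-\tr(S(\ad_{\mu_\pg}H)^2)$. For this I would use $\la S(\ad_{\mu_\pg}H)H,H\ra=\la\mu_\pg(H,H),H\ra=0$; the fact that $\ad_\mu H$ kills $\kg$ (Lemma \ref{prop}, (vi)), which makes $\la B_\mu H,H\ra=\tr_{\ggo}((\ad_\mu H)^2)=\tr((\ad_{\mu_\pg}H)^2)$; and the orthogonality $H\perp\mu_\pg(\pg,\pg)$, since $\la H,\mu_\pg(X,Y)\ra=\tr_{\ggo}(\ad_\mu(\mu_\pg(X,Y)))=\tr([\ad_\mu X,\ad_\mu Y])-\tr_{\ggo}(\ad_\mu(\mu_\kg(X,Y)))=0$ (the last trace vanishing because $\ad_\mu Z|_\pg$ is skew and $\kg$ is a compact, hence unimodular, Lie algebra). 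With this orthogonality, (\ref{R}) reduces to $\la\mm_{\mu_\pg}H,H\ra=-\unm\|\ad_{\mu_\pg}H\|^2$, and then $\la\Ricci H,H\ra=-\unm\|\ad_{\mu_\pg}H\|^2-\unm\tr((\ad_{\mu_\pg}H)^2)=-\tr(S(\ad_{\mu_\pg}H)^2)$, as needed.

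Finally (vi): the quickest route is to invoke Theorem \ref{eqfl} together with the standard evolution of scalar curvature under the Ricci flow, whose Laplacian term vanishes by homogeneity, giving $\ddt\scalar=2\|\Ricci\|^2$; alternatively one takes the trace of (i), using $\tr\circ S=\tr$, $\tr([\,\cdot,\cdot\,])=0$, and $\tr(\Delta_\lambda(A))=\la\delta_\lambda(A),\lambda\ra=-4\tr(\mm_\lambda A)$ (from Lemma \ref{prop}, (i) and (\ref{Rmm})). The routine parts are (ii)--(iv), (vii), (viii); the computational heart is the differentiation in (v)---hence (i)---where the $\pi_n$-action must be split carefully into a commutator piece and an ``$\ad$ of $\Ricci(H)$'' piece, and the only place requiring extra structural input is the pointwise identity behind (ix), which rests on the unimodularity of $\kg$.
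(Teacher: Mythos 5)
Your proposal is correct and follows essentially the same route as the paper: parts (ii)--(iv) from the derivative formulas (\ref{dR}), (\ref{dB}), (\ref{dH}), part (v) by Leibniz differentiation of $\ad_{\mu_{\pg}}H$ split into a commutator piece and an $\ad_{\mu_{\pg}}(\Ricci H)$ piece, part (i) by summation, and the scalar identities via the pointwise relation $\la\Ricci(H),H\ra=-\tr S(\ad_{\mu_{\pg}}H)^2$, which the paper also establishes (somewhat more tersely) from the structure of $\mm$ and $B$. Your treatment of (ix) is in fact slightly more explicit than the paper's, spelling out why $\la H,\mu_{\pg}(\pg,\pg)\ra=0$ via unimodularity of $\kg$ and why $\tr_{\ggo}(\ad_{\mu}H)^2$ reduces to $\tr(\ad_{\mu_{\pg}}H)^2$, and your alternative derivation of (vi) from the general Ricci flow evolution of $\scalar$ is the observation the paper itself makes in the remarks following the proposition.
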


\begin{proof}
We use (\ref{dR}) in the last equality of the following line to prove part (ii),
$$
\ddt\mm= d \mm|_{\mu_{\pg}}\ddt\mu_{\pg} = d \mm|_{\mu_{\pg}}\delta_{\mu_{\pg}}(\Ricci) = -\unm\Delta(\Ricci),
$$
and parts (iii) and (iv) follow similarly by using (\ref{dB}) and (\ref{dH}), respectively.

It follows from part (iv) and the fact that $S$ and $\ad$ are linear that
\begin{align*}
\ddt U &= S\left(\ddt \ad_{\mu_{\pg}}{H}\right) = S\left(\ad_{\delta_{\mu_{\pg}}}(\Ricci){H} + \ad_{\mu_{\pg}}{\Ricci(H)}\right) \\
&= S\left(\ad_{\mu_{\pg}}{\Ricci(H)} + \ad_{\mu_{\pg}}{H}\circ\Ricci - \Ricci\circ\ad_{\mu_{\pg}}{H} + \ad_{\mu_{\pg}}{\Ricci(H)}\right) \\
&= 2S(\ad_{\mu_{\pg}}{\Ricci(H)}) + S([\ad_{\mu_{\pg}}{H},\Ricci]),
\end{align*}
which proves part (v).  It is clear that part (i) follows from part (ii), (iii) and (v).

We have that
\begin{align*}
\la\Ricci(H),H\ra &= \la\mm(H),H\ra-\unm\la B(H),H\ra  \\
&= -\unm\tr{(\ad_{\mu_{\pg}}{H})^t\ad_{\mu_{\pg}}{H}} + \unc\sum\tr{\ad_{\mu}{\mu(X_i,X_j)}} -\unm\tr{(\ad_{\mu_{\pg}}{H})^2} \\
&= -\tr{S(\ad_{\mu_{\pg}}{H})\ad_{\mu_{\pg}}{H}} = -\tr{S(\ad_{\mu_{\pg}}{H})^2},
\end{align*}
and thus by Lemma \ref{prop}, (iii), we get
\begin{equation}\label{RicH}
\la\Ricci(H),H\ra = -\tr{S(\ad_{\mu_{\pg}}{H})^2} = \la\Ricci,S(\ad_{\mu_{\pg}}{H})\ra.
\end{equation}

We now use part (i) and (\ref{RicH}) to prove (iv) as follows:
\begin{align*}
\ddt R =& \tr{\ddt \Ricci} = -\unm\tr{\delta_{\mu_{\pg}}^t\delta_{\mu_{\pg}}(\Ricci)} -2\tr{B\Ricci} -2\tr{\ad_{\mu_{\pg}}{\Ricci(H)}} \\
=& -\unm\la\Ricci,\delta_{\mu_{\pg}}^t(\mu_{\pg})\ra -2\la\Ricci,B\ra -2\la\Ricci(H),H\ra \\
=&  2\la\Ricci,\mm\ra-2\la\Ricci,B\ra-2\la\Ricci,S(\ad_{\mu_{\pg}}{H})\ra =2\tr{\Ricci^2}.
\end{align*}

By using (ii) we obtain part (vii):
$$
\ddt\|\mu_{\pg}\|^2= 2\la\mu_{\pg},\ddt\mu_{\pg}\ra =2\la\mu_{\pg},\delta_{\mu_{\pg}}(\Ricci)\ra = -8\tr{\Ricci\mm}.
$$
Finally, parts (viii) and (x) follow from (iii) and (iv), (\ref{RicH}), respectively, concluding the proof of the proposition.
\end{proof}

A few comments are in order here:

\begin{itemize}
\item In the general case, along a Ricci flow solution $g(t)$, the scalar curvature $R=R(g(t))$ evolves according to
$$
\dpar R=\Delta(R)+2\|\ricci\|^2,
$$
where $\Delta$ is the Laplacian of $(M,g(t))$ (see e.g. \cite[Lemma 6.7]{ChwKnp}).  Since at each fixed time $R$ is constant as a function on $M$ for homogeneous manifolds, one obtains the evolution given in Proposition \ref{eqs}, (vi).  In particular, the scalar curvature is always strictly increasing in the homogeneous case, unless $g(t)\equiv g_0$ is flat (recall that a homogeneous manifold is flat if and only if it is Ricci flat; see \cite{AlkKml}).

\item If $\mu(t)\to\lambda\in\hca_{q,n}$, as $t\to\infty$, then $R(t)\to R(\lambda)$ (increasing) and so $\Ricci_\lambda=0$, that is, $(G_\lambda/K_\lambda,g_\lambda)$ is flat.

\item It follows from Proposition \ref{eqs}, (ix) that homogeneous spaces become `more unimodular' (i.e. $H=0$) while evolve by the Ricci flow.
\end{itemize}

\subsection{Normalized flows}\label{norm}
Let $(M,g_0)$ be a Riemannian manifold.  By rescaling the metric and reparametrizing the time
variable $t$, one can transform the Ricci flow (\ref{RF}) into an {\it $r$-normalized Ricci flow}
\begin{equation}\label{RFrn}
\dpar g^r(t)=-2\ricci(g^r(t))-2r(t)g^r(t),\qquad g^r(0)=g_0,
\end{equation}
for some {\it normalization function} $r(t)$ which may depend on $g^r(t)$.  It is easy to see that a solution has the form $g^r(t)=a(t)g(b(t))$ with $g(s)$ the Ricci flow and thus a scalar Riemannian invariant may remain constant or bounded in time as a result of an appropriate choice of the function $r(t)$.  Normalizations are therefore very useful to prevent a Ricci flow solution from a finite-time singularity as well as from converging to a flat metric.  Usually, the challenge is to be able to prevent both by using the same normalization.

In the case $(M,g)$ is homogeneous, say $(M,g)=\left(G_{\mu_0}/K_{\mu_0},g_{\mu_0}\right)$, $\mu_0\in\hca_{q,n}$, we have that the flow (\ref{RFrn}) is equivalent to
\begin{equation}\label{RFiprn}
\ddt\ip^r_t=-2\ricci(\ip^r_t)-2r(t)\ip^r_t, \qquad \ip^r_0=\ip.
\end{equation}
This motivates the definition of the {\it $r$-normalized bracket flow} for $\mu^r=\mu^r(t)$ and $r=r(t)$ by
\begin{equation}\label{BFrn}
\ddt\mu^r=-\pi\left(\left[\begin{smallmatrix} 0&0\\ 0&\Ricci_{\mu^r}+rI
\end{smallmatrix}\right]\right)\mu^r, \qquad \mu^r(0)=\mu_0,
\end{equation}
or equivalently, as the system analogous to (\ref{BFsis}) given by
\begin{equation}\label{BFrnsis}
\left\{\begin{array}{ll}
\ddt\mu^r_{\kg}=\mu^r_{\kg}(\Ricci_{\mu^r}\cdot,\cdot)+\mu^r_{\kg}(\cdot,\Ricci_{\mu^r}\cdot) +2r\mu_{\kg}^r(\cdot,\cdot), & \\
& \mu^r_{\kg}(0)+\mu^r_{\pg}(0)=\mu_0|_{\pg\times\pg},\\
\ddt\mu^r_{\pg}=-\pi_n(\Ricci_{\mu^r}+rI)\mu^r_{\pg} =-\pi_n(\Ricci_{\mu^r})\mu^r_{\pg} +r\mu_{\pg}^r. &
\end{array}\right.
\end{equation}

Given a continuous (or just integrable) normalization function $r$, we consider the solutions to the ODE's $\{ c'=rc, \; c(0)=1\}$ and $\{ \tau'=c^2,\; \tau(0)=0\}$, that is,
\begin{equation}\label{defctau}
r\quad\rightsquigarrow \quad c(t):=e^{\int_0^t r(s)\; ds}, \quad \tau(t)=\int_0^t c^2(s)\; ds.
\end{equation}
Notice that $c(t)>0$ and $\tau'(t)>0$ for all $t$.  If $g(t)$, $\ip_t$ and $\mu(t)$ denote the (unnormalized) Ricci and bracket flows (\ref{RF}), (\ref{RFip}) and (\ref{BF}), respectively, then it is straightforward to check that
\begin{equation}\label{rnsol}
\begin{array}{c}
g^r(t)=\frac{1}{c^2(t)}g(\tau(t)), \quad \ip^r_t=\frac{1}{c^2(t)}\ip_{\tau(t)}, \\ \\ \mu^r(t)=c(t)\cdot\mu(\tau(t)), \qquad\mbox{i.e.}\quad \mu^r_{\kg}(t)=c^2(t)\mu_{\kg}(\tau(t)), \quad \mu^r_{\pg}(t)=c(t)\mu_{\pg}(\tau(t)).
\end{array}
\end{equation}

Let $[0,T)$ be the maximal interval of forward time for the bracket flow solution $\mu(t)$, with $T\in\RR_{>0}\cup\{\infty\}$.

\begin{lemma}\label{maxtime}
If the $r$-normalized bracket flow solution $\mu^r(t)$ is defined for $t\in[0,\infty)$ and $\tau(t)\to T_1<T$, as $t\to\infty$, then $\mu^r(t)\to 0$, as $t\to\infty$.
\end{lemma}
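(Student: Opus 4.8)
The plan is to reduce the statement to the single assertion $c(t)\to 0$ as $t\to\infty$, and to extract this from the integrability $\int_0^\infty c^2(s)\,ds<\infty$ which the hypothesis $\tau(t)\to T_1$ forces. First I would record two consequences of the hypotheses. Since $\tau'(t)=c^2(t)>0$, the map $\tau$ is a strictly increasing diffeomorphism of $[0,\infty)$ onto $[0,T_1)$; as $T_1<T$, the point $\tau(t)$ stays in the interior of the maximal interval of the (unnormalized) bracket flow, so $\mu(\tau(t))\to\mu(T_1)\in\hca_{q,n}$ as $t\to\infty$ by smoothness of that solution on $[0,T)$. In particular $\|\mu_{\kg}(\tau(t))\|$ and $\|\mu_{\pg}(\tau(t))\|$ stay bounded. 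By the scaling formulas in (\ref{rnsol}), $\mu^r_{\pg}(t)=c(t)\mu_{\pg}(\tau(t))$ and $\mu^r_{\kg}(t)=c^2(t)\mu_{\kg}(\tau(t))$, so $\mu^r(t)\to 0$ will follow at once from $c(t)\to 0$.

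Secondly, from $\tau(t)=\int_0^t c^2(s)\,ds\to T_1<\infty$ we obtain $\int_0^\infty c^2(s)\,ds<\infty$, hence $\liminf_{t\to\infty}c(t)=0$. The heart of the proof — and the step I expect to be the main obstacle — is to upgrade this to $\lim_{t\to\infty}c(t)=0$, i.e. to rule out that $c$ oscillates, returning to some fixed positive level along a sequence of times tending to infinity. The mechanism is that $c$ cannot move too fast: from (\ref{defctau}) one has $c'=rc$, so on any interval where $|r|$ is bounded, $c$ grows or decays at most at a fixed exponential rate, and therefore any excursion of $c$ from near $0$ up to a fixed level $\varepsilon>0$ occupies an interval of length bounded below, contributing at least a fixed amount to $\int c^2$; infinitely many such excursions would contradict $\int_0^\infty c^2<\infty$. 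Thus the real work is to bound $r(t)$ along the solution: here one uses that for a genuine normalization $r$ is a continuous function of the homogeneous space $(G_{\mu^r(t)}/K_{\mu^r(t)},g_{\mu^r(t)})$ (for instance $r=\tfrac1n\scalar(\mu^r)$, or the value making $\scalar$ constant), together with the fact — again via $\mu^r(t)=c(t)\cdot\mu(\tau(t))$ and $\mu(\tau(t))\to\mu(T_1)$ — that $\mu^r(t)$ remains in a compact subset of $\hca_{q,n}$ on any putative oscillating sequence, so that the corresponding values of $r$ are bounded. A second, more intrinsic route to $c(t)\to 0$ uses Proposition \ref{eqs}, (vi): along the unnormalized bracket flow $\scalar$ is strictly increasing, so $\scalar(\mu(\tau(t)))$ increases to the finite value $\scalar(\mu(T_1))$; combining this monotonicity with $\scalar(\mu^r(t))=c^2(t)\scalar(\mu(\tau(t)))$ (which follows from Theorem \ref{eqfl}, $g_{\mu^r(t)}$ being isometric to $\tfrac1{c^2(t)}g(\tau(t))$) and with the evolution of $\scalar$ along the normalized flow again pins down the asymptotics of $c$.

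Finally, once $c(t)\to 0$ is in hand, the explicit formulas (\ref{rnsol}) give $\mu^r_{\pg}(t)=c(t)\mu_{\pg}(\tau(t))\to 0$ and $\mu^r_{\kg}(t)=c^2(t)\mu_{\kg}(\tau(t))\to 0$, i.e. $\mu^r(t)\to 0$, recalling that along the normalized bracket flow only the $\pg\times\pg$-component actually evolves (see (\ref{muk}) and (\ref{admuz})). In summary, everything except the no-oscillation claim for $c$ is routine bookkeeping with the scaling relations (\ref{rnsol}) and the smoothness of the bracket flow on $[0,T)$, and the only genuinely delicate point is to leverage the nature of the normalization function $r$ to prevent $c$ from returning to a fixed positive level infinitely often.
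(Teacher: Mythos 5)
Your reduction is exactly the paper's: by (\ref{rnsol}) one has $\mu^r(t)=c(t)\cdot\mu(\tau(t))$, the factor $\mu(\tau(t))\to\mu(T_1)$ stays bounded because $T_1<T$ lies in the interior of the maximal interval, and everything comes down to $c(t)\to 0$. Where you diverge is that the paper's entire proof of this last point is the assertion ``$c^2(t)=\tau'(t)\to 0$, as $t\to\infty$,'' offered as an immediate consequence of $\tau(t)\to T_1<\infty$ — there is no oscillation analysis at all. Your instinct that this implication is not free is sound: a bounded increasing function need not have derivative tending to zero, and $\int_0^\infty c^2<\infty$ by itself only yields $\liminf c=0$, so some input beyond the hypotheses as literally stated (e.g.\ that $r$ is a continuous function of $\mu^r$, as in all of the paper's Examples \ref{norm-vol}--\ref{norm-Ric}, so that $|r|=|(\log c)'|$ is bounded while $\mu^r$ ranges over the compact set $\{c\cdot\lambda: \varepsilon/2\le c\le\varepsilon,\ \lambda\in\overline{\{\mu(\tau(t))\}}\}$) is genuinely needed to rule out spikes of $c$ returning to a fixed level; your excursion-length argument then closes the gap correctly. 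So your proposal is correct and in outline identical to the paper's, but you prove carefully a step the paper treats as self-evident; the cost is that your argument quietly strengthens the hypotheses by requiring $r$ to be a geometric normalization rather than an arbitrary integrable function of $t$ (for a purely time-dependent $r$ one can in fact manufacture an oscillating $c$ violating the conclusion), whereas the paper's one-line proof buys brevity at the price of leaving exactly this point unjustified. Your alternative route via the monotonicity of $\scalar$ from Proposition \ref{eqs}, (vi) is also reasonable, though note that for the constant-$\scalar$ normalization it shows the hypothesis $\tau(t)\to T_1<T$ can essentially never occur, so that route proves the lemma vacuously rather than substantively in that case.
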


\begin{proof}
Under these hypothesis, one has that $\mu(\tau(t))\to\mu(T_1)$ and $c^2(t)=\tau'(t)\to 0$, as $t\to\infty$, and thus $\mu^r(t)\to 0$ by (\ref{rnsol}).
\end{proof}

In particular, if $\mu^r(t)\to\lambda\ne 0$, as $t\to\infty$, then $\tau(t)\to T$ and $\lambda$ is therefore providing information on the behavior of $\mu(t)$ close to the singularity $T$.

For the corresponding one-parameter families of homogeneous manifolds:
\begin{equation}\label{3rmrn}
(M,g^r(t)), \qquad \left(G_{\mu_0}/K_{\mu_0},g_{\ip^r_t}\right), \qquad \left(G_{\mu^r(t)}/K_{\mu^r(t)},g_{\mu^r(t)}\right),
\end{equation}
one can prove the following result in much the same way as Theorem \ref{eqfl}, or alternatively, by just defining $h^r$ as in part (v) below, which make all the statements easy to check.

\begin{theorem}\label{eqflrn}
There exist time-dependent diffeomorphisms $\vp^r(t):M\longrightarrow G_{\mu^r(t)}/K_{\mu^r(t)}$ such that
$$
g^r(t)=\vp^r(t)^*g_{\mu^r(t)}, \qquad\forall t\in (a,b).
$$
Moreover, if we identify $M=G_{\mu_0}/K_{\mu_0}$, then $\vp^r(t):G_{\mu_0}/K_{\mu_0}\longrightarrow G_{\mu^r(t)}/K_{\mu^r(t)}$ can be chosen as the equivariant diffeomorphism determined by a Lie group isomorphism between $G_{\mu_0}$ and $G_{\mu(t)}$ with derivative $\tilde{h^r}:=\left[\begin{smallmatrix} I&0\\ 0&h^r \end{smallmatrix}\right]:\ggo\longrightarrow\ggo$, where $h^r(t):=d\vp(t)|_{o_{\mu_0}}:\pg\longrightarrow\pg$ is the solution to any of the following ODE systems:

\begin{itemize}
\item[(i)] $\ddt h^r=-h^r(\Ricci(\ip^r_t)+r(t)I)$, $\quad h^r(0)=I$.

\item[(ii)] $\ddt h^r=-(\Ricci_{\mu^r(t)}+r(t)I)h^r$, $\quad h^r(0)=I$.
\end{itemize}
The following conditions also hold:
\begin{itemize}
\item[(iii)] $\ip^r_t=\la h^r\cdot,h^r\cdot\ra$.

\item[(iv)] $\mu^r(t)=\tilde{h^r}\mu_0(\tilde{h^r}^{-1}\cdot,\tilde{h^r}^{-1}\cdot)$.

\item[(v)] $h^r(t):= \frac{1}{c(t)} h(\tau(t))$, where $h(t)$ is defined as in {\rm Theorem \ref{eqfl}} and $c$ and $\tau$ are given by {\rm (\ref{defctau})}.
\end{itemize}
\end{theorem}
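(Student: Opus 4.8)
The plan is to deduce Theorem~\ref{eqflrn} from the unnormalized case (Theorem~\ref{eqfl}) via the reparametrization recorded in~(\ref{rnsol}), rather than rerunning the existence/uniqueness argument. So I would \emph{start} from the curve $h(t)$ furnished by Theorem~\ref{eqfl} for the flows beginning at $\mu_0$, take the functions $c$ and $\tau$ attached to the normalization $r$ as in~(\ref{defctau}), and simply \emph{define} $h^r(t):=\tfrac{1}{c(t)}\,h(\tau(t))$, which is exactly statement~(v). Every other item should then follow by unwinding definitions.

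For~(iii): combining $\ip^r_t=\tfrac{1}{c^2(t)}\ip_{\tau(t)}$ from~(\ref{rnsol}) with $\ip_{\tau(t)}=\la h(\tau(t))\cdot,h(\tau(t))\cdot\ra$ from Theorem~\ref{eqfl}(iii) gives $\ip^r_t=\la h^r(t)\cdot,h^r(t)\cdot\ra$ at once. For~(iv): by~(\ref{rnsol}), $\mu^r(t)=c(t)\cdot\mu(\tau(t))$, and by Theorem~\ref{eqfl}(iv), $\mu(\tau(t))=\tilde h(\tau(t))\cdot\mu_0$; the key observation is that, with the description of the geometric rescaling in Section~\ref{varhs} as $c\cdot\mu=\left[\begin{smallmatrix} I&0\\ 0&\frac1c I\end{smallmatrix}\right]\cdot\mu$ under the action~(\ref{action}) (compatible with~(\ref{scmu})), one has $\left[\begin{smallmatrix} I&0\\ 0&\frac{1}{c(t)}I\end{smallmatrix}\right]\tilde h(\tau(t))=\tilde{h^r}(t)$, so that $\mu^r(t)=\tilde{h^r}(t)\cdot\mu_0$, which is~(iv). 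In particular $\tilde{h^r}(t)\colon(\ggo,\mu_0)\to(\ggo,\mu^r(t))$ is a Lie algebra isomorphism of the block form preserving $\kg$, hence (using that $G_{\mu_0}$ is simply connected) integrates to a Lie group isomorphism descending to the equivariant diffeomorphism $\vp^r(t)$ with $d\vp^r(t)|_{o_{\mu_0}}=h^r(t)$. Since $h^r(t)$ satisfies~(\ref{adkh}) relative to $\mu_0$ — inherited from $h(\tau(t))$, equivalently because $\ip^r_t$ is $\Ad(K_{\mu_0})$-invariant — Proposition~\ref{const}(ii) then gives that $\vp^r(t)$ is an isometry from $(G_{\mu_0}/K_{\mu_0},g_{\ip^r_t})$ onto $(G_{\mu^r(t)}/K_{\mu^r(t)},g_{\mu^r(t)})$; identifying $g^r(t)=g_{\ip^r_t}$ on $M$, this is precisely the assertion $g^r(t)=\vp^r(t)^*g_{\mu^r(t)}$.

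It remains to check the ODEs~(i) and~(ii) for $h^r$. Here I would differentiate $h^r(t)=\tfrac{1}{c(t)}h(\tau(t))$ using $c'=rc$ and $\tau'=c^2$ together with the two ODEs for $h$ from Theorem~\ref{eqfl}, and the elementary fact that the Ricci \emph{operator} scales inversely to a constant rescaling of the metric, so that $\Ricci(\ip^r_t)=c^2(t)\,\Ricci(\ip_{\tau(t)})$ and, likewise, $\Ricci_{\mu^r(t)}=c^2(t)\,\Ricci_{\mu(\tau(t))}$ (the latter also reading off from $\Ricci_{\tilde h\cdot\mu_0}=h\,\Ricci(\ip_t)\,h^{-1}$, as in the proof of Theorem~\ref{eqfl}). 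A one-line computation then collapses $\ddt h^r$ to the right-hand sides of~(i) and~(ii), respectively. As an alternative, one could bypass Theorem~\ref{eqfl} and argue exactly as in its proof: start from the ODE in~(i), set $\ip^r_t:=\la h^r\cdot,h^r\cdot\ra$, verify that it solves the normalized equation~(\ref{RFiprn}), and then conclude the remaining parts by uniqueness and Proposition~\ref{const}.

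I do not expect a genuine obstacle: the content is bookkeeping. The one point to handle with care is the scaling conventions — that the Ricci operator scales by $c^2$ while the metric scales by $c^{-2}$, and that composing the $\Gl_{q+n}(\RR)$-action correctly identifies the geometric rescaling $c(t)\cdot$ on $\hca_{q,n}$ with left multiplication of $\tilde h(\tau(t))$ by $\left[\begin{smallmatrix} I&0\\ 0&\frac{1}{c(t)}I\end{smallmatrix}\right]$. One should also note, as in the discussion around~(\ref{rnsol}) and Lemma~\ref{maxtime}, that the normalized flows are defined exactly on the interval where $\tau$ takes values in the maximal existence interval of the unnormalized flow, so the diffeomorphisms $\vp^r(t)$ are available for all $t$ in the stated range.
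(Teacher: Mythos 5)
Your proposal is correct and follows exactly the second of the two routes the paper itself indicates: the paper's entire ``proof'' is the remark that one can either repeat the argument of Theorem~\ref{eqfl} or simply define $h^r$ by part (v) and check all the statements, and you carry out the latter, with the scaling bookkeeping ($\Ricci$ scaling by $c^2$, the identification of $c\cdot\mu$ with the action of $\left[\begin{smallmatrix} I&0\\ 0&\frac1c I\end{smallmatrix}\right]$) done correctly.
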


It is worth mentioning at this point that by Theorem \ref{convmu}, any convergence $\mu^r(t)\to\lambda\in\hca_{q,n}$, as $t\to\infty$, we may get from some normalized flow of the form (\ref{BFrn}) gives rise to infinitesimal convergence $\left(G_{\mu^r(t)}/K_{\mu^r(t)},g_{\mu^r(t)}\right)\to\left(G_{\lambda}/K_{\lambda},g_{\lambda}\right)$, as well as to local convergence (and hence pointed subconvergence) provided the Lie injectivity radius remains uniformly bounded from below, i.e. $\inf\limits_{t\in[0,\infty)} r_{\mu(t)}>0$.  Recall also that $g^r(t)$ and $g_{\mu^r(t)}$ are isometric for all $t$, and so any geometric quantity constant or bounded in time for the normalized bracket flow $\mu^r(t)$ will also be so for the normalized Ricci flow $g^r(t)$.

We now give the evolutions of the quantities associated to the Ricci curvature along an $r$-normalized bracket flow.

\begin{proposition}\label{eqsrn}
If we replace $\mu$ by $\mu^r$ in every formula given in {\rm (\ref{not})}, then the $r$-normalized bracket flow equation {\rm (\ref{BFrn})} for $\mu^r(t)$ produces the following evolution equations:
\begin{itemize}
\item[(i)] \begin{align*} \ddt\Ricci=&-\unm\Delta(\Ricci)-\unm(B\Ricci+\Ricci B) \\ &- 2S(\ad_{\mu_{\pg}^r}{\Ricci(H)}) - S([\ad_{\mu_{\pg}^r}{H},\Ricci]) + 2r\Ricci.\end{align*}
\item[ ]
\item[(ii)] $\ddt\mm=-\unm\Delta(\Ricci) + 2r\mm$.
\item[ ]
\item[(iii)] $\ddt B=B\Ricci+\Ricci B + 2rB$.
\item[ ]
\item[(iv)] $\ddt H=\Ricci(H) + rH$.
\item[ ]
\item[(v)] $\ddt U= 2S(\ad_{\mu_{\pg}^r}{\Ricci(H)}) + S([\ad_{\mu_{\pg}^r}{H},\Ricci]) + 2rS(\ad_{\mu_{\pg}^r}{H})$.
\item[ ]
\item[(vi)] $\ddt\scalar=2\tr{\Ricci^2} + 2rR = 2\|\Ricci\|^2 + 2rR$.
\item[ ]
\item[(vii)] $\ddt\|\mu_{\pg}^r\|^2=-8\tr{\Ricci\mm} + 2r\|\mu_{\pg}^r\|^2$.
\item[ ]
\item[(viii)] $\ddt\tr{B}=2\tr{\Ricci B} + 2r\tr{B}$.
\item[ ]
\item[(ix)] $\ddt\| H\|^2=-2\tr{S(\ad_{\mu_{\pg}^r}{H})^2} + 2r\| H\|^2$.
\end{itemize}
\end{proposition}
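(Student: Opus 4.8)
The plan is to run the proof of Proposition \ref{eqs} essentially verbatim, replacing $\Ricci_\mu$ by $\Ricci_\mu+rI$ wherever it enters the right-hand side of the bracket flow, and then to keep track of the extra contributions produced by the term $rI$. Since $\pi$ is linear, the velocity of the $r$-normalized bracket flow (\ref{BFrn}) is $\ddt\mu^r=-\pi\!\left(\left[\begin{smallmatrix} 0&0\\ 0&\Ricci_{\mu^r}\end{smallmatrix}\right]\right)\mu^r - r\,\pi\!\left(\left[\begin{smallmatrix} 0&0\\ 0&I\end{smallmatrix}\right]\right)\mu^r$, so every right-hand side differs from the unnormalized one only by the extra velocity $V_r:=-r\,\pi\!\left(\left[\begin{smallmatrix} 0&0\\ 0&I\end{smallmatrix}\right]\right)\mu^r$, whose $\pg\times\pg$-component equals $2r\mu^r_{\kg}+r\mu^r_{\pg}$, exactly as recorded in (\ref{BFrnsis}); in particular $\mu^r|_{\kg\times\ggo}$ stays constant in $t$. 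Because each quantity in (\ref{not}) is obtained from $\mu^r$ by applying one of the smooth maps $\mm$, $B$, $H$ and then linear operations ($S$, $\ad$, $\tr$, $\|\cdot\|^2$), the chain rule appends to each formula of Proposition \ref{eqs} a single extra summand, namely the image of $V_r$ under the relevant differential, with these differentials being the ones recorded in (\ref{dR}), (\ref{dB}) and (\ref{dH}).

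First I would treat $\mm$, $B$ and $H$. For $\mm$ one uses (\ref{dR}) with $A=rI$ together with $\delta_{\mu^r_{\pg}}^t(\mu^r_{\pg})=-4\mm$ from Lemma \ref{prop}(i) (equivalently, $\mm$ is quadratic in $\mu_{\pg}$, so Euler's identity gives $d\mm|_{\mu^r_{\pg}}(r\mu^r_{\pg})=2r\mm$), yielding the extra term $2r\mm$ in (ii). For $B$, formula (\ref{dB}) with $A=rI$ gives the extra term $(rI)^tB+B(rI)=2rB$ in (iii) (recall $B$ depends on all of $\mu^r$, but only the $\pg\times\pg$ part of $\mu^r$ moves). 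For $H$, formula (\ref{dH}) with $A=rI$ gives the extra term $rH$ in (iv). Parts (vi)--(ix) then follow as in Proposition \ref{eqs} by taking traces or differentiating norms: (vi) is $\tr(\ddt\Ricci)$ and picks up $\tr(2r\Ricci)=2rR$; (vii) picks up $2\la\mu^r_{\pg},r\mu^r_{\pg}\ra=2r\|\mu^r_{\pg}\|^2$ (using (\ref{Rmm}) for the unnormalized part); (viii) picks up $\tr(2rB)$; and (ix) picks up $2\la H,rH\ra=2r\|H\|^2$.

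The only step needing genuine care is the term $U=S(\ad_{\mu^r_{\pg}}H)$, hence also $\Ricci$ and parts (i) and (v). I would recompute $\ddt\ad_{\mu^r_{\pg}}H$ exactly as in the proof of Proposition \ref{eqs}(v), now with $\ddt\mu^r_{\pg}=-\pi_n(\Ricci+rI)\mu^r_{\pg}$ and $\ddt H=\Ricci(H)+rH$. Expanding the first via the identity $\ad_{-\pi_n(E)\lambda}X=-E\circ\ad_\lambda X+\ad_\lambda(EX)+\ad_\lambda X\circ E$ used there, the contributions of $E=rI$ do not all cancel: the two outer terms cancel, but the middle one, $\ad_{\mu^r_{\pg}}(rH)=r\ad_{\mu^r_{\pg}}H$, survives; combined with the $r$-term from $\ad_{\mu^r_{\pg}}(\ddt H)=\ad_{\mu^r_{\pg}}\Ricci(H)+r\ad_{\mu^r_{\pg}}H$ this gives a total extra $2r\ad_{\mu^r_{\pg}}H$, so applying $S$ yields the extra term $2rU$ in (v). Finally, since $\Ricci=\mm-\unm B-U$ by (\ref{ricci2}), adding the three extra summands $2r\mm$, $-\unm(2rB)$ and $-2rU$ gives precisely $2r\Ricci$, which is (i) (and reconfirms (vi)). I do not expect any obstacle beyond this bookkeeping; as an independent check one can instead deduce all of (i)--(ix) from the scaling relation $\mu^r(t)=c(t)\cdot\mu(\tau(t))$ of (\ref{rnsol}), using $c'=rc$, $\tau'=c^2$ together with the homogeneities $\Ricci_{c\cdot\mu}=c^2\Ricci_\mu$, $\mm_{c\mu_{\pg}}=c^2\mm_{\mu_{\pg}}$, $B_{c\cdot\mu}=c^2B_\mu$ (Lemma \ref{prop}(iv) with $h=\tfrac{1}{c}I$) and $H_{c\mu_{\pg}}=cH_{\mu_{\pg}}$ (Lemma \ref{prop}(v)), and then invoking Proposition \ref{eqs}.
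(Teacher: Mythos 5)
Your proposal is correct and follows essentially the same route as the paper, which simply observes that one can replace $\Ricci$ by $\Ricci+rI$ throughout the proof of Proposition \ref{eqs} and track the extra terms; your item-by-item bookkeeping via the differentials (\ref{dR}), (\ref{dB}), (\ref{dH}) applied to the extra velocity, and the careful treatment of $U$, just makes explicit what the paper leaves to the reader. Obtaining (vi) as the trace of (i) also neatly handles the one point the paper flags as needing care (that in its proof of (vi) only the left-hand $\Ricci$ in the inner products gets replaced).
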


\begin{proof}
We can replace $\Ricci$ by $\Ricci+rI$ everywhere in the proof of Proposition \ref{eqs}, and obtain a proof for each item in exactly the same way.  One only need to be careful in the proof of part (vi), where only the $\Ricci$ on the left must be changed.
\end{proof}

\begin{example}\label{norm-vol}
In order to get the volume element of $g_{\mu^r}$ constant in time (i.e. $\det{h^r}\equiv 1$) one can take $r(t)=-\frac{1}{n}\scalar(\mu^r)$.  Indeed, by using Theorem \ref{eqflrn}, (ii), we obtain
$$
\ddt\det{h^r} = \det{h^r}\tr{\left((h^r)^{-1}\ddt h^r\right)} = -\det{h^r}\tr{(\Ricci_{\mu^r}+rI)}=0,
$$
and so $\det{h^r}\equiv 1$ as $h^r(0)=I$.  A useful property of this normalization is that the scalar curvature is still increasing. Indeed, by Theorem \ref{eqsrn}, (vi), we have that
$$
\ddt\scalar(\mu^r)= 2\left(\tr{\Ricci_{\mu^r}^2} - \frac{1}{n}R(\mu_r)^2\right) \geq 0,
$$
and equality holds if and only if $\mu_0$ is Einstein, in which case $\mu^r(t)\equiv\mu_0$, $t\in(-\infty,\infty)$.
\end{example}

\begin{example}\label{norm-R}
In the homogeneous case, the scalar curvature is just a single number attached to the metric, providing a natural curvature quantity to have fixed along the flow.  It follows from Theorem \ref{eqsrn}, (vi) that by normalizing with $r(t)=-\frac{\tr{\Ricci_{\mu^r}^2}}{\scalar(\mu^r)}$, we get $\scalar(\mu^r)\equiv\scalar(\mu_0)$  provided $\scalar(\mu_0)\ne 0$.
\end{example}

\begin{example}\label{norm-norma}
If the normalization keeps the norm of the bracket uniformly bounded, then two useful consequences follow: the bracket flow is defined for $t\in[0,\infty)$ and there exist convergence subsequences.  For instance, we have that
$$
r(t)=4\frac{\tr{\Ricci_{\mu^r}M_{\mu_{\pg}^r}}}{\|\mu_{\pg}^r\|^2}
$$
gives $\|\mu_{\pg}^r\|\equiv \|\mu_{\pg}(0)\|$ (see Theorem \ref{eqsrn}, (vii)).
\end{example}

\begin{example}\label{norm-Ric}
In order to prevent convergence to a flat metric without fixing a sign for the scalar curvature as in Example \ref{norm-R}, which also leaves out the metrics with $R=0$, we can use that a homogeneous manifold is flat if and only if it is Ricci flat (see \cite{AlkKml}), and consider the normalized bracket flow such that  $\tr{\Ricci_{\mu^r}^2}\equiv \tr{\Ricci_{\mu_0}^2}$.  Thus the solution stays uniformly far from being flat for all $t$, as soon as $g_0$ is nonflat, and any eventual limit will therefore be automatically nonflat.  It is difficult in this case to compute $r(t)$ explicitly, although we know that the normalized solution will have the form
$$
\mu^r(t)=\left(\frac{\tr{\Ricci_{\mu_0}^2}}{\tr{\Ricci_{\mu(\tau(t))}^2}}\right)^{1/4}\cdot\mu(\tau(t)),
$$
for some appropriate time reparametrization $\tau(t)$, where $\mu(s)$ is the bracket flow solution with $\mu(0)=\mu_0$.
\end{example}

\subsection{Example in dimension $3$}\label{exdim3}
We consider more in detail in this section the family of metrics given in Example \ref{ex1-3BF} with $c=0$.  Thus their brackets $\mu=\mu_{a,b,0}\in\hca_{1,3}$ are defined by
$$
\mu(X_3,Z_1)=X_2, \qquad \mu(Z_1,X_2)=X_3, \qquad \mu(X_2,X_3)=aX_1+bZ_1,
$$
each homogeneous space $(G_\mu/K_\mu,g_\mu)$ has Ricci operator and scalar curvature (see Example \ref{ex1-3ricci}) given by
$$
\Ricci_\mu=\left[\begin{smallmatrix} \unm a^2&&\\ &-\unm a^2+b& \\ &&-\unm a^2+b\end{smallmatrix}\right], \quad
R(\mu)=-\unm a^2+2b,
$$
and the bracket flow is equivalent to the ODE system
\begin{equation}\label{odedim3}
\left\{\begin{array}{l}
\ddt a=(-\tfrac{3}{2}a^2+2b)a, \\ \\
\ddt b=(-a^2+2b)b.
\end{array}\right.
\end{equation}

\begin{figure}[htbp]
\centering
\includegraphics[scale=1]{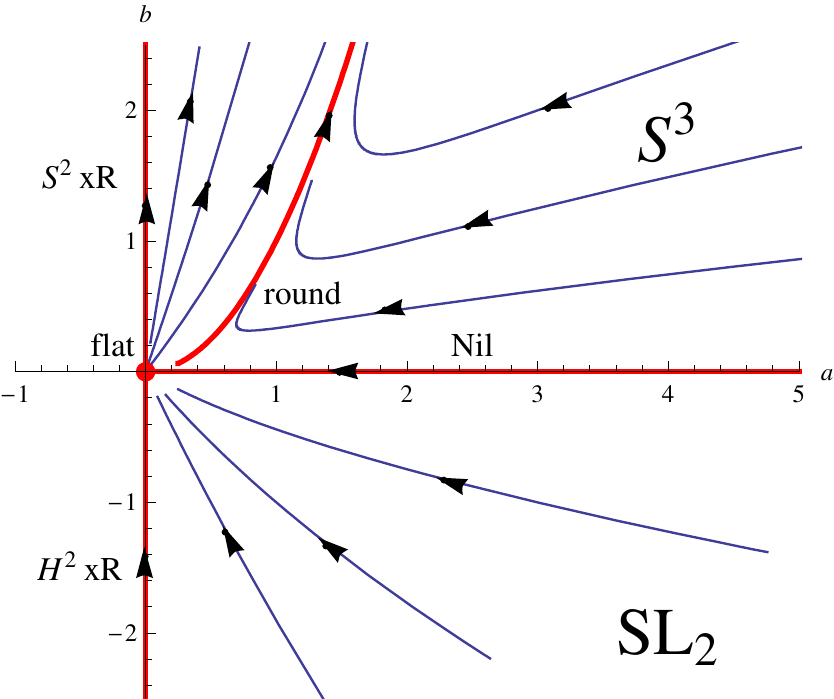}
\caption{Phase plane for the ODE system (\ref{odedim3})}\label{phpdim3}
\end{figure}

The phase plane for this system is displayed in Figure \ref{phpdim3}, as computed in Mathematica.  Up to isometry, it is enough to assume $0\leq a$, which we do from now on, and the metrics we get can be divided in bracket flow invariant subsets as follows:
\begin{itemize}
\item $a=0$, $b>0$: product metrics on $S^2\times\RR$.

\item $a=0$, $b=0$: euclidean metric on $\RR^3$.

\item $a=0$, $b<0$: product metrics on $H^2\times\RR$.

\item $a>0$, $b>0$: left-invariant metrics on $S^3$ (Berger spheres).

\item $a>0$, $b=0$: left-invariant metrics on the Heisenberg group ($Nil$).

\item $a>0$, $b<0$: left-invariant metrics on $\widetilde{\Sl_2}(\RR)$.
\end{itemize}

It is easy to see that for $b>0$ the bracket flow solution $\mu(t)$ goes to infinity in finite time, and that the parabola $b=a^2$ is invariant, which correspond to the round metrics on $S^3$ as their Ricci operator equals $\unm a^2I$.  When $b\leq 0$, one also easily obtain that $\mu(t)|_{\pg\times\pg}\to 0$ (i.e. $(a,b)\to(0,0)$), as $t\to\infty$, which implies that $g_{\mu(t)}$ locally converges and subconverges in the pointed sense to a flat metric by Theorem \ref{convmu2} and Example \ref{lieinj}.

The region $b\geq a^2$ is invariant by the bracket flow, and for the backward flow we have that
$$
\ddt (a^2+b^2)=2(\frac{3}{2}a^2-2b)a^2+2(a^2-2b)b^2\leq -a^4-2a^2b^2\leq 0.
$$
Thus all the solutions inside this region are defined for $t\in (-\infty,0]$, and so they produce ancient solutions to the Ricci flow.  These solutions may be considered the $3$-dimensional analogous of the Angenent ovals for the curve shortening flow and the Rosenau solutions for the Ricci flow on surfaces (see \cite[Chapter 2,Section 3.3]{ChwKnp}).

Let us now consider different normalizations $\mu^r=\mu_{a,b,0}$ starting at $\mu_0=\mu_{a_0,b_0,0}$.

\no
(i) {\it Volume element} (see Example \ref{norm-vol}).  An easy computation gives $r=\tfrac{1}{6}a^2-\tfrac{2}{3}b$, and thus the volume-normalized bracket flow equation is given by
$$
\left\{\begin{array}{l}
\ddt a=\tfrac{4}{3}(b-a^2)a, \\ \\
\ddt b=\tfrac{2}{3}(b-a^2)b.
\end{array}\right.
$$
It is easy to check that $\ddt \frac{a}{b^2}\equiv 0$, which gives $a=\alpha b^2$ for any $b\ne 0$ if we assume $a_0=\alpha b_0^2$, for some $\alpha>0$.  This implies that
$$
\ddt b = \tfrac{2}{3}b^2(1-\alpha^2b^3),
$$
from it can be deduced that the only fixed points are $(a,b)=(0,0)$ (flat) and $(a,b)=(\alpha^{-\frac{1}{3}},\alpha^{-\frac{2}{3}})$ (round metric on $S^3$), and that $b$ decreases for $1<b$ and increases otherwise.  An alternative way to get this is by using that $R$ must always increase forward in time for any volume-normalized solution, and we have that $R(b)=-\unm\alpha^2b^4+2b$ (see Figure \ref{voldim3} for the case $\alpha=1$).  By Theorem \ref{convmu2}, (iv) and Example \ref{lieinj}, we have that the volume-normalized Ricci flow solutions on $S^3$ (i.e. $b>0$) all converge in the pointed sense to a round metric (i.e. $b=a^2$).  On the other hand, on $\widetilde{\Sl_2}(\RR)$ (i.e. $b<0$), the volume-normalized solutions still locally converge (or subconverge in the pointed sense) to a flat metric.

\begin{figure}[htbp]
\centering
\includegraphics[scale=0.65]{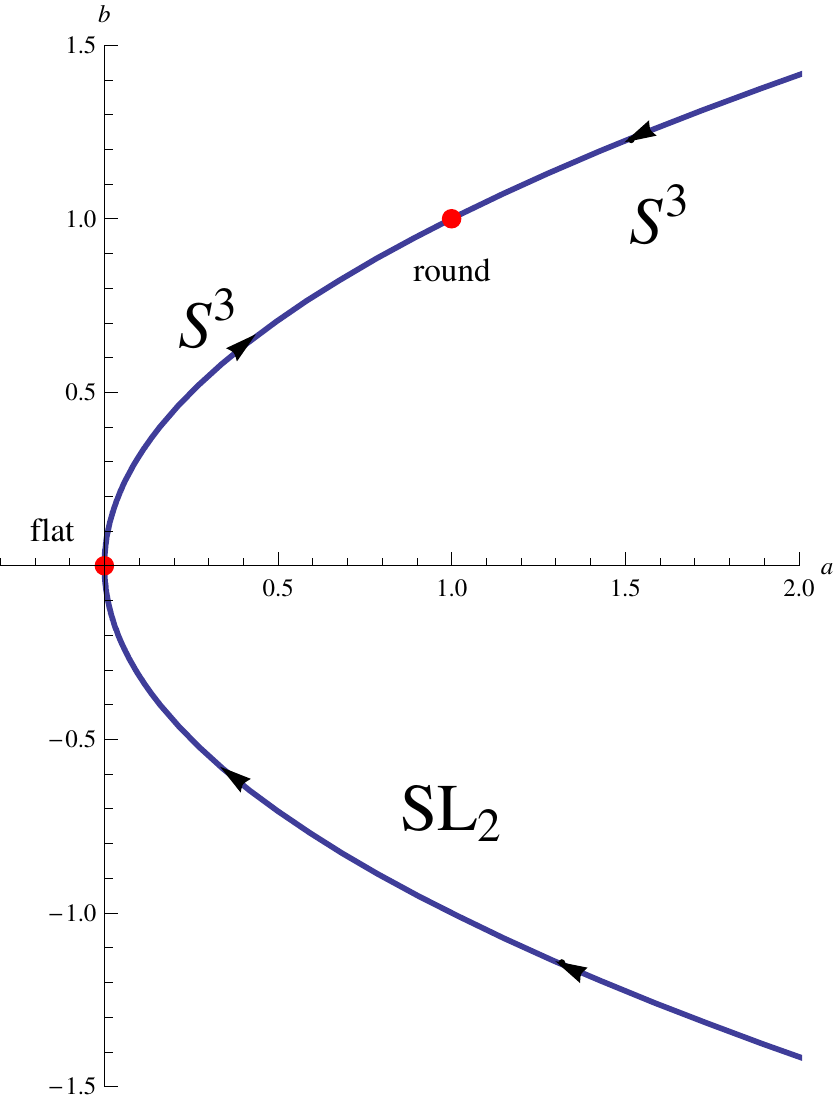}
\hspace{1cm}
\includegraphics[scale=0.8]{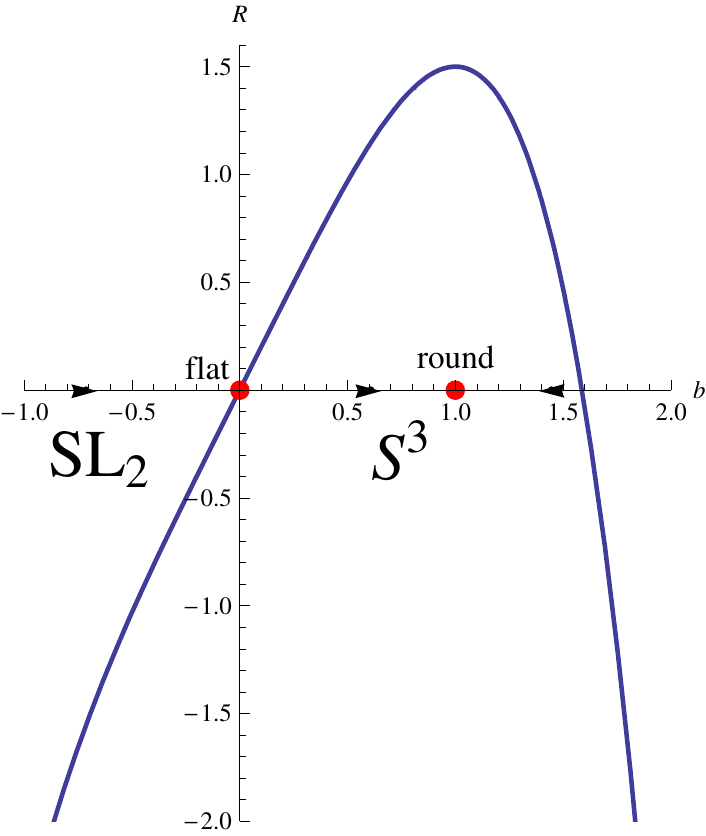}
\caption{Volume-normalized bracket flow and scalar curvature behavior}\label{voldim3}
\end{figure}

\no
(iii) {\it Positive scalar curvature} (see Example \ref{norm-R}).  By taking $r=-\tfrac{1}{2}a^4-\tfrac{4}{3}b^2+\tfrac{4}{3}a^2b$ we obtain
$R(\mu^r)=-\unm a^2+2b\equiv \frac{3}{2}$, from which follows that
$$
\ddt a=\unm(-\tfrac{1}{4}a^4-\unm a^2+\tfrac{3}{4})a.
$$
Thus $(a,b)=(0,\frac{3}{4})$ ($S^2\times\RR$) and $(a,b)=(1,1)$ (round metric on $S^3$) are the only fixed points and $a$ is increasing for $0<a<1$ and decreasing for $1<a<\infty$.  Forward in time, this gives the same convergence behavior as for the volume-normalized flow above (see Figure \ref{Rdim3}), but it produces $S^2\times\RR$ as a limit a $t\to\-infty$.  It follows that the ancient solutions (i.e. $b\geq a^2$) `connect' $S^2\times\RR$ with $S^3$ in the sense considered in \cite{BksKngNi}.

\begin{figure}[htbp]
\centering
\includegraphics[scale=0.83]{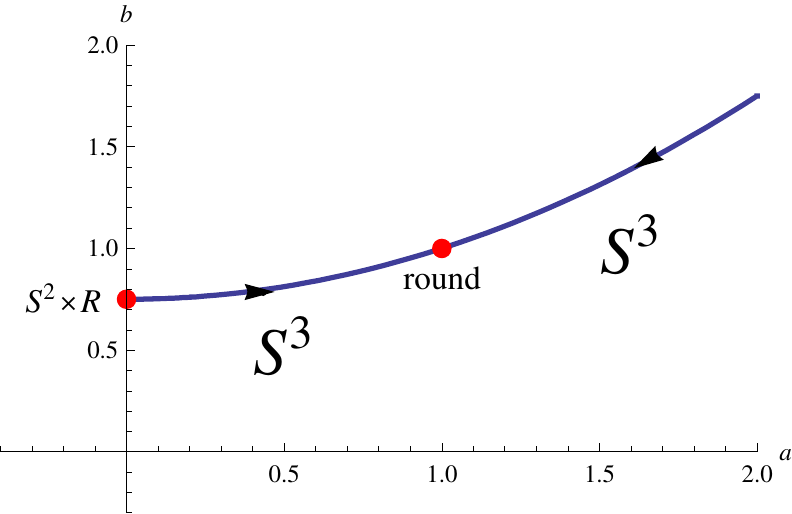}
\hspace{1cm}
\includegraphics[scale=0.85]{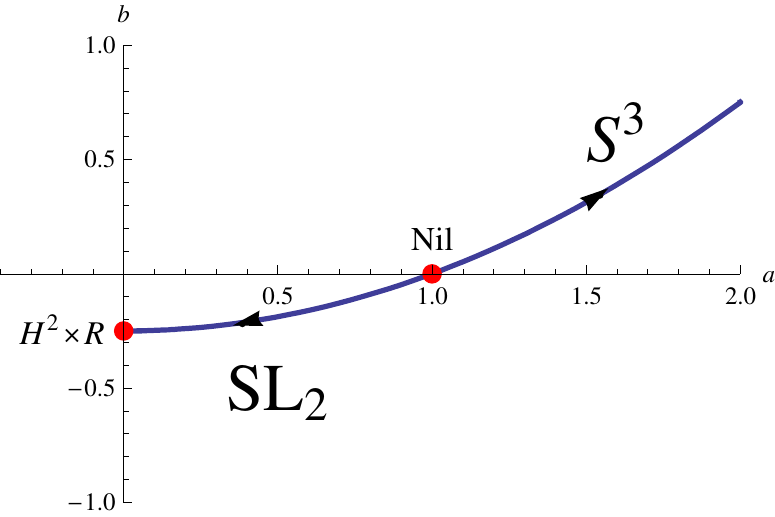}
\caption{$R$-normalized bracket flows: $R\equiv \frac{3}{2}$ and $R\equiv-\unm$.}\label{Rdim3}
\end{figure}

\no
(iv) {\it Negative scalar curvature} (see Example \ref{norm-R}).  In order to get $R(\mu^r)=-\unm a^2+2b\equiv -\frac{1}{2}$ we must consider
$r=(\tfrac{3}{2}a^4+4b^2-4a^2b)$.  It follows that
$$
\ddt a=\unm(\tfrac{1}{2}a^4-\tfrac{1}{4}a^2-\tfrac{1}{4})a,
$$
and so the fixed points are $(a,b)=(0,-\frac{1}{4})$ ($H^2\times\RR$) and $(a,b)=(1,0)$ ($Nil$).  Moreover, we obtain that $a$ is decreasing for $0<a<1$ and increasing for $1<a<\infty$.   This implies that on $S^3$ (i.e. $1<a$), the solution $\mu^r(t)$ goes to $\infty$, as $t\to\infty$.  However, the $R$-normalized solution on $\widetilde{\Sl_2}(\RR)$ (i.e. $a<1$) satisfies
$$
\lim_{t\to\infty} \mu^r(t) = \mu_{0,-\frac{1}{4},0},
$$
and so we get $H^2\times\RR$ in the limit (see Figure \ref{Rdim3}).  It follows from Theorems \ref{eqflrn} and \ref{convmu2} (and Example \ref{lieinj}) that the $R$-normalized ($R\equiv -\frac{1}{2}$) Ricci flow solution $g^r(t)$ of left-invariant metrics on $\widetilde{\Sl_2}(\RR)$ locally converges to $H^2\times\RR$ up to pullback by time-dependent diffeomorphisms, and also that there exists a subsequence $g^r(t_k)$ converging in the pointed sense to $H^2\times\RR$.  This convergence behavior was proved to hold in \cite[3.3.5]{Ltt} and \cite[4.3.1]{Glc} for certain rescalings of the Ricci flow solution starting at any left-invariant metric on $\widetilde{\Sl_2}(\RR)$.  Recall that $H^2\times\RR$ is not a unimodular Lie group, and thus this convergence can never appear in the study of the bracket flow on $\lca_3=\hca_{0,3}$ (since unimodularity is a closed condition on $\lca_3$).  However, we have just showed that it does on $\hca_{1,3}$ (compare with \cite[Section 6]{GlcPyn}).

\section{Example on compact Lie groups}\label{exss}

In this section, we study the Ricci flow on certain $2$-parameter family of left-invariant metrics on semisimple Lie groups.

Let $\ggo$ be a compact simple Lie algebra of dimension $n$, with Lie bracket denoted by $\lb$.  Endow $\ggo$ with the inner product $\ip$ given by minus the Killing form of $\ggo$, that is, $B(X,X)=-1$ for any $X\in\ggo$, $\| X\|=1$.  This implies that $(\ad{X})^t=-\ad{X}$ for any $X$, from which it follows that the moment map equals $M=-\unc I$, and so the Ricci operator of the bi-invariant metric defined by $\ip$ on the corresponding simply connected Lie group $G$ is given by $\Ricci=\unc I$.  Consider a Cartan decomposition
$\ggo=\hg\oplus\mg$, $\dim{\hg}=h$, $\dim{\mg}=m$ (i.e. $[\hg,\hg]\subset\hg$, $[\hg,\mg]\subset\mg$ and $[\mg,\mg]\subset\hg$), which is automatically orthogonal with respect to $\ip$, and let us make the following assumption: the Killing form $B_{\hg}$ is a multiple of $B$ restricted to $\hg$, say
\begin{equation}\label{killh}
B_{\hg}=\alpha B|_{\hg\times\hg}, \qquad 0\leq\alpha<1.
\end{equation}
It is easy to see that $\alpha=\frac{2h-m}{2h}$ (see \cite[Theorem 11,(i)]{DtrZll}).  This situation holds for instance for any irreducible symmetric space $G/H$ with $H$ simple or abelian, and so $G$ can be any compact simple Lie group in this section, with the only exception of $\Spe(2k+1)$ (see e.g. the list \cite[Table 7.102]{Bss}).

Let $\mu=\mu_{a,b}$ denote the Lie bracket on the vector space $\ggo$ defined by
$$
\mu|_{\hg\times\hg}=a\lb, \qquad \mu|_{\hg\times\mg}=a\lb, \qquad \mu|_{\mg\times\mg}=b\lb.
$$
We note that $(\ggo,\mu)$ is isomorphic to $\ggo$ for $a,b>0$, and to a noncompact simple Lie algebra $\ggo_{nc}$ for $b<0<a$, which is another real form of the complexification $\ggo_{\CC}$.  As $\mu_{a,b}$ is isometric to $\mu_{-a,-b}$, it is enough to assume $0\leq a$, and by using the Killing form below, it is easy to see that the left-invariant metrics we get can be arranged according to their underlying Lie group as follows (any item defines a bracket flow invariant subset):

\begin{itemize}
\item[(i)] $a=0$, $b\ne 0$: a $2$-step nilpotent Lie group $N$ with $h$-dimensional derived algebra equal to its center.

\item[(ii)] $a=0$, $b=0$: abelian group $\RR^n$ (euclidean metric).

\item[(iii)] $a>0$, $b>0$: compact simple Lie group $G$.

\item[(iv)] $a>0$, $b=0$: a semidirect product $H\ltimes\RR^m$ (these metrics are actually isometric to the product $H\times\RR^m$ of a bi-invariant metric on $H$ and the euclidean metric on $\RR^m$).

\item [(v)] $a>0$, $b<0$: noncompact simple Lie group $G_{nc}$ with Lie algebra $\ggo_{nc}$.
\end{itemize}

Let $(G_\mu,\ip)$ be the Lie group endowed with a left-invariant metric according to (\ref{hsmu}), corresponding to $\mu=\mu_{a,b}\in\lca_n=\hca_{0,n}$.  It follows from (\ref{vol}) and Proposition \ref{const}, (ii) that for $a,b>0$ (resp. $a>0$, $b<0$), $(G_\mu,\ip)$ is isometric to the left invariant metric on $G$ (resp. $G_{nc}$) defined by the inner product
$$
\ip_{a,b}:=\frac{1}{a^2}\ip|_{\hg\times\hg} + \frac{1}{a|b|}\ip|_{\mg\times\mg}.
$$
It is known that $(G_\mu,\ip)$ is a naturally reductive homogeneous manifold for any $a,b$, with respect to its presentation as a homogeneous space $G\times H/H$ (see \cite[Theorem 1]{DtrZll} for the semisimple cases (iii), (v), and \cite{manus} for the nilpotent case (i)).  Furthermore, it easily follows from \cite[Theorem 3]{DtrZll} that $(G_\mu,\ip)=G\times H/H$ is normal homogeneous if and only if $0\leq b\leq a$, and from \cite[Theorem 9]{DtrZll} that $(G_\mu,\ip)$ has nonnegative sectional curvature if and only if $0\leq b\leq \frac{4}{3}a$.

It is straightforward to see by using (\ref{killh}) that the Killing form and moment map of $(G_\mu,\ip)$ are respectively given by
$$
B_\mu=\left[\begin{smallmatrix} -a^2I&\\ &-abI \end{smallmatrix}\right], \qquad
M_\mu=\left[\begin{smallmatrix} -\frac{a^2}{2}I+\frac{a^2}{4}\alpha I+\frac{b^2}{4}(1-\alpha)I&\\ &(\unm ab-\unc b^2)I \end{smallmatrix}\right].
$$
The Ricci operator and scalar curvature are therefore given by
$$
\Ricci_\mu=\unc\left[\begin{smallmatrix} (\alpha a^2+(1-\alpha)b^2)I&\\ &(2ab-b^2)I \end{smallmatrix}\right],  \qquad R=\frac{2h-m}{8}a^2-\frac{m}{8}b^2+\frac{m}{2}ab,
$$
and an easy computation gives that
$$
\begin{array}{l}
\pi(\Ricci_\mu)\mu|_{\hg\times\ggo}=-\unc (\alpha a^2+(1-\alpha)b^2)a\, \lb, \\ \\
\pi(\Ricci_\mu)\mu|_{\mg\times\mg}=\unc (\alpha a^2+(3-\alpha)b^2-4ab)b\, \lb.
\end{array}
$$
This implies that the bracket flow is equivalent to the following ODE system for $a=a(t)$, $b=b(t)$:
\begin{equation}\label{bfexss}
\left\{\begin{array}{l}
\ddt a = \unc (\alpha a^2+(1-\alpha)b^2)a, \\ \\
\ddt b = -\unc (\alpha a^2+(3-\alpha)b^2-4ab)b.
\end{array}\right.
\end{equation}
The phase plane for this system is displayed in Figure \ref{phpss}, as computed in Mathematica, in the particular case $\ggo=\sug(3)$, $\hg=\sug(2)$ (where $h=3$, $m=5$ and $\alpha=\frac{1}{6}$).

\begin{figure}[htbp]
\centering
\includegraphics[scale=1]{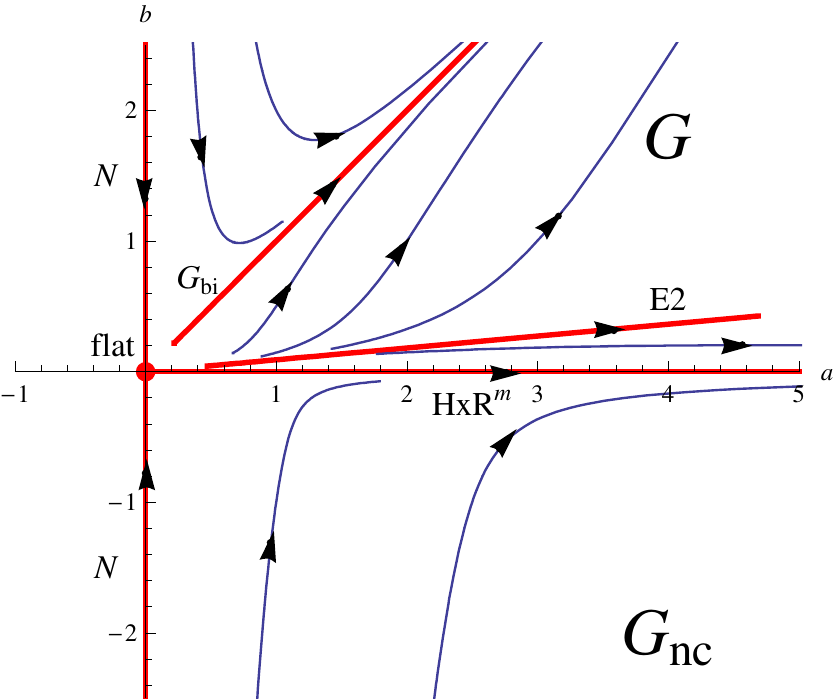}
\caption{Phase plane for the ODE system (\ref{bfexss})}\label{phpss}
\end{figure}

We have that $(G_\mu,\ip)$ is Einstein if and only if $\alpha a^2+(1-\alpha)b^2=2ab-b^2$, whose solutions are $b=a$ (i.e. bi-invariant metrics on $G$) and $b=\frac{\alpha}{2-\alpha}a$.  The other Ricci solitons are $H\times\RR^m$, as it is a product of an Einstein manifold and an euclidean space, and the nilsoliton $N$ (see \cite{soliton}), whose Ricci operator satisfies
$$
\Ricci_{\mu_{0,b}}= -\unc((1-\alpha)b^2+2b^2)I+\left[\begin{smallmatrix} \unm((1-\alpha)b^2+b^2)I & \\ &\unc((1-\alpha)b^2+b^2)I\end{smallmatrix}\right] \in\RR I\oplus\Der(\ggo,\mu_{0,b}).
$$
It follows that the region $0\leq b\leq a$ is invariant by the bracket flow, and since $\ddt a\geq 0$ on it the backward flow stays forever bounded.
Thus all the solutions inside this region are defined for $(-\infty,0]$, and so this gives explicit examples of left-invariant ancient solutions to the Ricci flow on any compact simple Lie group different from $\Spe(2k+1)$.  We actually get two non-equivalent (up to homothety) ancient solutions which are not Einstein if $\alpha\ne 0$: $\frac{\alpha}{2-\alpha}a<b<a$ and $0<b<\frac{\alpha}{2-\alpha}a$ (by using that the ratio of the two Ricci eigenvalues is respectively $>1$ and $<1$).  From the behavior of the $R$-normalized bracket flow obtained below (see Figure \ref{Rss}, left), we have that these ancient solutions flow the second Einstein metric $E2$ (i.e. $b=\frac{\alpha}{2-\alpha}a$) into the bi-invariant metric $G_{bi}$ (i.e. $b=a$) and into the Ricci soliton $H\times\RR^m$, respectively.  We have recently become aware that the same behavior has been discovered in \cite{BksKngNi} for certain ancient solutions on many homogeneous spaces including spheres and complex projective spaces, and that the construction given above is actually a particular case of the Riemannian submersion method given in \cite[Section 7]{BksKngNi}.

On the contrary, for $b>a>0$, it is easy to see that $b\to\infty$, as $t\to T$, for some finite negative time $-\infty<T<0$, and thus these are not ancient solutions.

We now consider some normalizations.

\no
(i) {\it Volume element} (see Example \ref{norm-vol}). By using that for all $a>0$, $b\ne 0$,
\begin{equation}\label{vol}
\mu_{a,b}=\left[\begin{smallmatrix} \frac{1}{a}I & \\ & \frac{1}{\sqrt{a|b|}}I\end{smallmatrix}\right]\cdot\mu_{1,\pm 1}, \qquad \pm 1:=\frac{b}{|b|},
\end{equation}
we obtain that $\vol(\mu_{a,b})=a^{-(h+m/2)}|b|^{-m/2}\vol(\mu_{1,\pm 1})$, and a region of constant volume element is therefore given by
$$
b=\pm a^{-\omega}, \qquad \omega:=\frac{m+2h}{m}>1.
$$
The scalar curvature can be written as
$$
R(a)=\unc\left(h\alpha a^2+h(1-\alpha)a^{-2\omega}\pm 2ma^{1-\omega}-ma^{-2\omega}\right),
$$
and thus
$$
R'(a)=\unm a^{-2\omega-1}p_{\pm}(a^{\omega+1}), \qquad\mbox{where}\quad p_{\pm}(x):=h\alpha x^2\pm(1-\omega)m x+\omega(m-h(1-\alpha)).
$$
It follows from \cite[Theorem 11,(i)]{DtrZll} that $\alpha\geq\frac{2h-m}{2h}>\frac{h-m}{h}$ (this also follows by using that $\|\mu_{a,b}\|^2=-4\tr{M_{\mu_{a,b}}^2}=h(2-\alpha)a^2+(m-(1-\alpha)h)b^2$), and so $p_{\pm}(0)>0$.  On the other hand, we know that $R'(a)$ and henceforth $p_{\pm}$ vanishes precisely on the $a$-coordinate of Einstein metrics (see Example \ref{norm-vol}).  This implies that in the compact case $b>0$, the zeroes of $p_+$ are $a=1,(\frac{2-\alpha}{\alpha})^{-\omega-1}$, and hence $p_+<0$ for $1<a<(\frac{2-\alpha}{\alpha})^{-\omega-1}$ and $p_+>0$ otherwise.  For the noncompact case $b<0$, one has that $p_-$ is always positive.  As $R$ must always increase, we obtain from the behavior of $p_\pm$ just described that $a$ decreases when  $1<a<(\frac{2-\alpha}{\alpha})^{-\omega-1}$ and increases otherwise if $b>0$, and that $a$ always increases if $b<0$ (see Figures \ref{volss} and \ref{volssR}).  By using Theorem \ref{convmu3}, (iv), we obtain pointed convergence to the bi-invariant metric $G_{bi}$ on $G$.  On the contrary, the other Einstein metric is unstable, and the solution on $G_{nc}$ diverges.

\begin{figure}[htbp]
\centering
\includegraphics[scale=1]{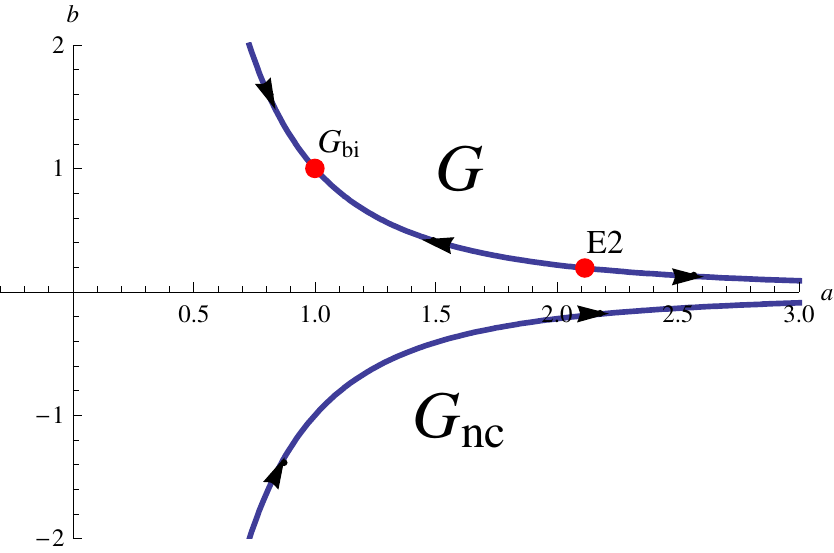}
\caption{Volume-normalized bracket flow}\label{volss}
\end{figure}

\begin{figure}[htbp]
\centering
\includegraphics[scale=0.75]{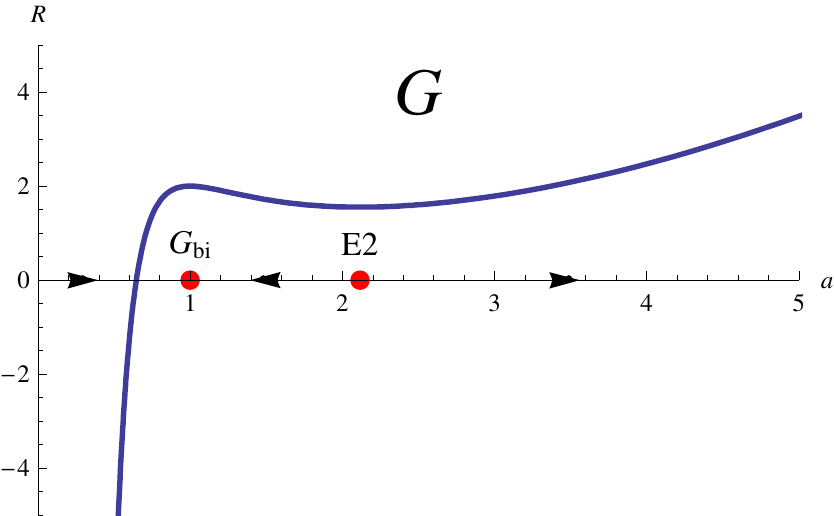}
\hspace{1cm}
\includegraphics[scale=0.75]{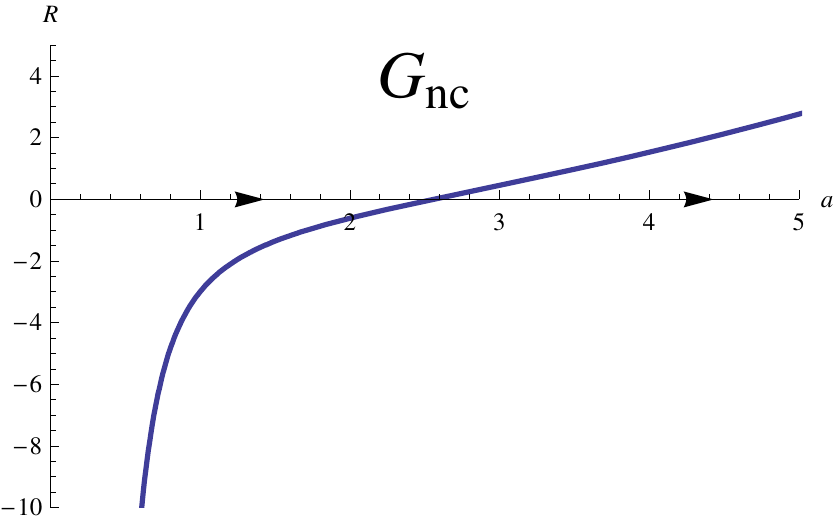}
\caption{Scalar curvature behavior}\label{volssR}
\end{figure}

\begin{figure}[htbp]
\centering
\includegraphics[scale=0.8]{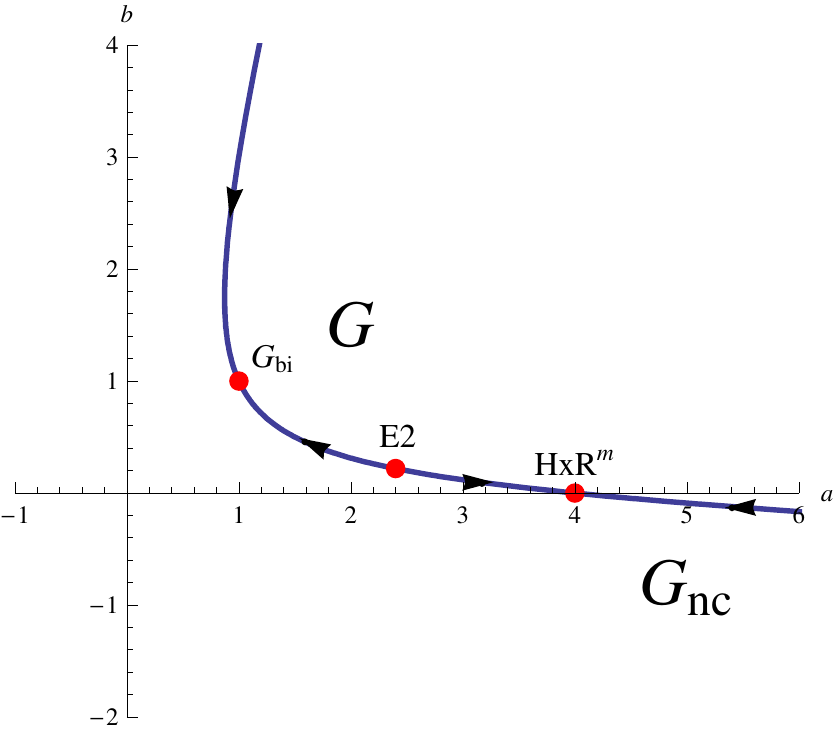}
\hspace{1cm}
\includegraphics[scale=0.76]{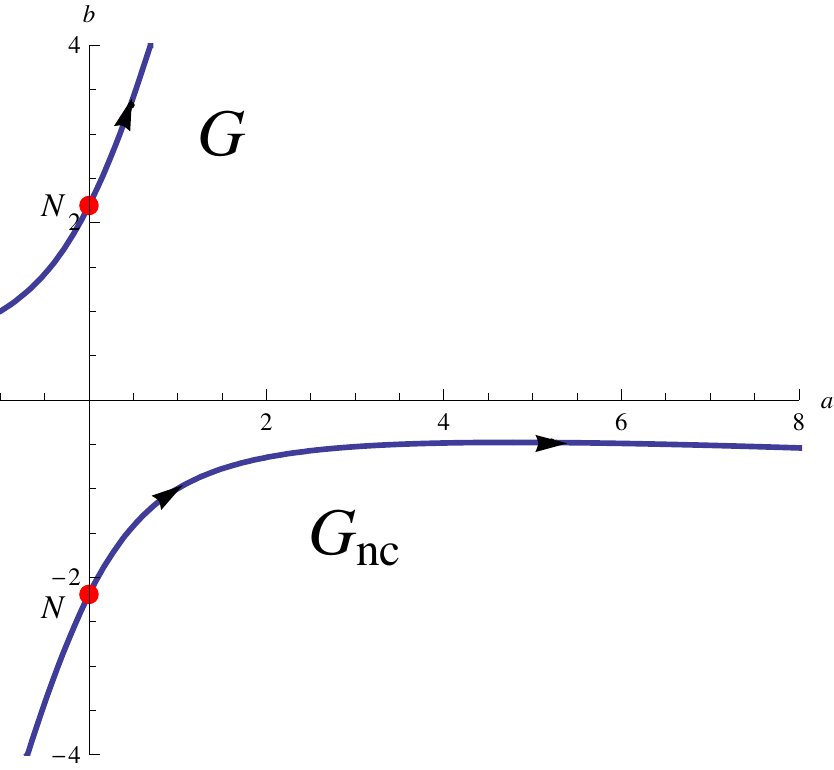}
\caption{$R$-normalized bracket flow: $R\equiv 2$ and $R\equiv -3$}\label{Rss}
\end{figure}

\no
(i) {\it Scalar curvature} (see Example \ref{norm-R}).  In order to get more convergence information, we consider the normalizations by scalar curvature:
$$
R=\unc\left((\alpha a^2+(1-\alpha)b^2)h+(2ab-b^2)m\right)\equiv R_0,
$$
which is obtained with the $r$-normalized bracket flow with
$$
r=-\frac{\tr{\Ricci^2}}{R} = -\frac{1}{16 R_0}\left((\alpha a^2+(1-\alpha)b^2)^2h+(2ab-b^2)^2m\right)
$$
In the positive case, say $R_0=2$, we replace in the equation
$$
\ddt b = -\unc (\alpha a^2+(3-\alpha)b^2-4ab)b +rb,
$$
the value of $a$ and $r$ by their expressions in terms of $b$, and deduce the behavior of the solution from the sign of $\ddt b$ (see Figure \ref{Rss}, left).  This also gives pointed convergence to $G_{bi}$, but in addition local convergence (or pointed subconvergence) to $H\times\RR^m$ of two solutions, one consisting of left-invariant metrics on $G$ and the other on $G_{nc}$.

For $R_0=-\frac{3}{2}$, we argue in the same way by writing
$$
\ddt a = \unc (\alpha a^2+(1-\alpha)b^2)a +ra,
$$
in terms of $a$ and using that $\mu_{a,b}$ is isometric to $\mu_{-a,-b}$ (see Figure \ref{Rss}, right).  This does not provide any convergence in forward time, it only shows that $N$ is unstable.

\end{document}